\crefname{enumi}{part}{parts}
\setlist[enumerate]{font=\normalfont, label=\upshape{(\arabic*)}}
\let\itemref\ref
\newcommand{\myitem}[1][]{\item[(#1)]\refstepcounter{enumi}\def\@currentlabel{\upshape{(#1)}}}
\theoremstyle{plain}
\newtheorem{theorem}{Theorem}[section]
\newtheorem{lemma}[theorem]{Lemma}
\newtheorem{proposition}[theorem]{Proposition}
\theoremstyle{definition}
\newtheorem{definition}[theorem]{Definition}
\newtheorem{example}[theorem]{Example}
\theoremstyle{remark}
\newtheorem{remark}[theorem]{Remark}
\newcommand{\QQ}{\mathbb{Q}}
\newcommand{\ZZ}{\mathbb{Z}}
\newcommand{\VV}{\mathbb{V}} % Abelian cone of a quasi-coherent sheaf
\newcommand{\LL}{\mathbb{L}} % Cotangent complex
\newcommand{\inj}{\hookrightarrow}
\newcommand{\surj}{\twoheadrightarrow}
\newcommand{\sI}{\mathcal{I}} % Ideal sheaf
\newcommand{\sJ}{\mathcal{J}} % Ideal sheaf
\newcommand{\sE}{\mathcal{E}} % Vector bundle (or coherent sheaf)
\newcommand{\sN}{\mathcal{N}} % (Co)normal bundle
\newcommand{\sO}{\mathcal{O}} % Structure sheaf
\newcommand{\sP}{\mathcal{P}} % Property P
\newcommand{\stG}{\mathscr{G}} % Residual gerbe
\newcommand{\stX}{\mathscr{X}} % Stack
\newcommand{\stY}{\mathscr{Y}} % Stack
\newcommand{\stZ}{\mathscr{Z}} % Stack
\newcommand{\stU}{\mathscr{U}} % Stack
\newcommand{\stV}{\mathscr{V}} % Stack
\newcommand{\stW}{\mathscr{W}} % Stack
\newcommand{\GL}{\mathrm{GL}} % General linear group
\DeclareMathOperator{\Spec}{Spec}
\DeclareMathOperator{\Isom}{Isom}
\DeclareMathOperator{\Sym}{Sym} % Symmetric algebra
\DeclareMathOperator{\Gr}{Gr} % Graded ring of a filtration
\newcommand{\SEC}{\mathrm{SEC}}
\newcommand{\ET}{\mathrm{\acute{E}T}}
\newcommand{\cl}{\mathrm{cl}} % Classical scheme/stack
\newcommand{\heart}{\heartsuit} % Heart of derived category
\DeclareMathOperator{\Map}{Map} % Mapping space
\DeclareMathOperator{\coker}{coker}
\DeclareMathOperator{\Hom}{Hom}
\DeclareMathOperator{\Ext}{Ext}
\let\im\relax
\DeclareMathOperator{\im}{im} % image
\newcommand{\Sch}{\mathsf{Sch}}
\newcommand{\Set}{\mathsf{Set}}
\newcommand{\Coh}{\mathsf{Coh}}
\newcommand{\QCoh}{\mathsf{QCoh}}
\newcommand{\op}{\mathrm{op}}
\newcommand{\DQCOH}{\mathrm{D}_{\QCoh}}
\newcommand{\spref}[1]{\href{http://stacks.math.columbia.edu/tag/#1}{#1}}
\newcommand{\spcite}[1]{\cite[\spref{#1}]{stacks-project}}
\newcommand{\loccit}{\emph{loc.\ cit.}}
\newcommand{\cI}{\mathcal{I}}
\newcommand{\cE}{\mathcal{E}}
\newcommand{\cN}{\mathcal{N}}
\newcommand\oh{\mathcal{O}}
\newcommand\co{\colon}
\newcommand{\iso}{\stackrel{\sim}{\to}}
\newcommand{\tensor}{\otimes}
\newcommand{\Coll}{\mathscr{C}}
\newcommand{\AR}[1]{\mathrm{(AR)_{#1}}} % Artin-Rees condition
\begin{document}

\title[Artin algebraization for pairs]{Artin algebraization for pairs with applications to the local structure of stacks and Ferrand pushouts}
\author[J. Alper]{Jarod Alper}
\address{Department of Mathematics\\University of Washington\\Box 354350\\Seattle, WA 98195-4350\\USA}
\email{jarod@uw.edu}
\author[D. Halpern-Leistner]{Daniel Halpern-Leistner}
\address{Malott Hall, Mathematics Dept\\Cornell University\\Ithaca, NY 14853\\USA}
\email{daniel.hl@cornell.edu}
\author[J. Hall]{Jack Hall}
\address{School of Mathematics \& Statistics\\The University of Melbourne\\Parkville,
  VIC, 3010\\Australia}
\email{jack.hall@unimelb.edu.au}
\author[D. Rydh]{David Rydh}
\address{KTH Royal Institute of Technology\\Department of
  Mathematics\\SE-100 44 Stockholm\\Sweden}
\email{dary@math.kth.se}
\thanks{The first author was partially supported by NSF grant
  DMS-1801976 and DMS-2100088. The second author was partially supported by NSF grants
  DMS-1945478 (CAREER), DMS-1601967, a Simons Collaboration Grant and
  the Alfred P.~Sloan Foundation. The third author was partially
  supported by the Australian Research Council DP210103397. The fourth
  author was supported by the Swedish Research Council 2015-05554 and
  the G\"oran Gustafsson Foundation for Research in Natural Sciences
  and Medicine.}
\date{May 17, 2022}
\subjclass[2020]{14B12, 14D23, 13B12}
\keywords{Artin algebraization, local structure of stacks, Milnor squares,
Ferrand pushouts, derived stacks, henselization}

\begin{abstract}
We give a variant of Artin algebraization along closed subschemes and
closed substacks. Our main application is the existence of \'etale, smooth, or syntomic neighborhoods
of closed subschemes and closed substacks. In particular, we prove
local structure theorems for stacks and their derived counterparts and the existence
of henselizations along linearly fundamental closed substacks. These results establish the existence of Ferrand pushouts, which answers positively a question of Temkin--Tyomkin.
\end{abstract}

\maketitle

%\tableofcontents

% -------------------------------------------------------------------

%\setcounter{secnumdepth}{0}
\begin{section}{Introduction}
The main technical result of this paper is a generalization of Artin's
algebraization theorem~\cite[Thm.~1.6]{artin_alg_formal_moduli_I}: from
algebraizations of complete local rings to algebraizations
of rings complete along an ideal. It is proven using Artin approximation
over henselian pairs following the approach of \cite{conrad-deJong} and
\cite[App.~A]{alper-hall-rydh_luna-stacks}.

\begin{theorem}[Artin algebraization for pairs] \label{T:artin-algebraization-intro}
Let $S$ be an excellent affine scheme and let $\stX$ be a category fibered in
groupoids, locally of finite presentation over $S$. Let $Z$ be an affine
scheme over $S$, complete along a closed subscheme $Z_0$. Assume that $Z_0\to S$ is of
finite type. Let $\eta\colon Z\to \stX$ be a morphism, formally versal at
$Z_0$. Then there exist
\begin{enumerate}
\item an affine scheme $W$ of finite type over $S$,
\item a closed subscheme $W_0\inj W$,
\item a morphism $\xi\colon W\to \stX$ over $S$ and 
\item a morphism $\varphi\colon (Z,Z_0)\to (W,W_0)$ over $S$
\end{enumerate}
such that the induced morphism $\widehat{\varphi}\colon Z\to \widehat{W}$
is an isomorphism and the isomorphism $\varphi_n\colon Z_n\to W_n$ on
infinitesimal
neighborhoods is compatible with $\eta$ and $\xi$ for every~$n$.
\end{theorem}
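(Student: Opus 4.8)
The plan is to transpose Artin's proof of his algebraization theorem to a pair: one uses Artin approximation over the henselian pair $(\Lambda,\Lambda_0)$, rather than over a complete local ring, to produce an algebraic model matching $\eta$ to a prescribed finite order along $Z_0$, and then uses formal versality to promote that finite-order match to the required isomorphism of formal completions. I would first reduce to the affine noetherian case: write $S=\Spec R$ with $R$ excellent, $Z=\Spec A$, $Z_0=\Spec B_0=V(\mathcal{I})$, so $A$ is $\mathcal{I}$-adically complete with $A/\mathcal{I}=B_0$ of finite type over $R$; since $R$ is excellent $B_0$ is noetherian, and as $\mathcal{I}/\mathcal{I}^2$ is finite over $B_0$, completeness forces $A$ noetherian. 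Presenting $B_0=R[\underline y]/(b_1,\dots,b_s)$ and choosing generators of $\mathcal{I}$ gives a surjection $Q:=R[\underline y,\underline t]^{\wedge}_{(\underline t)}\twoheadrightarrow A$ (the $t_j$ mapping to generators of $\mathcal{I}$) with finitely generated kernel $\mathfrak A=(b_1,\dots,b_s,f_1,\dots,f_r)$, where one may take $f_i\in(\underline t)Q$. Thus $Z$ is realized as a finitely presented closed subscheme of the completion $\widehat\Lambda=\Spec Q$ of $\Lambda=\mathbb A^{N}_R$ along $\Lambda_0=V(\underline t)$, with $Z_0$ mapping into $\Lambda_0$; let $\Lambda^{h}$ be the henselization of the pair $(\Lambda,\Lambda_0)$, which is noetherian and excellent with $\widehat{\Lambda^{h}}=Q$ and $\Lambda^{h}/(\underline t)\Lambda^{h}=R[\underline y]$.

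Fix an integer $n$, to be chosen at the end. Consider the limit-preserving category fibered in groupoids $F$ over $\Lambda^{h}$ whose objects over a $\Lambda^{h}$-algebra $C$ consist of a tuple $(\underline b',\underline f')\in C^{s+r}$ with $\underline f'\in(\underline t)C$ together with a morphism $\Spec\!\big(C/(\underline b',\underline f')\big)\to\stX$; it is limit-preserving because $\stX$ is locally of finite presentation. The generators of $\mathfrak A$ together with $\eta$ form an object of $F(\widehat{\Lambda^{h}})$. Since $\Lambda^{h}$ is excellent, $\Lambda^{h}\to\widehat{\Lambda^{h}}$ is regular, so N\'eron--Popescu desingularization applies, and Artin approximation over the henselian pair $(\Lambda^{h},(\underline t)\Lambda^{h})$---carried out as in \cite{conrad-deJong} and \cite[App.~A]{alper-hall-rydh_luna-stacks}---produces an object of $F(\Lambda^{h})$: a tuple $(\underline b,\underline f')$ (keeping $\underline b$ unchanged) and a morphism $\eta'\colon\Spec\!\big(\Lambda^{h}/(\underline b,\underline f')\big)\to\stX$ with $f'_i\equiv f_i$ modulo $(\underline t)^{n+1}$ and $\eta'$ $2$-isomorphic to $\eta$ over the common ring $A/\mathcal{I}^{n+1}$. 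Writing $\Lambda^{h}=\varinjlim_\mu\Lambda_\mu$ over \'etale neighborhoods of $\Lambda_0$ with $\Lambda_\mu$ of finite type over $R$, for $\mu\gg0$ all of this descends: set $W:=\Spec\!\big(\Lambda_\mu/(\underline b,\underline f'_\mu)\big)$, $W_0:=V((\underline t)\mathcal O_W)$, and let $\xi\colon W\to\stX$ descend $\eta'$. As an \'etale morphism of pairs that is an isomorphism modulo the ideal induces an isomorphism on completions, $\widehat W=\Spec\widehat C$ with $\widehat C=Q/(\underline b,\underline f')Q$, and one has the identification $\widehat C/(\underline t)^{n+1}\widehat C=A/\mathcal{I}^{n+1}$ equipped with a $2$-isomorphism $\eta\cong\xi$ over it.

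It remains to produce $\varphi$ with $\widehat\varphi$ an isomorphism. Using formal versality of $\eta$ at $Z_0$, I would construct, by induction on $m\ge n+1$ starting from the above identification, compatible $R$-morphisms $A\to\widehat C/(\underline t)^{m}\widehat C$ over $\stX$: at each stage the extension $\widehat C/(\underline t)^{m+1}\widehat C\twoheadrightarrow\widehat C/(\underline t)^{m}\widehat C$ is square-zero, its target is finite over $B_0$ (hence of finite type over $R$), and the relevant morphism is adic with center mapping into $Z_0$, so it is an admissible test extension for formal versality at $Z_0$. In the limit this gives a ring homomorphism $\psi\colon A\to\widehat C$, compatible with $\eta$ and $\xi$, reducing modulo $(\underline t)^{n+1}$ to the canonical surjection $A\twoheadrightarrow A/\mathcal{I}^{n+1}=\widehat C/(\underline t)^{n+1}\widehat C$; by complete Nakayama $\psi$ is surjective. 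To see it is an isomorphism, I would lift $\psi$---using that $Q=R[\underline y][[\underline t]]$ is a power series ring over a polynomial $R$-algebra, so there are no relations to obstruct lifting---to an $R$-homomorphism $\widetilde\psi\colon Q\to Q$ with $\widetilde\psi(\mathfrak A)\subseteq(\underline b,\underline f')Q$ and $\widetilde\psi\equiv\id$ modulo $(\underline t)^{n+1}$; the congruence forces $\mathrm{gr}_{(\underline t)}(\widetilde\psi)=\id$, so $\widetilde\psi$ is an automorphism of $Q$. Then $(\underline b,\underline f')Q$ and $\widetilde\psi(\mathfrak A)$ are generated by $(s+r)$-tuples agreeing modulo $(\underline t)^{n+1}$ with $\widetilde\psi(\mathfrak A)\subseteq(\underline b,\underline f')Q$; provided $n$ is taken larger than a uniform Artin--Rees bound for ideals of $Q$ generated by $\le s+r$ elements, one gets $(\underline b,\underline f')Q\cap(\underline t)^{n+1}Q\subseteq(\underline t)(\underline b,\underline f')Q$, hence $(\underline b,\underline f')Q=\widetilde\psi(\mathfrak A)+(\underline t)(\underline b,\underline f')Q$, and Nakayama (using $(\underline t)\subseteq\operatorname{Jac}Q$) gives equality. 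So $\widetilde\psi$ descends to an isomorphism $A\xrightarrow{\ \sim\ }\widehat C$, namely $\psi$; it carries $\mathcal{I}$ onto $(\underline t)\widehat C$, identifying $W_0$ with $Z_0$. Taking $\widehat\varphi$ to be the induced isomorphism $Z\to\widehat W$ and $\varphi$ its composite with $\widehat W\to W$ then yields $\varphi\colon(Z,Z_0)\to(W,W_0)$ with $\widehat\varphi$ an isomorphism, whence each $\varphi_n$ is compatible with $\eta$ and $\xi$.

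The main obstacle is the last step. Formal versality, on its own, delivers only a surjection $A\twoheadrightarrow\widehat C$; ruling out that the approximation has strictly shrunk $Z$ forces us to take the approximation order $n$ large, with the necessary size controlled by a uniform Artin--Rees estimate in the ambient ring $Q$---this is where excellence of $S$ is used a second time, and it is the analogue of the count of relations in Artin's original proof. A secondary but substantial ingredient is the precise formulation and proof of Artin approximation over henselian pairs invoked in Step~2, where N\'eron--Popescu desingularization replaces the classical use of Artin approximation over a field or complete local ring.
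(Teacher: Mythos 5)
Your overall architecture is the right one and closely parallels the paper's route (the paper deduces the statement from \Cref{T:Artin-algebraization}, whose proof runs through \Cref{P:weak-algebraization} and \Cref{L:approx_basic_case}: embed $Z$ into the completion of a smooth affine pair, approximate a presentation over the henselian pair, use formal versality to build a comparison map on completions, and upgrade it from a surjection to an isomorphism). But your final step has a genuine gap. Your approximation functor $F$ records only the \emph{generators} $(\underline b',\underline f')$ of the ideal, not the relations among them, and you then need the containment $(\underline b,\underline f')Q\cap(\underline t)^{n+1}Q\subseteq(\underline t)(\underline b,\underline f')Q$ for an ideal that is only produced \emph{after} $n$ has been fixed. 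The ``uniform Artin--Rees bound for ideals of $Q$ generated by $\le s+r$ elements'' that you invoke to break this circularity does not exist: already in $Q=k[x][[t]]$ with $I=(t)$, the principal ideals $J_m=(xt^m)$ satisfy $xt^{n+1}\in J_m\cap I^{n+1}$ but $xt^{n+1}\notin I\cdot J_m=(xt^{m+1})$ whenever $n<m$, so their Artin--Rees constants with respect to $(t)$ are unbounded even among ideals with one generator. Excellence of the base does not repair this (the example is excellent), and Huneke-type uniform Artin--Rees theorems give uniformity in the ideal $I$ for a fixed submodule, which is the opposite of what you need. Note also that surjectivity of $\psi$ plus agreement modulo $(\underline t)^{n+1}$ is genuinely insufficient on its own: $k[[x,t]]\twoheadrightarrow k[[x,t]]/(xt^{n+1})$ is an isomorphism modulo $(t)^{n+1}$ but not injective, so some extra input at this stage is unavoidable.

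The standard repair --- used by Artin, by Conrad--de Jong, and by the paper in \Cref{L:approx_basic_case} --- is to enlarge the data being approximated to a two-term complex $\sE_2\xrightarrow{\alpha}\sE_1\xrightarrow{\beta}\sO$, i.e.\ to approximate a module of syzygies along with the generators. The Artin--Rees condition $\AR{N}$ is then imposed on, and its constant computed from, the \emph{original} complex, which is known before the approximation order is chosen; the refined Artin--Rees theorem \cite[Thm.~A.16]{alper-hall-rydh_luna-stacks} then yields an abstract isomorphism of graded modules $\Gr_{\sI}\sO_Z\cong\Gr_{\sI}\sO_{\widehat W}$ for the approximated quotient. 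This does not directly prove equality of ideals; instead one combines the graded isomorphism with the surjection coming from formal versality and concludes by Vasconcelos' lemma that a surjective endomorphism of a finitely generated module is bijective (\Cref{L:vasconcelos}, packaged as \Cref{P:closed/iso-cond:complete}\itemref{PI:iso}). If you add the relations to your functor $F$ and replace the uniform Artin--Rees claim by this graded-module comparison, your argument becomes essentially the paper's proof specialized to the affine case.
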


We prove a more general version when $Z$ is a stack in \Cref{T:Artin-algebraization}. This 
generalizes~\cite[App.~A]{alper-hall-rydh_luna-stacks} and is used to establish a local structure theorem for stacks (\Cref{T:local-structure:excellent}). We will return to this shortly.

\subsection*{Application: \'Etale neighborhoods of affine subschemes}
As an application of \Cref{T:artin-algebraization-intro} we have the existence of affine \'etale neighborhoods.

\begin{theorem}[Affine \'etale neighborhoods]\label{T:affine-etale-nbhds}
Let $\stX$ be a quasi-separated algebraic stack with affine stabilizers,
and consider a diagram
\[\xymatrix{
  W_0 \ar@{^(-->}[r] \ar[d]^{f_0}  & W \ar@{-->}[d]^f \\
  \stX_0 \ar@{^(->}[r]  & \stX,   
}\]
where $\stX_0\inj \stX$ is a closed immersion and $f_0\colon W_0\to \stX_0$ is an \'etale (resp.\ smooth) morphism with $W_0$ affine. 
Then there exist an affine scheme $W$ and an \'etale
(resp.\ smooth) morphism $f\colon W\to \stX$ such that $f|_{\stX_0}=f_0$. 
\end{theorem}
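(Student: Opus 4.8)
The plan is to deduce \Cref{T:affine-etale-nbhds} from Artin algebraization for pairs (\Cref{T:artin-algebraization-intro}): first produce a formal version of the desired neighbourhood along $\stX_0$, then algebraize its coordinate ring, and finally pass to an open neighbourhood. I would begin with a reduction. Since $W_0$ is quasi-compact, replace $\stX$ by a quasi-compact open substack containing the image of $f_0$; this is harmless since $\stX_0\inj\stX$ is a closed immersion, so the relevant closed substack and the conclusion are unchanged. A standard limit argument then reduces to the case that $\stX$ is of finite presentation over an excellent affine scheme $S$, with $W_0$ of finite type over $S$: one first makes $\stX_0\inj\stX$ of finite presentation by spreading out its defining ideal, and then applies Rydh's Noetherian approximation with \emph{affine} transition morphisms, so that a solution $W_\lambda\to\stX_\lambda$ over a finite-type approximation $\stX_\lambda$ pulls back to a solution $W_\lambda\times_{\stX_\lambda}\stX\to\stX$ that is still an affine scheme (its projection to $W_\lambda$ being a base change of an affine morphism).

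Next I would build the formal extension of $f_0$ and algebraize it. Let $\stX_n\inj\stX$ be the $n$-th infinitesimal neighbourhood of $\stX_0$. In the \'etale case, topological invariance of the small \'etale site yields a unique compatible tower of \'etale morphisms $f_n\colon W_n\to\stX_n$ lifting $f_0$, with each $W_n$ affine (a square-zero thickening of an affine scheme is affine). In the smooth case, the obstruction to lifting a smooth morphism across a square-zero extension lies in an $H^2$ of a quasi-coherent sheaf on the affine scheme $W_0$, hence vanishes, so I would choose such a tower of affine smooth morphisms $f_n$ inductively. Put $Z := \Spec A$ with $A := \varprojlim_n\Gamma(W_n,\sO_{W_n})$; since the successive layers of the tower are cut out by coherent ideals on the Noetherian $\stX$, each $\Gamma(W_n,\sO_{W_n})$ is finite over $\Gamma(W_0,\sO_{W_0})$, so $A$ is Noetherian and complete along $I := \ker(A\to\Gamma(W_0,\sO_{W_0}))$, with $Z_0 := W_0$ and with $n$-th infinitesimal neighbourhood $W_n$. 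The tower $(f_n)$ is a formal object of $\stX$ over $Z$; since $\stX$ is an algebraic stack with quasi-affine diagonal (from the affine stabilizers), formal objects over the complete Noetherian ring $A$ are effective, so $(f_n)$ comes from a morphism $\eta\colon Z\to\stX$ with $\eta|_{Z_n}$ equal to the composite $W_n\to\stX_n\inj\stX$. As each $f_n$ is formally smooth (resp.\ \'etale), $\eta$ is formally smooth along $Z_0$, in particular formally versal at $Z_0$, and $Z_0$ is of finite type over $S$ --- so \Cref{T:artin-algebraization-intro} applies to $\eta$.

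Applying \Cref{T:artin-algebraization-intro} I obtain an affine scheme $W$ of finite type over $S$ (hence Noetherian), a closed subscheme $W_0\inj W$ (identified with the given one via $\varphi_0\colon Z_0\iso W_0$), a morphism $\xi\colon W\to\stX$, and $\varphi\colon(Z,Z_0)\to(W,W_0)$ with $\widehat\varphi\colon Z\iso\widehat W$ an isomorphism, compatible with $\eta$ and $\xi$ on every infinitesimal neighbourhood. Two points then remain. First, $\xi$ is \'etale (resp.\ smooth) in a neighbourhood of $W_0$: under $\widehat\varphi$ the completion of $\xi$ along $W_0$ is the completion of the formally smooth (resp.\ \'etale) $\eta$, and $\xi|_{W_0} = f_0$ is \'etale (resp.\ smooth), so $\xi$ is \'etale (resp.\ smooth) at each point of $W_0$; as $\xi$ is locally of finite presentation, this locus is open and contains $W_0$. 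Second, $W\times_\stX\stX_0 = W_0$ near $W_0$: the closed subscheme $V := W\times_\stX\stX_0$ contains $W_0$, and its completion along $W_0$ is $\widehat W\times_{\widehat{\stX}}\stX_0 = \varinjlim_n(W_n\times_{\stX_n}\stX_0) = \varinjlim_n W_0 = W_0$ (as each deformation $W_n\to\stX_n$ restricts to $f_0$ over $\stX_0\inj\stX_n$), so the coherent sheaf $\sI_{W_0}/\sI_V$ has vanishing completion along $W_0$ and hence is supported off $W_0$. Intersecting these two open sets and using that $W$ is affine and $W_0$ is closed in it, I choose a principal open $D(c)$ inside the intersection still containing $W_0$, and replace $W$ by $D(c)$: then $f := \xi$ is \'etale (resp.\ smooth), $W$ is affine, $W\times_\stX\stX_0 = W_0$, and $f|_{W_0} = f_0$. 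Unwinding the reductions completes the proof.

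The hard part will be arranging the hypotheses of \Cref{T:artin-algebraization-intro}, and in particular producing the honest morphism $\eta\colon Z\to\stX$ from the purely infinitesimal data $(f_n)$: this is where the algebraicity of $\stX$, beyond its being a category fibered in groupoids locally of finite presentation, really enters --- through the effectivity of formal objects over the complete Noetherian ring $A$. (Formal versality of $\eta$ is then immediate from its construction.) The reduction of the first paragraph is standard but does require the indicated care with finite presentation and affine transition morphisms. Everything after the algebraization --- openness of the \'etale/smooth locus, the comparison of two closed subschemes with the same formal completion, and the principal-open trick --- is routine, so essentially all the content is imported from \Cref{T:artin-algebraization-intro}.
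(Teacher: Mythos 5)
Your proposal is correct and follows essentially the same route as the paper: reduce to finite presentation over a noetherian base, build the tower of affine deformations $f_n\colon W_n\to\stX_n$, effectivize to $\eta\colon Z\to\stX$ with $Z=\Spec\varprojlim_n\Gamma(W_n,\sO_{W_n})$, apply \Cref{T:artin-algebraization-intro}, and shrink to an affine open where $f$ is \'etale (resp.\ smooth) and $f^{-1}(\stX_0)=W_0$. The only inaccuracy is your justification of effectivity --- affine stabilizers do not imply quasi-affine diagonal --- but the conclusion stands because the Tannaka duality theorem the paper invokes applies directly to noetherian stacks with affine stabilizers.
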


If $\stX$ is an affine scheme, then \Cref{T:affine-etale-nbhds} is
\spcite{04D1} ($f_0$ \'etale) and \cite[Thm.~6]{elkik} ($f_0$ smooth).  
For non-affine schemes and algebraic spaces, these results are new and answer
positively a question of
Temkin and Tyomkin~\cite[Qstn.~5.3]{temkin-tyomkin_Ferrand}. %Rmk.~6.3]

\subsection*{Application: Local structure of stacks} We now generalize \Cref{T:affine-etale-nbhds} from extending affine \'etale neighborhoods to extending linearly fundamental \'etale neighborhoods.  By definition, an algebraic stack $\stX$ is \emph{fundamental} if there is an affine morphism $\stX \to B\GL_{n, \ZZ}$ for some $n$, and  \emph{linearly fundamental} if it is fundamental and cohomologically affine; see \cite[\S2.2]{alper-hall-rydh_etale-local-stacks} for further discussion.

In order to formulate mixed-characteristic versions of the local structure results, we recall from \cite[\S15]{alper-hall-rydh_etale-local-stacks} the following conditions on an algebraic stack $\stX$.
\begin{enumerate}
  \myitem[FC]\label{Cond:FC} There is only a finite number of
    different characteristics in $\stX$.
  \myitem[PC]\label{Cond:PC} Every closed point of $\stX$ has
    positive characteristic.
  \myitem[N]\label{Cond:N} Every closed point of $\stX$ has nice
    stabilizer \cite[Defn.~1.1]{hall-rydh_alg-groups-classifying} (i.e., is an extension of a finite linearly reductive group scheme by an algebraic group of multiplicative type).
  \end{enumerate}
If $\stX$ is linearly fundamental, then \ref{Cond:PC}$\implies$\ref{Cond:N} as linearly reductive group schemes in positive characteristic are nice \cite{nagata_complete-reducibility}, \cite[Thm.~1.2]{hall-rydh_alg-groups-classifying}.
The condition that we often impose will be of the following
form for some morphism of stacks $\stW_0\to \stX$: assume either that $\stW_0$ satisfies \ref{Cond:N}, or $\stX$ satisfies \ref{Cond:FC}.

We also remind the reader of another type of algebraic stack from  \cite[\S2.2]{alper-hall-rydh_etale-local-stacks}: an algebraic stack $\stX$ is \emph{nicely fundamental} if it admits an affine morphism to $B_SQ$, where $Q \to S$ is a nice and embeddable group scheme over $S$. It follows that nicely fundamental stacks are linearly fundamental. 

\begin{theorem}[Local structure of stacks]\label{T:local-structure:excellent}
Let $S$ be an excellent algebraic space and let $\stX$ be an algebraic
stack, quasi-separated and locally of finite presentation over $S$ with
affine stabilizer groups.  Consider a diagram
\[\xymatrix{
  \stW_0 \ar@{^(-->}[r] \ar[d]^{f_0}  & \stW \ar@{-->}[d]^f \\
  \stX_0 \ar@{^(->}[r]  & \stX,   
}\]
where $\stX_0\inj \stX$ is a closed immersion and $f_0\colon \stW_0\to \stX_0$ is 
a morphism of algebraic stacks with $\stW_0$ linearly fundamental.
\begin{enumerate}
\item \label{TI:local-structure:excellent1}
  If $f_0$ is smooth (resp.\ \'etale), then there exists a smooth
  (resp.\ \'etale) morphism $f\colon \stW\to \stX$ such that $\stW$ is
  fundamental and $f|_{\stX_0}\simeq f_0$.
\item \label{TI:local-structure:excellent2}
  Assume that $\stW_0$ satisfies \ref{Cond:PC} or \ref{Cond:N} or $\stX_0$ satisfies
  \ref{Cond:FC}. If $f_0$ is syntomic and $\stX_0$ has the resolution property, then
  there exists a syntomic morphism $f\colon \stW\to \stX$ such that $\stW$ is
  fundamental and $f|_{\stX_0}\cong f_0$.
\end{enumerate}
\end{theorem}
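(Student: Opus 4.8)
The plan is to solve the lifting problem formally along $\stX_0$ and then algebraize by applying \Cref{T:Artin-algebraization}, taking $\stX$ itself as the target ``moduli problem'' and the formal completion of $\stX$ along $\stX_0$ — equipped with the compatible tower of stacks we will build — as the (formally versal) object. By standard limit arguments one first reduces to the case where $S$ is affine, excellent and noetherian and $\stX$ is of finite presentation over $S$, using that ``smooth/\'etale/syntomic'' and ``fundamental'' are finitely presented conditions. I treat \Cref{TI:local-structure:excellent1} in detail; \Cref{TI:local-structure:excellent2} follows the same template with the modifications indicated at the end.

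\emph{Formal step.} Write $\stX_n\inj\stX$ for the $n$th infinitesimal neighbourhood of $\stX_0$, so $\stX_{n-1}\inj\stX_n$ is a square-zero closed immersion with ideal $\cI_n$, a quasi-coherent $\oh_{\stX_0}$-module. I would construct, inductively in $n$, a compatible system of lifts $f_n\colon\stW_n\to\stX_n$ of $f_0$ (so $\stW_{n-1}\cong\stW_n\times_{\stX_n}\stX_{n-1}$) with each $f_n$ smooth (resp.\ \'etale) and each $\stW_n$ linearly fundamental. Given $f_{n-1}$, the obstruction to the existence of a flat lift $\stW_n$ lies in $\Ext^2_{\oh_{\stW_0}}(L_{\stW_0/\stX_0},f_0^*\cI_n)$ — the obstruction group naturally lives over $\stW_{n-1}$, but since the coefficient sheaf is an $\oh_{\stW_0}$-module and $L_{\stW_{n-1}/\stX_{n-1}}$ base-changes to $L_{\stW_0/\stX_0}$ by flatness, it is computed over $\stW_0$. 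Since $f_0$ is smooth, $L_{\stW_0/\stX_0}=\Omega_{\stW_0/\stX_0}$ is a vector bundle in degree $0$, so this group is $H^2(\stW_0,\SHom(\Omega_{\stW_0/\stX_0},f_0^*\cI_n))$, which vanishes because $\stW_0$ is cohomologically affine. Hence a flat lift exists, and it is automatically smooth (resp.\ \'etale) since the fibres are unchanged (and $\Omega_{\stW_n/\stX_n}$ vanishes by Nakayama). That $\stW_n$ is again cohomologically affine is the standard square-zero filtration argument, and that it is again fundamental follows by extending the structure morphism $\stW_{n-1}\to B\GL_N$: a rank-$N$ vector bundle on $\stW_{n-1}$ deforms to $\stW_n$ because $\Ext^2_{\oh_{\stW_0}}$ of a vector bundle with itself, tensored with $f_0^*\cI_n$, again vanishes by cohomological affineness, and the deformed morphism is still affine since affineness is insensitive to nilpotent thickenings of the source. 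Passing to the colimit gives a formal algebraic stack $\widehat\stW=\varinjlim_n\stW_n$, complete along $\stW_0$ (which is of finite type over $S$), together with a morphism $\widehat\eta\colon\widehat\stW\to\stX$ that is formally smooth (resp.\ formally \'etale) along $\stW_0$, hence formally versal there.

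\emph{Algebraization step.} Apply \Cref{T:Artin-algebraization} with target $\stX$, source $\widehat\stW$, closed substack $\stW_0$ and versal morphism $\widehat\eta$. This produces an algebraic stack $\stW'$ of finite type over $S$, a closed immersion $\stW_0\inj\stW'$, a morphism $\xi\colon\stW'\to\stX$, and a morphism $\widehat\stW\to\stW'$ inducing an isomorphism $\widehat\stW\iso\widehat{\stW'}$ on formal completions along $\stW_0$ and compatible with $\widehat\eta$ and $\xi$ on all infinitesimal neighbourhoods. Since $\xi$ is locally of finite presentation and its restriction to $\widehat{\stW'}\cong\widehat\stW$ is the formally smooth (resp.\ \'etale) morphism $\widehat\eta$, smoothness (resp.\ \'etaleness) of $\xi$ can be checked on completed local rings, so the locus in $\stW'$ where $\xi$ is smooth (resp.\ \'etale) is open and contains $\stW_0$.

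\emph{Fundamental open and conclusion.} It remains to pass to a \emph{fundamental} open neighbourhood of $\stW_0$ on which $\xi$ is still smooth (resp.\ \'etale). This is the main obstacle: open substacks of fundamental stacks need not be fundamental and the fundamental locus is not obviously open, so one cannot simply intersect. Here I would invoke the local structure theory of \cite{alper-hall-rydh_etale-local-stacks}: because the completion of $\stW'$ along $\stW_0$ is fundamental and $\stW_0$ is \emph{linearly} fundamental, $\stW'$ admits a fundamental open substack $\stW$ with $\stW_0\subseteq\stW$ contained in the smooth (resp.\ \'etale) locus of $\xi$. Setting $f:=\xi|_{\stW}$ then gives a smooth (resp.\ \'etale) morphism with $\stW$ fundamental, and the compatibility of $\widehat\stW\to\stW'$ with $\widehat\eta$ and $\xi$ on the $0$th neighbourhood gives $f|_{\stX_0}\simeq f_0$ (one checks $\stW\times_\stX\stX_0=\stW_0$). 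For \Cref{TI:local-structure:excellent2} the only changes are: $L_{\stW_0/\stX_0}$ is now perfect of Tor-amplitude $[-1,0]$ rather than a bundle in degree $0$, so the obstruction group $\Ext^2_{\oh_{\stW_0}}(L_{\stW_0/\stX_0},-)$ still vanishes by cohomological affineness of $\stW_0$ and a flat — hence syntomic — lift exists at each stage; and the resolution property of $\stX_0$ together with the conditions \ref{Cond:FC}/\ref{Cond:N}/\ref{Cond:PC} is precisely what is needed to run the cohomology-vanishing and fundamental-locus arguments (the obstruction vanishing in the formal step and the openness input above) in the syntomic, mixed-characteristic setting, following \cite[\S15]{alper-hall-rydh_etale-local-stacks}.
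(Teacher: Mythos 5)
For part \itemref{TI:local-structure:excellent1} your route is essentially the paper's: build the compatible tower $f_n\colon \stW_n\to\stX_n$ by deformation theory (obstructions in $\Ext^2$ killed by cohomological affineness of $\stW_0$), algebraize via \Cref{T:Artin-algebraization}, and then shrink to a fundamental open (the paper does this concretely by passing to the preimage of an affine open of the adequate moduli space $W$ avoiding $\pi(\stW\smallsetminus\stU)$). The one step you assert rather than prove is effectivity: \Cref{T:Artin-algebraization} takes as input a \emph{complete linearly fundamental pair} in the sense of \Cref{D:complete-pair}, i.e.\ an honest noetherian algebraic stack with $\Coh(\widehat\stW)\simeq\varprojlim\Coh(\stW_n)$, not merely the formal colimit of the tower. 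Producing this from the tower is a substantive theorem (coherent completeness for linearly fundamental stacks, \cite[Thm.~1.11]{alper-hall-rydh_etale-local-stacks}, which is what the paper invokes in its Step 1); in the affine case it is elementary, but for stacks it cannot be waved through.

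Part \itemref{TI:local-structure:excellent2} as you propose it would fail at the algebraization step, and this is a genuine gap. Your formal step is fine: a flat (hence syntomic) lift $\stW_n\to\stX_n$ exists at each stage because the obstruction group vanishes. But the resulting morphism $\widehat\eta\colon\widehat\stW\to\stX$ is only \emph{syntomic} along $\stW_0$, not smooth, and a syntomic morphism is not formally versal in the sense of \Cref{D:formally_versal}: in the lifting problem the obstruction lies in $\Ext^1(\LL_{\widehat\stW/\stX}|_{T'},M)$, and for a non-smooth lci morphism $\LL$ has a nonzero piece in the ``wrong'' degree, so this group does not vanish even over a cohomologically affine base (this is exactly the content of \Cref{L:formally_versal}, whose hypothesis is smoothness of the $f_n$). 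Since \Cref{T:Artin-algebraization} requires formal versality of $\eta$ at $\stZ_0$ --- it is used to propagate the identification $\stZ_N\cong\stW_N$ to all higher infinitesimal neighbourhoods --- your template does not apply. This is also where the resolution property of $\stX_0$ actually enters, not merely as a technical convenience: the paper first replaces $\stX$ by $\stX\times B\GL_n$ to make $f_0$ affine, uses the resolution property to factor $f_0$ as a regular closed immersion $\stW_0\inj\stY_0=\VV(\sE_0)$ followed by a smooth affine morphism $\stY_0\to\stX_0$, applies the smooth case to extend $\stY_0$ to a smooth $\stY\to\stX$, and then cuts $\stW$ out of $\stY$ by extending the conormal bundle $\sI_0/\sI_0^2$ to a vector bundle $\sN$ on $\stY$ together with a map $\sN\to\sO_\stY$, checking flatness of $\stW\to\stX$ via the local criterion for regular sequences. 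Some reduction of this kind to the smooth case is unavoidable; the direct deformation-plus-algebraization approach cannot handle syntomic $f_0$.
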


Syntomic means flat and locally of finite presentation, with fibers that are
local complete intersections. An important example in our context is that
any morphism $BG\to \stG_x$ is smooth in characteristic zero but merely
syntomic in positive characteristic.

For further refinements on $\stW$, see
\Cref{T:refinement1,T:refinement2} below and
\cite[\S16--17]{alper-hall-rydh_etale-local-stacks}.
For a non-noetherian version,
see \Cref{T:local-structure:non-noeth}. We also have the following result.

\begin{theorem}[Local structure of stacks at non-closed points]\label{T:local-structure:non-closed}
Let $\stX$ be a quasi-separated algebraic stack with affine
stabilizer groups. Let $x\in |\stX|$
be a point with residual gerbe $\stG_x$ and let $f_0\colon \stW_0\to \stG_x$ be
a syntomic (resp.\ smooth, resp.\ \'etale) morphism with $\stW_0$ linearly fundamental. 
Then there exists a
syntomic (resp.\ smooth, resp.\ \'etale) morphism $f\colon \stW\to \stX$ such
that $\stW$ is fundamental and $f|_{\stG_x}\cong f_0$.
\end{theorem}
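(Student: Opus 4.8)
The plan is to reduce \Cref{T:local-structure:non-closed} to \Cref{T:local-structure:excellent} by passing to a suitable excellent base and replacing the non-closed point $x$ by a closed one. First, I would observe that the question is local on $\stX$: since the residual gerbe $\stG_x \to \stX$ factors through the locally closed substack $\stX_x$ (the residual gerbe is closed in its schematic image), I may replace $\stX$ by an open neighborhood of $x$ in which $x$ becomes closed, so that $\stG_x$ is a closed substack. Here one must be a little careful: $\stX$ is merely quasi-separated with affine stabilizers and is \emph{not} assumed of finite presentation over any excellent base, so \Cref{T:local-structure:excellent} does not apply directly. The standard remedy is a limit argument: write $\stX$ (after shrinking) as a cofiltered limit of algebraic stacks $\stX^\lambda$ of finite presentation over $\Spec \ZZ$ (which is excellent), with affine transition maps and affine stabilizers, descend the closed immersion $\stG_x \inj \stX$ to a closed immersion $\stX^\lambda_0 \inj \stX^\lambda$ for $\lambda$ large, and similarly descend the linearly fundamental stack $\stW_0$ and the morphism $f_0$ (linear fundamentality is a finitely presented condition, and syntomic/smooth/\'etale all descend and ascend in limits).

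Once everything lives over the excellent noetherian base $\stX^\lambda$, I would apply \Cref{T:local-structure:excellent}: its hypotheses are met since $f_0$ is syntomic (resp.\ smooth, resp.\ \'etale), $\stW_0$ is linearly fundamental, and $\stX^\lambda_0$ has the resolution property because it is a finitely presented model of a residual gerbe — a residual gerbe is a gerbe over a field, hence (having affine stabilizer) linearly fundamental when the stabilizer is nice, and in general one uses that residual gerbes with affine stabilizers have the resolution property after the descent, or more robustly one first reduces to the case where $\stW_0$ is nicely fundamental. For the syntomic case one also needs one of \ref{Cond:PC}, \ref{Cond:N}, \ref{Cond:FC}; over a residual gerbe all points have a single characteristic, so \ref{Cond:FC} holds automatically (only finitely many — in fact one — characteristic occurs), which conveniently removes the extra hypothesis that appears in \Cref{T:local-structure:excellent}. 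This produces a syntomic (resp.\ smooth, resp.\ \'etale) $f^\lambda \colon \stW^\lambda \to \stX^\lambda$ with $\stW^\lambda$ fundamental and $f^\lambda|_{\stX^\lambda_0} \cong f_0$ (after enlarging $\lambda$ so that the given isomorphism over $\stG_x$ descends).

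Finally, I would pull back $f^\lambda$ along $\stX \to \stX^\lambda$ to obtain $f \colon \stW := \stW^\lambda \times_{\stX^\lambda} \stX \to \stX$. Base change preserves syntomic/smooth/\'etale, and it preserves fundamentality since $\stW^\lambda \to B\GL_{n,\ZZ}$ affine pulls back to $\stW \to B\GL_{n,\ZZ}$ affine. The restriction $f|_{\stG_x}$ is obtained by pulling back $f^\lambda|_{\stX^\lambda_0}$ along $\stG_x \to \stX^\lambda_0$, which is $f_0$ by construction; one checks the isomorphism is compatible. I expect the main obstacle to be the bookkeeping in the limit argument — specifically, verifying that the resolution property of the residual gerbe (needed to invoke the syntomic case) descends to a finitely presented model, and checking that linear fundamentality of $\stW_0$ really is inherited by a finitely presented model $\stW^\lambda_0$ over $\stX^\lambda_0$ so that \Cref{T:local-structure:excellent} applies; these are exactly the points where the noetherian hypotheses of \Cref{T:local-structure:excellent} interact delicately with the non-noetherian setup here, and where one may instead wish to cite the non-noetherian version \Cref{T:local-structure:non-noeth} to streamline the reduction.
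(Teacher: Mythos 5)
There is a genuine gap at the very first step. You claim that, since ``the residual gerbe is closed in its schematic image,'' you may replace $\stX$ by an open neighborhood of $x$ in which $x$ becomes closed, so that $\stG_x$ becomes a closed substack. This is false for a general non-closed point: $\{x\}$ is closed in some open neighborhood if and only if $\{x\}$ is locally closed in $|\stX|$, and this fails already for the generic point of $\Spec \ZZ$ (or of any positive-dimensional irreducible stack), where the schematic image of $\Spec\QQ\to\Spec\ZZ$ is all of $\Spec\ZZ$ and $\Spec\QQ$ is not a closed subscheme of any open. The entire difficulty of the statement at non-closed points is precisely that $\stG_x\inj\stX$ is not an immersion; once you (incorrectly) assume it is a closed immersion, the remainder of your argument is essentially the already-established reduction in \Cref{T:local-structure:non-noeth} and proves nothing new.

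The paper's route is to observe that $\stG_x\to\stX$ is a \emph{pro-affine-immersion}: a cofiltered intersection $\stG_x=\cap_\lambda \stX_\lambda$ of immersions $\stX_\lambda\inj\stX$ with eventually affine transition maps (\cite[Lem.~2.1]{MR3754421}). One then descends $f_0$ to $f_\beta\colon\stW_\beta\to\stX_\beta$ for $\beta$ large, checks that $\stW_\beta$ is still linearly fundamental for $\beta\gg 0$ using \cite[Prop.~15.3]{alper-hall-rydh_etale-local-stacks} — this is where the conditions \ref{Cond:PC}/\ref{Cond:N}/\ref{Cond:FC} enter, and \ref{Cond:FC} holds for free since $\stG_x$ is a one-point space (you did correctly spot this point) — and finally shrinks $\stX$ so that $\stX_\beta\inj\stX$ is a closed immersion, at which stage \Cref{T:local-structure:non-noeth} applies. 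This is \Cref{T:local-structure:non-noeth:pro-immersion}. The descent of linear fundamentality along the system $\{\stW_\lambda\}$, which you flag only as ``bookkeeping,'' is in fact the substantive step replacing your invalid openness reduction; without it the argument does not close.
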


We give a more general version for pro-affine-immersions in
\Cref{T:local-structure:non-noeth:pro-immersion}.  Note that the inclusions
$\stX_0 \inj \stX$ of a closed substack in \Cref{T:local-structure:excellent}
and $\stG_x \inj \stX$ of a residual gerbe in
\Cref{T:local-structure:non-closed} are both pro-affine-immersions. We also
have refinements on the local charts (cf.\ \cite[Prop.\ 12.5 and
  Cor.~17.4]{alper-hall-rydh_etale-local-stacks}).

\begin{theorem}[Refinement 1]\label{T:refinement1}
Let $\stW$ be a fundamental stack. Let $\stW_0\inj \stW$ be a pro-affine-immersion. Assume that $\stW_0$ is linearly fundamental and satisfies \ref{Cond:PC}, \ref{Cond:N}, or
 \ref{Cond:FC}. If
$g\colon \stW\to \stX$ is a morphism to an algebraic stack with affine (resp.\
separated) diagonal,
such that $g|_{\stW_0}$ is representable, then there exists an \'etale neighborhood
$\stW'\to \stW$ of $\stW_0$ such that $\stW'$ is fundamental and
$g|_{\stW'}$ is affine (resp.\ representable).
\end{theorem}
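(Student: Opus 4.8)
The plan is to translate both statements into the problem of spreading out a property of the \emph{relative inertia} $I_{\stW/\stX}\to\stW$ from $\stW_0$ to an \'etale neighborhood. Recall that $g$ is representable exactly when $I_{\stW/\stX}\to\stW$ is the trivial group, and that a representable separated morphism from a cohomologically affine stack to a stack with affine diagonal is automatically affine; since $\stW_0$ is linearly fundamental and $g|_{\stW_0}$ is representable, in the affine case $g|_{\stW_0}$ is in fact already affine. So in both cases it suffices to produce an \'etale neighborhood $\stW'\to\stW$ of $\stW_0$ over which $g$ is representable (resp.\ affine). The basic structural inputs are: $\stW$ fundamental forces $I_\stW\to\stW$ to be affine; $\stX$ having separated (resp.\ affine) diagonal makes $I_{\stW/\stX}=\ker(I_\stW\to g^{*}I_\stX)$ a closed subgroup of $I_\stW$, hence $I_{\stW/\stX}\to\stW$ affine; and the hypothesis on $g|_{\stW_0}$ says precisely that $I_{\stW/\stX}$ restricts to the trivial group over $\stW_0$ (relative inertia commutes with the monomorphism $\stW_0\to\stW$).

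First I would run a standard approximation, writing $\stX$, $g$, and $\stW_0\inj\stW$ as a cofiltered limit of finitely presented data over $\ZZ$ with affine transition maps, so that (representability and affineness being detected in the limit) one may assume $\stX$ is locally noetherian and $I_{\stW/\stX}\to\stW$ is a finitely presented affine group space. Since its fibres over $|\stW_0|$ are trivial, hence $0$-dimensional, upper semicontinuity of fibre dimension produces an open substack containing $\stW_0$ over which $I_{\stW/\stX}\to\stW$ is quasi-finite; replacing $\stW$ by this open I may assume $I_{\stW/\stX}$ is a quasi-finite affine subgroup of the affine group $I_\stW$, trivial over $\stW_0$. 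Applying the local structure theorem (\Cref{T:local-structure:non-closed}, in the pro-affine-immersion form) to $\stW_0\inj\stW$, I would further arrange, after a harmless \'etale shrinking, that $\stW$ itself is linearly fundamental, with good moduli space $\Spec A$ and with $\stW_0$ lying over a pro-affine-immersion $\Spec A_0\hookrightarrow\Spec A$ exhibiting $A$ as henselian along the relevant ideal.

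The heart of the matter is the uniform spreading-out along $\stW_0$. For a single point $w\in|\stW_0|$ with residual gerbe $\stG_w$, triviality (resp.\ affineness) of $g$ over an \'etale neighborhood of $\stG_w$ is exactly \cite[Prop.~12.5 and Cor.~17.4]{alper-hall-rydh_etale-local-stacks}; the present refinement is to carry this out simultaneously for all $w\in|\stW_0|$, and this is where the hypotheses \ref{Cond:PC}, \ref{Cond:N}, or \ref{Cond:FC} on the linearly fundamental stack $\stW_0$ are indispensable — they force the stabilizers at points of $\stW_0$ to be nice (or linearly reductive in equal characteristic zero), and hence give control of $I_{\stW/\stX}$ along $\stW_0$; without them the locus where $g$ is representable need not be locally closed. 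I would do the spreading-out by working over $\Spec A$: coherent completeness of $\stW$ along $\Spec A_0$ reduces triviality (resp.\ affineness) of $g$ over the pro-neighborhood of $\stW_0$ to an assertion over the $n$-th infinitesimal neighborhoods, and then Artin approximation over the henselian pair $(A,A_0)$ — i.e.\ \Cref{T:artin-algebraization-intro} and the techniques behind it — upgrades this to triviality (resp.\ affineness) of $g$ over an \'etale, indeed \'etale-affine, neighborhood $\stW'\to\stW$; in the affine case one finishes with Serre's criterion relative to $\stX$, using that the cohomological affineness of $\stW_0$ propagates to the neighborhood so that $g|_{\stW'}$ is representable, separated, quasi-compact and has vanishing higher direct images. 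Finally, since $\stW'\to\stW$ may be taken quasi-affine and $\stW$ is fundamental, $\stW'$ is fundamental (a quasi-affine stack over $B\GL_{n,\ZZ}$ has affine diagonal and the resolution property), and further shrinking $\stW'$ preserves this.

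I expect the main obstacle to be exactly this uniform spreading-out step: converting the pointwise statements over the residual gerbes $\stG_w$ into a single \'etale neighborhood of the possibly badly non-finite-type, non-closed substack $\stW_0$. Naive openness of the representable or affine locus is false for general $g$, so one genuinely needs the niceness hypotheses on $\stW_0$ to constrain $I_{\stW/\stX}$ there, and one needs the coherent-completeness/Artin-algebraization machinery — not mere limit arguments — to descend from a formal or pro-neighborhood to an \'etale one. The remaining bookkeeping (keeping $\stW'$ quasi-affine so it stays fundamental, and tracking the isomorphism over $\stW_0$) is routine.
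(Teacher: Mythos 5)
Your high-level strategy (limit methods, then make the ambient stack fundamental near $\stW_0$, then spread out representability/affineness of $g$ from $\stW_0$) matches the paper's, and you correctly identify the spreading-out step as the crux. But your resolution of that crux does not work as written, and it is also not what the paper does. First, an \'etale shrinking does not make the adequate moduli space $\Spec A$ henselian along $A_0$, and $\stW$ is certainly not coherently complete along $\stW_0$ (that would require $A$ to be $I_0$-adically complete, and $\stW_0$ need not even be closed); so neither Artin approximation over the henselian pair $(A,A_0)$ nor the coherent-completeness reduction to infinitesimal neighborhoods is available. Second, even granting a passage to the completion $\widehat{\stW}$, triviality of the relative inertia on all $\stW_n$ does not formally yield triviality on an \'etale neighborhood: the completion is not a limit of \'etale neighborhoods (the henselization is), and affineness over $\widehat{\stW}$ does not descend to the henselization. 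Finally, the property you need to propagate from $\stW_0$ is not one for which Artin approximation/algebraization is set up; the actual mechanism is the one behind \cite[Prop.~12.5]{alper-hall-rydh_etale-local-stacks}: quasi-finiteness of $I_{\stW/\stX}$ near $\stW_0$ (your semicontinuity step), then finiteness and triviality over a \emph{saturated} open neighborhood using the adequate moduli space. You cite that proposition only as a pointwise statement over residual gerbes, but it applies directly to closed substacks of fundamental stacks, which is exactly how the paper uses it.

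Concretely, the step you are missing is the reduction of the pro-affine-immersion to a finitely presented closed immersion. Writing $\stW_0=\cap_\lambda \stW_\lambda$ with eventually affine transition maps, for $\beta\gg 0$ one can arrange (by \cite[Thm.~C]{rydh_noetherian-approx} and \cite[Prop.~15.3]{alper-hall-rydh_etale-local-stacks}, the latter being where \ref{Cond:PC}/\ref{Cond:N}/\ref{Cond:FC} enter --- to keep $\stW_\beta$ linearly fundamental, not to ``control the inertia'' as you suggest) that $\stW_\beta$ is linearly fundamental and $\stW_\beta\to\stX$ is representable. Factoring $\stW_\beta\inj\stZ\subseteq\stW$ as a closed immersion into an open substack, one applies \Cref{T:local-structure:non-noeth} to $\stW_\beta\inj\stZ$ to get a fundamental \'etale neighborhood $\stW'\to\stZ$ of $\stW_\beta$, and then \cite[Prop.~12.5]{alper-hall-rydh_etale-local-stacks} applied to the pair $(\stW',\stW'_\beta)$ finishes the proof. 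With this reduction in place, no completion or approximation over a henselian pair is needed.
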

\begin{theorem}[Refinement 2]\label{T:refinement2}
Let $\stW$ be a fundamental stack and $\stW_0\inj \stW$ be a
pro-affine-immersion. Assume that $\stW_0$ is linearly fundamental and
that either $\stW_0$ satisfies \ref{Cond:PC}, \ref{Cond:N}, or $\stW$
satisfies \ref{Cond:FC}. Then there exists an \'etale neighborhood
$\stW'\to \stW$
of $\stW_0$ such that
\begin{enumerate}
\item $\stW'$ is linearly fundamental.
\item If $\stW_0=[\Spec A_0/G_0]$, where $G_0$ is a linearly
  reductive (resp.\ nice)
    and embeddable group scheme over the good moduli
  space $W_0$, then we can arrange so that $\stW'=[\Spec A/G]$, where $G$ is a
  linearly reductive (resp.\ nice) and embeddable group scheme over the good
  moduli space $W'$, such that $G|_{W_0} \cong G_0$.
\end{enumerate}
\end{theorem}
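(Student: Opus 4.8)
The plan is to bootstrap from Refinement 1 and the already-established local structure theorems, deforming a presentation of $\stW_0$ outward to an étale neighborhood. First I would apply the étale local structure theory (the non-noetherian companion of \Cref{T:local-structure:excellent}, together with \cite[Prop.~12.5, Cor.~17.4]{alper-hall-rydh_etale-local-stacks}) to the identity map $\stW_0 \to \stW_0$: since $\stW_0$ is linearly fundamental, it already has the form $[\Spec A_0/\GL_{n,W_0}]$ for some $n$, or, under the nice/\ref{Cond:PC}/\ref{Cond:N}/\ref{Cond:FC} hypotheses, the sharper form $[\Spec A_0/G_0]$ with $G_0 \to W_0$ linearly reductive (resp.\ nice) and embeddable over the good moduli space $W_0$. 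The key point is that $W_0 \inj \stW$ is a pro-affine-immersion into a \emph{fundamental} stack, so $\stW$ carries an affine morphism $\stW \to B\GL_{N,\ZZ}$; I would use this to realize the group scheme $G_0$ and the coherent data cutting out $\Spec A_0$ as restrictions of analogous data over a neighborhood.

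The main engine is \Cref{T:affine-etale-nbhds} (and, in the group-scheme-valued setting, the deformation of embeddable linearly reductive/nice group schemes along nilpotent — hence along pro-nilpotent, i.e.\ pro-affine-immersion — thickenings, which is where the excellent/henselian-pair input enters). Concretely, for part (1) I would: (a) choose a fundamental presentation $\stW \to B\GL_{N}$; (b) base-change along the good moduli space and use that the pushforward of $\oh_{\stW}$ to the good moduli space of a neighborhood is finitely generated to produce, after an étale shrinking, a linearly fundamental $\stW'$; here one invokes that linear fundamentality is detected on the good moduli space and is an open-type condition after the approximation supplied by Artin algebraization for pairs. For part (2), I would lift $G_0 \to W_0$ to a linearly reductive (resp.\ nice) embeddable group scheme $G \to W'$ over the good moduli space of an étale neighborhood — using that such group schemes are rigid along pro-nilpotent thickenings and that embeddability is preserved — and then lift the $G_0$-action on $\Spec A_0$ to a $G$-action on some $\Spec A$ by the smoothness of the relevant deformation/obstruction groupoid, shrinking $\stW'$ once more so that $\stW' \cong [\Spec A/G]$ with $G|_{W_0} \cong G_0$. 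Throughout, compatibility of the various étale shrinkings is arranged by taking a common refinement, which remains an étale neighborhood of $\stW_0$ since $\stW_0$ is quasi-compact in the relevant cases.

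The hard part will be the group-scheme lifting in (2): producing $G \to W'$ linearly reductive (resp.\ nice) \emph{and embeddable} over the good moduli space, rather than merely a fundamental presentation with structure group $\GL_N$. This requires knowing that the moduli of such group schemes is unobstructed along the thickening $W_0 \inj \widehat{W}$ and then algebraizing via \Cref{T:artin-algebraization-intro} applied to the stack classifying (group scheme, action, quasi-coherent sheaf of algebras) data — exactly the kind of formal-versality-to-algebraization passage the main theorem is designed for — and finally checking that the resulting $G$ stays embeddable, which is where the hypotheses \ref{Cond:PC}, \ref{Cond:N}, or \ref{Cond:FC} are genuinely used, via \cite{hall-rydh_alg-groups-classifying}. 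A secondary subtlety is bookkeeping the "resp." cases uniformly (linearly reductive vs.\ nice) so that the niceness of $G_0$ — automatic in positive characteristic by \cite{nagata_complete-reducibility} and \cite[Thm.~1.2]{hall-rydh_alg-groups-classifying} — is transported along with the group scheme.
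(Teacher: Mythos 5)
There is a genuine gap: you treat the pro-affine-immersion $\stW_0\inj\stW$ as though it were a (pro-)nilpotent thickening, and your whole deformation-theoretic engine hangs on that conflation. A pro-affine-immersion is a cofiltered intersection $\stW_0=\cap_\lambda\stW_\lambda$ of locally closed immersions with eventually affine transition maps (e.g.\ the residual gerbe of a non-closed point); it is not an inverse limit of square-zero extensions of $\stW_0$, so there are no infinitesimal neighborhoods of $\stW_0$ in $\stW$ along which to deform $G_0$ or the algebra $A_0$, and no completion $\widehat{W}$ along $W_0$ to which \Cref{T:artin-algebraization-intro} could be applied. The indispensable first step — which the paper takes, following the pattern of \Cref{T:local-structure:non-noeth:pro-immersion} and \Cref{T:refinement1} — is to use standard limit methods to find an index $\beta$ such that $\stW_\beta$ is linearly (resp.\ nicely) fundamental and the presentation $[\Spec A_0/G_0]$ descends to $[\Spec A_\beta/G_\beta]$ with $G_\beta$ linearly reductive (resp.\ nice); this is exactly where the hypotheses \ref{Cond:PC}, \ref{Cond:N}, or \ref{Cond:FC} enter, via \cite[Prop.~15.3]{alper-hall-rydh_etale-local-stacks}. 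One then factors $\stW_\beta\inj\stZ\subseteq\stW$ as a closed immersion into an open substack and applies \Cref{T:local-structure:non-noeth} to that \emph{closed} immersion. Only after this reduction does any formal/deformation-theoretic argument make sense.

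A secondary point: even granting the reduction to a closed immersion, the paper does not re-run Artin algebraization for the group-scheme data as you propose; it simply invokes \cite[Props.~16.11, 16.12 and 16.14]{alper-hall-rydh_etale-local-stacks}, which already extend the linearly fundamental property, vector bundles (equivalently morphisms to $B\GL_n$), and presentations $[\Spec A_0/G_0]$ from a linearly fundamental closed substack to a fundamental \'etale neighborhood. Your plan to algebraize a moduli problem of triples (group scheme, action, algebra) is not obviously wrong in spirit, but as written it is a sketch of how one might reprove those propositions rather than a proof, and the unobstructedness and embeddability claims you defer to "the hard part" are precisely the content being assumed. If you repair the first step and then either cite or carefully prove the three extension results, the argument closes.
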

\subsection*{Application: Henselizations}

The {\it henselization} of an algebraic stack $\stX$ along a morphism $\nu \co \stW \to \stX$ is an initial object
in the 2-category of 2-commutative diagrams
\[
  \xymatrix{
  \stW \ar[r] \ar[rd]_{\nu}   & \stX' \ar[d]^f \\
  & \stX,
}
\]
where $f \co \stX' \to \stX$ is pro-\'etale.  Recall that $f \co \stX' \to \stX$ is called {\it pro-\'etale} if it is an inverse limit of quasi-separated \'etale
neighborhoods $\stX_\lambda\to \stX$ such that the transition maps
$\stX_\lambda\to \stX_\mu$ are affine for all sufficiently large $\lambda\geq
\mu$. Note that we do not require that $f$ is representable or separated.

\begin{theorem}[Existence of Henselizations]\label{T:henselization}
  Let $\stX$ be a quasi-separated algebraic stack with affine stabilizers. Let $\nu \co \stX_0 \inj \stX$ either be the inclusion of a closed substack satisfying \ref{Cond:PC}, \ref{Cond:N}, or
  \ref{Cond:FC}; or the inclusion of a residual gerbe.  If $\stX_0$ is linearly fundamental, then the henselization $\stX_{\nu}^{h}$ of $\stX$ along $\nu$ exists.  Moreover, $\stX_{\nu}^{h}$ is linearly fundamental and $(\stX_{\nu}^{h}, \stX_0)$ is a henselian pair.
\end{theorem}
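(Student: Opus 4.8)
The plan is to construct the henselization as an inverse limit of étale neighborhoods, using the local structure results as the engine to produce the neighborhoods and the refinements to control their geometry. First I would reduce to a local situation: since the statement is about a neighborhood of $\stX_0$, and $\stX_0$ is linearly fundamental, I would apply \Cref{T:local-structure:excellent}(1) (or \Cref{T:local-structure:non-closed} in the residual-gerbe case, together with the fact — noted in the excerpt — that both $\stX_0 \inj \stX$ and $\stG_x \inj \stX$ are pro-affine-immersions) to the identity morphism $f_0 = \id \co \stX_0 \to \stX_0$. This produces an étale morphism $f \co \stW \to \stX$ with $\stW$ fundamental and $f|_{\stX_0} \simeq \id_{\stX_0}$; in particular $\stX_0$ sits inside $\stW$ as a closed substack (or pro-affine-immersion), and by hypothesis it is linearly fundamental and satisfies \ref{Cond:PC}, \ref{Cond:N}, or \ref{Cond:FC}. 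Applying \Cref{T:refinement2} to $\stW_0 = \stX_0 \inj \stW$, I may shrink $\stW$ further so that $\stW$ itself is linearly fundamental. Thus, after replacing $\stX$ by this étale neighborhood (which is harmless for the construction of a henselization, since an étale neighborhood of an étale neighborhood of $\stX_0$ is again one of $\stX$), I am reduced to the case where $\stX$ is linearly fundamental and $\stX_0 \inj \stX$ is a closed substack (resp. residual gerbe) that is linearly fundamental.

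Next I would construct $\stX_\nu^h$ as the inverse limit over the cofiltered category of quasi-separated étale neighborhoods $\stX_\lambda \to \stX$ of $\stX_0$ with affine transition maps for large $\lambda$. To make sense of this limit and to verify the universal property, the key point is that this category is cofiltered and essentially small: given two étale neighborhoods, their fiber product over $\stX$ is an étale neighborhood refining both, and one can cofinally restrict to a small set of such neighborhoods (e.g. those that are linearly fundamental of bounded presentation, using the local structure theorem to guarantee a cofinal supply). The limit $\stX_\nu^h := \varprojlim_\lambda \stX_\lambda$ then exists as an algebraic stack by the standard limit formalism for stacks with affine transition maps (the limit of linearly fundamental stacks along affine transition maps is linearly fundamental — this is where I would invoke the relevant permanence properties of linearly fundamental stacks with respect to such limits, together with the observation that $\stX_0$ is already linearly fundamental so it survives to the limit). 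The map $\stW = \stX_0 \to \stX_\nu^h$ is induced compatibly from the maps $\stX_0 \to \stX_\lambda$.

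To verify the universal property, I would take an arbitrary 2-commutative triangle with $f \co \stX' \to \stX$ pro-étale, write $\stX' = \varprojlim_\mu \stX'_\mu$ with $\stX'_\mu \to \stX$ quasi-separated étale and affine transition maps, and note that the given map $\stX_0 \to \stX'$ factors (after increasing $\mu$, by the finite-presentation of $\stX_0 \to \stX$ over a suitable base) through some $\stX'_\mu$; the fiber product $\stX_0 \times_{\stX'_\mu} \stX'_\mu$ localized near the image of $\stX_0$ gives an étale neighborhood of $\stX_0$ in $\stX$ through which the construction factors, yielding a map $\stX_\nu^h \to \stX'_\mu \to \stX'$ over $\stX$, unique up to unique 2-isomorphism by the cofinality. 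Finally, that $(\stX_\nu^h, \stX_0)$ is a henselian pair follows because every étale neighborhood of $\stX_0$ in $\stX_\nu^h$ admits a section — this is essentially the defining property of the limit, since such a neighborhood pulls back from some $\stX_\lambda$ and is then refined within the inverse system — combined with the characterization of henselian pairs via lifting of étale algebras (for stacks this is checked on a linearly fundamental presentation, reducing to the henselian pair statement for the good moduli space).

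The main obstacle I expect is not the formal limit argument but establishing that the category of étale neighborhoods admits a cofinal small subsystem consisting of \emph{linearly fundamental} stacks with \emph{affine} transition maps: this is exactly what \Cref{T:refinement1} and \Cref{T:refinement2} are for — Refinement 2 upgrades fundamental neighborhoods to linearly fundamental ones with a good moduli space presentation $[\Spec A/G]$, and Refinement 1 is needed to ensure that the transition maps between neighborhoods can be arranged to be affine (by making the comparison morphism $\stW' \to \stW$ representable/affine after a further étale shrinking). Verifying that these refinements can be applied compatibly across the whole inverse system, so that the limit genuinely lands in the class of linearly fundamental stacks and the pair $(\stX_\nu^h, \stX_0)$ is henselian, is the technical heart of the argument.
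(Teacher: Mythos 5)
Your proposal takes a genuinely different route from the paper, and the route you chose has a real gap exactly at the point you flag as "the technical heart" but then defer. Defining $\stX_\nu^h$ as $\varprojlim_\lambda \stX_\lambda$ over the category of all quasi-separated \'etale neighborhoods requires (i) that this $2$-category is cofiltered, and (ii) that it admits a small cofinal subsystem whose transition maps are \emph{affine}, since only then does the limit exist as an algebraic stack. Neither is formal. For (i), two maps of \'etale neighborhoods agreeing on $\stX_0$ need not be $2$-isomorphic without further shrinking, and the shrinking must be done compatibly with all the other objects of the system. For (ii), \Cref{T:refinement1} makes a \emph{single} morphism $\stW'\to\stW$ affine after replacing $\stW'$ by an \'etale neighborhood of $\stX_0$, but that replacement changes the object and hence all the other arrows into and out of it; arranging affineness simultaneously across a cofinal system is circular as stated. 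Moreover \Cref{T:refinement1} needs the target to have affine (or separated) diagonal, which a general \'etale neighborhood of $\stX_0$ in a merely quasi-separated $\stX$ need not have. Finally, the assertion that the henselian-pair property is "essentially the defining property of the limit" conceals the same difficulty: descending an \'etale neighborhood of $\stX_0$ in $\stX_\nu^h$ to finite level and then finding a section requires precisely the cofinality and $2$-filteredness you have not established.

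The paper avoids the inverse limit over all neighborhoods entirely. It applies \Cref{T:local-structure:non-noeth:pro-immersion} once to $f_0=\id_{\stX_0}$ to get a single fundamental \'etale neighborhood $\stW\to\stX$, passes to the adequate moduli space $\stW\to W$ (an affine scheme), henselizes the \emph{affine pair} $(W,W_0)$ using Raynaud's classical theorem (or $\Spec\oh_{W,w}^h$ in the residual-gerbe case), and sets $\stW^h:=\stW\times_W W^h$. This explicit candidate is then shown to be linearly fundamental by \cite[Thm.~13.7]{alper-hall-rydh_etale-local-stacks} (closed points of $\stW^h$ lie over $W_0$, hence have linearly reductive stabilizers --- note the paper does \emph{not} need \Cref{T:refinement2} here), the pair $(\stW^h,\stW_0)$ is henselian by \cite[Thm.~3.6]{alper-hall-rydh_etale-local-stacks}, and the universal property reduces to the statement that every quasi-separated \'etale neighborhood of $\stW_0$ in $\stW^h$ has a section, which is \cite[Prop.~16.4]{alper-hall-rydh_etale-local-stacks} --- a substantive input, not a formal consequence of a limit construction. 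If you want to salvage your approach, you would essentially have to reprove that proposition, at which point the moduli-space construction is both shorter and gives the linear fundamentality of $\stX_\nu^h$ for free.
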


When $\stX$ is an affine scheme, then \Cref{T:henselization} is
\cite[Ch.~XI, Thm.~2]{raynaud_hensel_rings}.  The result is new for non-affine schemes and algebraic spaces. It is also closely related to, but does not settle, conjectures of Greco and
Strano on henselian schemes~\cite[Conj.~A, B and C]{greco-strano}.

Note that there are no analogous results for open neighborhoods: there are
schemes with affine closed subschemes that do not admit affine
neighborhoods. Indeed, there is a separated scheme with two closed points that
does not admit an affine open neighborhood and such that the semi-localization
at the two points does not exist. See \Cref{A:Zariskification}.

\subsection*{Application: Ferrand pushouts}
As an application of \Cref{T:affine-etale-nbhds} we can prove that
Ferrand pushouts~\cite{ferrand_conducteur,temkin-tyomkin_Ferrand} exist for
algebraic spaces and algebraic stacks. In the affine case, these are
Milnor squares~\cite[\S 2]{milnor_intro-alg-K-theory} and it follows that
these are pushouts in the category of quasi-separated algebraic stacks.

\begin{theorem}[Existence of Ferrand pushouts]\label{T:Ferrand}
Consider a diagram
\[
\xymatrix{%
\stX_0 \ar@{^(->}[r]^{i}\ar[d]_{f} & \stX\\
\stY_0
}%
\]
of quasi-separated algebraic stacks where $i$ is a closed immersion and
$f$ is affine. Then the pushout $\stY$ exists in the category of
quasi-separated algebraic stacks and is a geometric pushout.
If $\stX_0$, $\stY_0$ and $\stX$ are
Deligne--Mumford stacks (resp.\ algebraic spaces, resp.\ affine schemes), then so is $\stY$.
\end{theorem}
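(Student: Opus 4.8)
The plan is to construct $\stY$ by gluing, using the existence of affine étale neighborhoods (Theorem~\ref{T:affine-etale-nbhds}) to reduce the construction to the affine case, where Ferrand pushouts are the classical Milnor squares. First I would record the affine case: if $\stX_0 = \Spec A_0$, $\stX = \Spec A$, $\stY_0 = \Spec B_0$ with $A \surj A_0$ and $A_0 \to B_0$, then $B := A \times_{A_0} B_0$ is the fiber-product ring, $\stY := \Spec B$, and one checks (as in Milnor, and Ferrand) that $\stY_0 \inj \stY$ is a closed immersion with ideal $\ker(B\to B_0)\cong\ker(A\to A_0)$, that $\stX \to \stY$ is affine, and that the square is both a pushout and a pullback (``geometric pushout'': pushout that remains a pushout after any flat base change $\stY'\to\stY$, and $\stX_0 = \stX\times_\stY\stY_0$). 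Then I would observe that forming $B$ commutes with étale base change on $\Spec B$ — this is the key compatibility that makes gluing work, and it is exactly the content of the Milnor-square descent statements: an étale $\stY$-algebra is equivalent to a compatible pair of an étale $\stX$-algebra and an étale $\stY_0$-algebra agreeing over $\stX_0$.

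Next, for the general (stacky) case I would build $\stY$ as a category fibered in groupoids whose $T$-points, for $T$ affine, are triples $(T\to \stY_0,\ U\to \stX,\ \sigma)$ where $U := T\times_{\stY_0}\stY_0$... more precisely I would define $\stY$ by the expected universal property and then prove algebraicity by producing a smooth (indeed étale) presentation. Concretely: choose a smooth presentation $V_0\to\stY_0$ by an affine scheme; pull back along $f$ to get the affine scheme $U_0 := V_0\times_{\stY_0}\stX_0\to\stX_0$, which is affine over $\stX_0$, hence of the form $\Spec$ of a quasi-coherent algebra; apply Theorem~\ref{T:affine-etale-nbhds} (with $W_0 = U_0$, the closed immersion $\stX_0\inj\stX$) to extend $U_0\to\stX_0$ to an \emph{affine} $U\to\stX$ with $U\times_\stX\stX_0 = U_0$. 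Now I have, locally, an affine Milnor square $U_0\inj U$, $U_0\to V_0$, and I set $V := U\times_{U_0}V_0$ (the affine Ferrand pushout from the first step). The scheme $V$ should be a smooth atlas for the putative $\stY$; its two projections $V\times_\stY V$ are computed by the same fiber-product construction applied to $V_0\times_{\stY_0}V_0\inj $ (the relation), using that the relation $R_0\rightrightarrows V_0$ of the groupoid presenting $\stY_0$ pulls back and extends compatibly — here one uses uniqueness in Theorem~\ref{T:affine-etale-nbhds} up to the relevant neighborhood, or rather that the extensions can be chosen functorially on the nerve because affine extensions of a given affine morphism over $\stX_0$ are essentially unique after refining. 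This produces a smooth groupoid in algebraic spaces whose quotient stack is $\stY$, giving algebraicity and quasi-separatedness; the closed immersion $\stX_0\inj\stY_0\to$ gives $\stY_0\inj\stY$, and affineness of $\stX\to\stY$ and the geometric-pushout property are checked smooth-locally on $\stY$, where they reduce to the affine case.

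Finally I would address the descent of the geometric structure: that $\stY$ is Deligne--Mumford (resp.\ an algebraic space, resp.\ affine) when $\stX_0,\stY_0,\stX$ are. For the affine statement, $\stY = \Spec(A\times_{A_0}B_0)$ directly. For algebraic spaces and DM stacks, the diagonal of $\stY$ is computed from the diagonals of $\stY_0$ and $\stX$ by the same fiber-product-of-rings recipe (the diagonal $\stY\to\stY\times\stY$ base-changes, on an affine chart, to an affine Milnor square), so unramifiedness/representability of the diagonal is inherited; alternatively one notes the atlas $V$ constructed above is a scheme with $V\times_\stY V$ a scheme when the inputs are DM, whence $\stY$ is DM, and is an algebraic space when the inputs are. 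The main obstacle I anticipate is the bookkeeping in the groupoid step: ensuring the affine extensions $U\to\stX$ of $U_0\to\stX_0$ provided by Theorem~\ref{T:affine-etale-nbhds} can be made compatible with the two projections and the composition of the smooth groupoid presenting $\stX_0$ (so that one genuinely gets a groupoid object, not merely a simplicial object up to coherent homotopy), and that the resulting quotient is insensitive to the choices — this is where one must either invoke a suitable uniqueness/2-functoriality refinement of affine étale neighborhoods, or argue directly that the fibered category defined by the universal property is already algebraic by verifying Artin's axioms, using Theorem~\ref{T:affine-etale-nbhds} only to produce versal charts.
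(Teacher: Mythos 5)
Your overall strategy is the paper's: reduce to the affine case (Milnor squares), choose a smooth affine presentation of $\stY_0$, pull it back to $\stX_0$ along the affine morphism $f$, extend it across the closed immersion into $\stX$ using \Cref{T:affine-etale-nbhds}, and assemble the chartwise affine pushouts into a smooth groupoid presenting $\stY$ (the paper proves the more precise \Cref{T:geometric-pushout}). Your worry about making the extensions compatible with the groupoid structure is also resolved exactly as you guess: the paper first proves that a geometric pushout is $2$-cocartesian among quasi-separated stacks (\Cref{lem:sch_pushouts1} --- which itself uses \Cref{T:affine-etale-nbhds} a second time, to lift \'etale charts when descending a morphism to the pushout), and then the structure maps of the groupoid of pushouts are \emph{uniquely induced} by this universal property, so no coherence choices are needed.

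The one step in your sketch that would fail as written is ``$V\times_{\stY}V$ is computed by the same fiber-product construction applied to $V_0\times_{\stY_0}V_0$.'' For a general quasi-separated $\stY_0$ the relation $V_0\times_{\stY_0}V_0$ is only a quasi-separated algebraic space (and its triple analogue only a stack with affine second diagonal), so the affine Milnor-square construction does not apply to it directly: to form the pushout of the relation you already need the theorem one level down. The paper repairs this with an induction on $d$, where $\Coll_d$ is the class of stacks with affine $d$-th diagonal: the $2$- and $3$-fold fiber products of the affine charts over objects of $\Coll_d$ land in $\Coll_{d-1}$, so the inductive hypothesis furnishes the pushouts $X_3^{uv}$, $X_3^{uvw}$, and \Cref{lem:sch_pushouts1} supplies the groupoid maps. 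With that induction inserted, your argument is the paper's. (Your treatment of the DM/space/affine cases also works; the paper argues slightly differently, noting that $(\stX\smallsetminus\stX_0)\amalg\stY_0\to\stY$ is a surjective monomorphism.)
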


\Cref{T:Ferrand} generalizes the main theorem of~\cite{temkin-tyomkin_Ferrand},
where certain pushouts of algebraic spaces are proven to exist.
\subsection*{Application: Nisnevich neighborhoods}

The following application is used in \cite{hoyois-krishna_vanishing} and is a simple consequence of the local structure at non-closed points (\Cref{T:local-structure:non-closed}).

\begin{theorem}[Nisnevich neighborhoods of stacks with nice stabilizers]
\label{T:nisnevich-neighborhoods}
Let $\stX$ be a quasi-compact and quasi-separated algebraic stack such that
every, not necessarily closed, point of $\stX$ has nice stabilizer group. Then
there is a Nisnevich covering $f\colon \stW\to \stX$, where $\stW$ is nicely
fundamental. That is,
\begin{enumerate}
\item $f$ is \'etale and for every, not necessarily closed, point $x\in |\stX|$
  the restriction $f|_{\stG_x}$ has a section.
\item $\stW$ admits an affine good moduli space $W$ and there is a nice
  embeddable group scheme $G\to W$ such that $\stW=[\Spec A/G]$.
\end{enumerate}
If $\stX$ has affine (resp.\ separated) diagonal, then we can arrange
that $f$ is affine (resp.\ representable).
\end{theorem}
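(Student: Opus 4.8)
The plan is to reduce to the local structure theorem at non-closed points (\Cref{T:local-structure:non-closed}) applied at each point of $\stX$, and then glue finitely many such charts into a Nisnevich covering. First I would fix a point $x \in |\stX|$ with residual gerbe $\stG_x$. Since every point of $\stX$ has nice stabilizer, $\stG_x$ is a gerbe over a field banded by a nice group scheme, hence nicely fundamental (a nice group scheme over a field is embeddable, and the gerbe is cohomologically affine because nice group schemes are linearly reductive); in particular $\stG_x$ is linearly fundamental. Take $f_0 = \id\colon \stW_0 = \stG_x \to \stG_x$, which is \'etale with $\stW_0$ linearly fundamental. By \Cref{T:local-structure:non-closed} there is an \'etale morphism $f_x \colon \stW_x \to \stX$ with $\stW_x$ fundamental and $f_x|_{\stG_x} \cong \id_{\stG_x}$; in particular $f_x|_{\stG_x}$ admits a section (the identity), so $x$ is in the image of $f_x$ and $f_x$ is "pointwise Nisnevich" at $x$.

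Next I would upgrade $\stW_x$ to a nicely fundamental stack and control the image. Apply \Cref{T:refinement2} to the pro-affine-immersion $\stG_x \inj \stW_x$: since $\stG_x$ has nice stabilizer it satisfies \ref{Cond:N}, so after replacing $\stW_x$ by an \'etale neighborhood of $\stG_x$ we may assume $\stW_x$ is linearly fundamental, and by part (2) of \Cref{T:refinement2} we may write $\stW_x = [\Spec A_x/G_x]$ with $G_x$ a nice embeddable group scheme over the good moduli space $W_x$ — here we use that the residual gerbe's band is nice. Shrinking $W_x$ to a quasi-compact open still containing the image of $x$, we may assume $W_x$ is affine (good moduli spaces of quasi-compact linearly fundamental stacks are affine), so $\stW_x$ is nicely fundamental. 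By quasi-compactness of $\stX$ the open substacks $f_x(\stW_x) \subseteq \stX$ cover $\stX$ (every point lies in some $f_x(\stW_x)$), so finitely many $x_1, \dots, x_r$ suffice; set $\stW = \coprod_{i=1}^r \stW_{x_i}$ and $f = \coprod f_{x_i} \colon \stW \to \stX$. Then $f$ is \'etale, and for every point $y \in |\stX|$ we have $y \in f_{x_i}(\stW_{x_i})$ for some $i$; I would then need to check $f_{x_i}|_{\stG_y}$ has a section, which follows because $f_{x_i}$ is \'etale and $y$ lifts to a point $y'$ of $\stW_{x_i}$ with the same (nice) stabilizer — an \'etale morphism induces an isomorphism on residual gerbes at a lift — and a disjoint union of nicely fundamental stacks is nicely fundamental (take $G = \prod G_{x_i}$ suitably, or argue componentwise since the definition allows the base $W = \coprod W_{x_i}$).

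For the last assertion, suppose $\stX$ has affine (resp.\ separated) diagonal. I would apply \Cref{T:refinement1} to each pair $\stG_{x_i} \inj \stW_{x_i}$ with $g = f_{x_i}$: since $f_{x_i}|_{\stG_{x_i}} \cong \id$ is representable (indeed a monomorphism), \ref{Cond:N} holds on $\stG_{x_i}$, and $\stW_{x_i}$ is fundamental, after passing to a further \'etale neighborhood of $\stG_{x_i}$ (still nicely fundamental, by re-running \Cref{T:refinement2} if necessary) we may assume $f_{x_i}$ is affine (resp.\ representable). Assembling, $f = \coprod f_{x_i}$ is then affine (resp.\ representable).

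The main obstacle I expect is the bookkeeping in the gluing step — ensuring that the finitely many charts can \emph{simultaneously} be taken nicely fundamental \emph{and} (in the last part) representable/affine over $\stX$, since invoking \Cref{T:refinement1} may shrink a chart and one must re-apply \Cref{T:refinement2} without losing the representability just gained. The cleanest route is to observe that both refinements only replace $\stW_{x_i}$ by an \'etale neighborhood of the \emph{same} residual gerbe $\stG_{x_i}$, the relevant hypotheses (\ref{Cond:N}, fundamental, the representability of $g|_{\stG_{x_i}}$) are preserved under such shrinking, and the constructions of \Cref{T:refinement2}(2) are compatible with passing to opens of the good moduli space; so one can apply the refinements in the order \Cref{T:refinement1} then \Cref{T:refinement2} and the properties persist. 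Everything else is a routine assembly of results already established in the excerpt.
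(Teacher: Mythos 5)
Your setup coincides with the paper's: apply \Cref{T:local-structure:non-closed} to the identity of each residual gerbe (each $\stG_x$ is nicely, hence linearly, fundamental by the niceness hypothesis), then use \Cref{T:refinement1} and \Cref{T:refinement2} to make each chart affine (resp.\ representable) and nicely fundamental. The paper applies the refinements in the order refinement~1 then refinement~2, exactly as you suggest at the end, so the bookkeeping concern you raise is resolved the way you propose.

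The genuine gap is in your finiteness step. You choose finitely many points $x_1,\dots,x_r$ so that the open images $f_{x_i}(\stW_{x_i})$ cover $|\stX|$, and then assert that for any $y$ in the image of $f_{x_i}$ the restriction $f_{x_i}|_{\stG_y}$ has a section because ``an \'etale morphism induces an isomorphism on residual gerbes at a lift.'' That assertion is false: at a lift $y'$ of $y$, an \'etale morphism induces a finite separable extension of residue fields and an open immersion of stabilizers, not an isomorphism of residual gerbes (already $\Spec \CC \to \Spec \mathbb{R}$, or $\Spec k \to BG$ with $G$ finite \'etale, are counterexamples). Covering $|\stX|$ topologically by \'etale charts therefore does not produce a Nisnevich covering; the entire content of the Nisnevich condition is the existence of a section over \emph{every} residual gerbe, and your construction only guarantees this at the chosen points $x_i$ themselves. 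The paper circumvents this by taking $\stW=\amalg_{x\in|\stX|}\stW_x$ over \emph{all} points of $|\stX|$ --- which is tautologically a quasi-separated (but not quasi-compact) Nisnevich covering, since every $y$ has its dedicated chart with $f_y|_{\stG_y}$ an isomorphism --- and then invoking \cite[Prop.~3.3]{MR3754421} to shrink $\stW$ to a quasi-compact stack: a Nisnevich covering of a quasi-compact quasi-separated stack admits a finite monomorphic splitting sequence, and such a sequence must factor through finitely many of the $\stW_x$, which then already form a Nisnevich covering. Some device of this kind is needed to obtain finiteness; the naive topological covering argument does not close the gap.
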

\begin{remark}
When $\stX$ is an algebraic stack with a good moduli space such that every
point of characteristic zero has an open neighborhood of characteristic zero,
then $\stX$ has a strong Nisnevich neighborhood of the form $[\Spec A/G]$ with $G$
linearly reductive~\cite[Thm.~13.1]{alper-hall-rydh_etale-local-stacks}.
Here strong means that the Nisnevich neighborhood is a pull-back from a
Nisnevich cover of the good moduli space.
Note that the condition that $\stX$ admits a good moduli space implies that every \emph{closed} point has linearly reductive stabilizer.  
\end{remark}
In the case of linearly reductive stabilizers at \emph{closed} points, we have
the following result.
\begin{theorem}[Nisnevich neighborhoods of stacks with linearly reductive stabilizers at closed points]\label{T:nisnevich-neighborhoods-LR}
  Let $\stX$ be a quasi-compact and quasi-separated algebraic stack
  with affine stabilizers and linearly reductive stabilizers at closed
  points.
  Assume that $\stX$ has separated (resp.\ quasi-affine, resp.\ affine)
  diagonal. Then there is a Nisnevich covering $f\colon [V/\GL_m] \to \stX$,
  where $V$ is a quasi-compact separated algebraic space
  (resp.\ quasi-affine scheme, resp.\ affine scheme).  In general, the
  morphism $f$ is not representable but if $\stX$ has affine diagonal we can
  also arrange so that $f$ is affine.
\end{theorem}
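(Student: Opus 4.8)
The plan is to assemble the cover from finitely many fundamental étale charts, one near each closed point, produced by \Cref{T:local-structure:non-closed} and sharpened by \Cref{T:refinement1}. First I would record that the residual gerbe $\stG_x$ at a closed point $x$ is linearly fundamental: it is cohomologically affine because the stabilizer at $x$ is linearly reductive, and it is fundamental because a reductive group over a field admits a closed embedding into some $\GL_n$ with affine homogeneous quotient, so $\stG_x\to B\GL_n$ is affine (this survives étale descent from a separable splitting field); moreover $\stG_x$ trivially satisfies \ref{Cond:FC}, having a single residue characteristic. Applying \Cref{T:local-structure:non-closed} to $f_0=\id_{\stG_x}$ then gives an étale morphism $g_x\colon\stW_x\to\stX$ with $\stW_x$ fundamental and $g_x|_{\stG_x}$ an isomorphism onto $\stG_x$. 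The residual gerbe of the point of $\stW_x$ over $x$ is a pro-affine-immersion into the fundamental stack $\stW_x$, is linearly fundamental, satisfies \ref{Cond:FC}, and $g_x$ restricts on it to an isomorphism, hence a representable morphism; so \Cref{T:refinement1}, used with the separated-diagonal hypothesis on $\stX$, produces an étale neighborhood $\stW'_x\to\stW_x$ of $\stG_x$ with $\stW'_x$ fundamental and $\stW'_x\to\stX$ representable — affine when $\Delta_\stX$ is affine, and (by the quasi-affine analogue) quasi-affine when $\Delta_\stX$ is quasi-affine. Replacing $\stW_x$ by $\stW'_x$, I may assume $g_x$ has this property and is still an isomorphism over $\stG_x$.

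Next I would invoke quasi-compactness: finitely many of the open images of the $g_{x_i}$ cover the closed points, hence all of $\stX$; fix such $x_1,\dots,x_r$ and write each fundamental chart as $\stW_i=[\Spec B_i/\GL_{n_i}]$. Choosing $m\geq\max_i n_i$ and block-diagonal embeddings $\GL_{n_i}\hookrightarrow\GL_m$, I rewrite $\stW_i\cong[V_i/\GL_m]$ with $V_i=\Spec B_i\times^{\GL_{n_i}}\GL_m$; since $V_i$ admits the affine fppf cover $\Spec B_i\times\GL_m$ (the total space of the $\GL_{n_i}$-torsor) it is an affine scheme, and the composite $V_i\to\stW_i\xrightarrow{g_i}\stX$ is representable (resp.\ quasi-affine, resp.\ affine). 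Setting $V:=\coprod_i V_i$ — a finite disjoint union, hence affine, in particular of whichever of the asserted types applies — I obtain $\stW:=[V/\GL_m]=\coprod_i\stW_i$ together with $f:=\coprod_i g_i\colon\stW\to\stX$, which is étale and, when $\Delta_\stX$ is affine, affine. Finally I would check the Nisnevich property: $f$ is étale and surjective, restricts to an isomorphism over every $\stG_{x_i}$, and therefore splits over the residual gerbe of every closed point; since $f$ is representable in a neighborhood of each of these points, the induced morphism on henselian local rings there is an isomorphism, so each generization lifts with trivial residual-gerbe extension and $f$ splits over $\stG_x$ for all $x\in|\stX|$.

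The main obstacle I expect is not a single step but the bookkeeping that ties the affine/quasi-affine/separated trichotomy for $\Delta_\stX$ to the corresponding conclusions for $f$ (and, in a version sharper than the one above, for $V$): one must supply the quasi-affine case of \Cref{T:refinement1}, whose stated form gives only the affine-versus-representable dichotomy, and one must verify that each étale refinement preserves the pro-affine-immersion property of the residual-gerbe inclusion so that \Cref{T:refinement1} keeps applying. A secondary point, used repeatedly above and worth isolating as a lemma, is that residual gerbes at closed points with linearly reductive stabilizer are linearly fundamental and satisfy \ref{Cond:FC}.
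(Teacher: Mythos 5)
There is a genuine gap in your final step. From an \'etale representable surjection $f\colon \stW\to\stX$ that restricts to an isomorphism over the residual gerbes of finitely many closed points, you conclude that $f$ splits over $\stG_y$ for \emph{every} point $y$, arguing via henselian local rings that "each generization lifts with trivial residual-gerbe extension." This is false: the henselization argument shows that generizations lift, but says nothing about the residue field (or residual gerbe) extension at the lift, because $\kappa(\tilde y)/\kappa(y)$ along $\Spec\sO_{X,x}^{h}\to\Spec\sO_{X,x}$ can be a nontrivial separable extension. A concrete counterexample already occurs for affine schemes: $\Spec\ZZ_{(5)}[i]\to\Spec\ZZ_{(5)}$ is \'etale, affine, surjective, and split over the closed point (since $5$ splits in $\ZZ[i]$), yet its fiber over the generic point is $\Spec\QQ(i)$, so it is not a Nisnevich covering. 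Thus the cover $\coprod_i\stW_i\to\stX$ you build is in general only an \'etale cover split at closed points, not a Nisnevich cover, and no amount of shrinking the charts fixes this.

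This is exactly the point where the paper's proof diverges from yours and does real work: starting from the representable \'etale cover $g\colon\stW\to\stX$ split at closed points (your first half agrees with this step, including the use of \Cref{T:local-structure:non-noeth} and \Cref{T:refinement1}), it passes to the symmetric powers $\coprod_{1\le d\le e}[\stW^d/S_d]\to\stX$, which \emph{is} Nisnevich because over a point where $g$ has rank $d$ the full unordered fiber gives a canonical rational point of $[\stW^d/S_d]$ (in the $\ZZ_{(5)}[i]$ example, $\ET^2$ acquires a $\QQ$-point over the generic point). Two further nontrivial verifications are then needed, both absent from your proposal: (i) that $[\stW^d/S_d]$ is again of the form $[V/\GL_N]$ with $V$ separated/quasi-affine/affine according to $\Delta_\stX$, which the paper does via the frame bundle of $p_*\sE$ for $p\colon\stW^d\to[\stW^d/S_d]$ finite \'etale and Stiefel manifolds $\GL_{dn}/(\GL_n)^d$; and (ii) that in the affine-diagonal case one must replace $[\stW^d/S_d]$ (which is only separated over $\stX$) by $\ET^d(\stW/\stX)=[\SEC^d(\stW/\stX)/S_d]$, where $S_d$ acts freely, to keep $f$ affine. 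Separately, your claim that $V=\coprod_i V_i$ is always affine is not justified (an affine faithfully flat morphism from an affine scheme does not force the target to be affine, cf.\ $\GL_2\to\mathbb{P}^1$), and is in any case stronger than the theorem asserts in the separated and quasi-affine cases; but the decisive error is the Nisnevich verification.
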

When $\stX$ has affine diagonal, the Nisnevich covering is fundamental but not
always linearly fundamental. If $\stX$ is the stack quotient of the
non-separated affine line by $\ZZ/2\ZZ \times
\mathbb{G}_m$~\cite[Ex.~5.2]{alper-hall-rydh_luna-stacks}, then the unique
closed point has stabilizer $\mathbb{G}_m$ whereas the open point has
stabilizer $\ZZ/2\ZZ$. Every Nisnevich covering will thus have a point with
stabilizer $\ZZ/2\ZZ$ and such stacks are not linearly fundamental in
characteristic $2$.
%% Lemma: If $\stX$ is linearly fundamental, then every point with geometrically
%% reductive stabilizer is linearly reductive.
%% Proof: Let $x\in |\stX|$ be a point with geometrically reductive stabilizer
%% $G$. Since $BG$ is fundamental and $\stX$ has affine diagonal, the induced
%% map $BG\to \stX$ is
%% affine~\cite[Prop.~12.5]{alper-hall-rydh_etale-local-stacks}. Since $\stX$
%% is linearly fundamental, it follows that $BG$ is linearly fundamental.
%% That is, $G$ is linearly reductive.

\subsection*{Application: Compact generation}
Let $\stX$ be a quasi-compact and quasi-separated algebraic stack and
consider its unbounded derived category of $\sO_\stX$-modules with
quasi-coherent cohomology sheaves $\DQCOH(\stX)$. A vexing question over
the years has been whether the category $\DQCOH(\stX)$ is \emph{compactly
  generated}. In this situation, this is equivalent to finding a set
of perfect complexes $\{P_\lambda\}_{\lambda\in \Lambda}$ on $\stX$ such that
\begin{enumerate}[label=(\alph*)]
\item if $M \in \DQCOH(\stX)$ and $\Hom_{\sO_\stX}(P_\lambda,M) = 0$ for all
$\lambda \in \Lambda$, then $M = 0$; and
\item the functor
  $\Hom_\stX(P_\lambda,-) \colon \DQCOH(\stX) \to \mathsf{Ab}$ preserves
  small coproducts for all $\lambda\in \Lambda$.
\end{enumerate}
For schemes, definitive positive results go back to the pioneering work of
\cite{MR1106918,MR1308405}. For a thorough discussion on the
subtleties of this question for algebraic stacks, we refer the
interested reader to
\cite{hall-rydh_perfect-complexes,hall-neeman-rydh_no-compacts}.

A lot of progress was made on this question for stacks in \cite[Thm.\
5.1]{alper-hall-rydh_luna-stacks} and \cite[Prop.\
14.1]{alper-hall-rydh_etale-local-stacks}, however. More precisely, \cite[Thm.\
5.1]{alper-hall-rydh_luna-stacks} established compact generation
provided that $\stX$ had affine diagonal and the identity component
$G_x^0$ of the stabilizer groups $G_x$ of $\stX$ at all points $x$ of $\stX$
were of multiplicative type. It was shown in \cite[Thm.\
1.1]{hall-neeman-rydh_no-compacts}, however, that if $\stX$ had a point of positive characteristic
$y$ such that the reduced identity component $(G_y)_{\mathrm{red}}^0$ was not a torus, then
$\DQCOH(\stX)$ was not compactly generated. In the following theorem we eliminate 
this discrepancy and give the following characterization of
algebraic stacks in positive characteristic that have compactly
generated derived categories.
\begin{theorem}\label{T:compact-generation-derived-category}
  Let $\stX$ be a quasi-compact algebraic stack with
  affine diagonal satisfying \ref{Cond:PC}. The following conditions
  are equivalent.
  \begin{enumerate}
  \item\label{TI:compact-generation-derived-category:crisp} $\stX$ is
    $\aleph_0$-crisp \cite[Defn.\ 8.1]{hall-rydh_perfect-complexes}.
  \item\label{TI:compact-generation-derived-category:support}
    $\DQCOH(\stX)$ is compactly generated and for every closed subset
    $Z \subseteq |\stX|$ with quasi-compact complement, there exists a
    perfect complex $P$ on $\stX$ with $\mathrm{supp}(P) = Z$.
  \item\label{TI:compact-generation-derived-category:generated} $\DQCOH(\stX)$ is compactly generated.
  \item\label{TI:compact-generation-derived-category:points} For every point $x$ of $\stX$, the reduced identity
    component $(G_x)_{\mathrm{red}}^0$ of the stabilizer $G_x$ at $x$
    is a torus.
  \item\label{TI:compact-generation-derived-category:closed-points} For every closed point $x$ of $\stX$, the reduced identity
    component $(G_x)_{\mathrm{red}}^0$ of the stabilizer $G_x$ at $x$
    is a torus.
  \end{enumerate}
\end{theorem}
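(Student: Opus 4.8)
The plan is to prove the cycle (4)~$\Rightarrow$~(1)~$\Rightarrow$~(2)~$\Rightarrow$~(3)~$\Rightarrow$~(5)~$\Rightarrow$~(4). The implication (1)~$\Rightarrow$~(2) is essentially the content of the definition of $\aleph_0$-crispness: for a quasi-compact stack it is arranged precisely so that the Thomason--Neeman localization machinery applies, which yields at once that $\DQCOH(\stX)$ is compactly generated and that every closed subset with quasi-compact complement is the support of a perfect complex (cf.\ \cite[\S8]{hall-rydh_perfect-complexes}); and (2)~$\Rightarrow$~(3) is trivial. For (3)~$\Rightarrow$~(5) we invoke the obstruction theorem of \cite{hall-neeman-rydh_no-compacts}: by \ref{Cond:PC} every closed point of $\stX$ has positive characteristic, so if $(G_x)^0_{\mathrm{red}}$ were not a torus at some closed point $x$, then $\DQCOH(\stX)$ would fail to be compactly generated.

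For (5)~$\Rightarrow$~(4), let $x\in|\stX|$ be arbitrary; since $\stX$ is quasi-compact, $x$ specializes to a closed point $x_0$. Passing to a trait through $x_0$ and $x$ and taking the scheme-theoretic closure of its generic inertia inside the inertia over the trait (which is affine, by the hypothesis on the diagonal) exhibits $G_x$ as a flat degeneration of a closed subgroup scheme $H\le G_{x_0}$. By (5), $H^0_{\mathrm{red}}$ is a connected smooth subgroup of the torus $(G_{x_0})^0_{\mathrm{red}}$, hence a subtorus; a short argument with torsion points --- or with the local structure theorem near $x_0$ --- then rules out the remaining degenerations and forces $(G_x)^0_{\mathrm{red}}$ itself to be a torus.

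The substance of the theorem is (4)~$\Rightarrow$~(1), and here only the hypothesis on closed points is used. Fix a closed point $x_0$ and put $T:=(G_{x_0})^0_{\mathrm{red}}$, a torus; then $G_{x_0}/T$ is finite over the residue field, and since finite-type group schemes over a field are local complete intersections and $G_{x_0}\to G_{x_0}/T$ is a smooth $T$-torsor, $G_{x_0}/T$ is lci over the residue field. Hence the morphism $\stW_0\to\stG_{x_0}$ induced by $T\inj G_{x_0}$, where $\stW_0$ is linearly fundamental (indeed a gerbe banded by the torus $T$), is syntomic --- and \'etale in characteristic zero. By \ref{T:local-structure:non-closed} it extends to a syntomic morphism $f_{x_0}\colon\stW^{(x_0)}\to\stX$ with $\stW^{(x_0)}$ fundamental and restricting to $\stW_0\to\stG_{x_0}$ over $x_0$. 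A closed point of $\stW^{(x_0)}$ lying in the closed fibre $\stW_0$ has linearly reductive stabilizer (a torus), so by the \'etale slice theorem \cite{alper-hall-rydh_etale-local-stacks} we may shrink $\stW^{(x_0)}$ to an \'etale neighborhood of the form $[\Spec A_{x_0}/T']$ with $T'$ a torus; this stack is cohomologically affine and fundamental with stabilizers of multiplicative type, i.e.\ nicely fundamental, hence $\aleph_0$-crisp by the multiplicative-type case \cite[Thm.~5.1]{alper-hall-rydh_luna-stacks}, \cite[Prop.~14.1]{alper-hall-rydh_etale-local-stacks}. Moreover $[\Spec A_{x_0}/T']\to\stX$ is syntomic with open image containing $x_0$; as these images cover all closed points of $\stX$ they cover $\stX$, so by quasi-compactness finitely many charts $\pi_i\colon[\Spec A_i/T_i]\to\stX$ suffice.

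It then remains to descend $\aleph_0$-crispness along the quasi-compact syntomic surjection $\coprod_i[\Spec A_i/T_i]\to\stX$, using that it has finite tor-dimension and, Nisnevich-locally on $\stX$, admits a section (over each residual gerbe the pullback is a gerbe, which acquires a section after an \'etale refinement): this is within reach of the descent results for crispness of \cite[\S8--9]{hall-rydh_perfect-complexes}, with \cite[Prop.~14.1]{alper-hall-rydh_etale-local-stacks} extended from the \'etale to the syntomic case by the present local structure and henselization theorems. The main obstacle is precisely this step together with the construction of the charts: condition (4) is strictly weaker than requiring nice stabilizers at all points --- already $B\alpha_p$ in characteristic $p$ satisfies (4) but not this --- so the global Nisnevich presentations of \ref{T:nisnevich-neighborhoods} and \ref{T:nisnevich-neighborhoods-LR} are unavailable, and the device is to absorb the possibly non-linearly-reductive finite (or infinitesimal) part of $G_{x_0}$ into a \emph{syntomic}, rather than \'etale, chart via \ref{T:local-structure:non-closed}, after which the stabilizers that control compact generation have been reduced to tori. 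Checking that crispness genuinely descends along such syntomic charts --- rather than merely reassembling compact generators and the support condition by hand --- is the delicate point.
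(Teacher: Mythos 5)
Your reductions (1)$\Rightarrow$(2)$\Rightarrow$(3), the use of \cite{hall-neeman-rydh_no-compacts} for (3)$\Rightarrow$(5), and the core idea for the hard implication --- absorbing the non-nice part of $G_{x_0}$ into a \emph{syntomic} chart $BT\to\stG_{x_0}\inj\stX$ via the local structure theorem, so that the resulting charts are nicely fundamental and hence $\aleph_0$-crisp --- all match the paper. But the step you yourself flag as the main obstacle, descending $\aleph_0$-crispness along the resulting cover, is a genuine gap, and the missing idea is not a syntomic extension of the descent machinery. The point is that $f_0^{x_0}\colon BH'\to\stG_{x_0}$ is \emph{finite}: $H/H'$ is finite because $H'$ is (a form of) the reduced identity component. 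Since the chart $\stW^{x_0}$ is fundamental and its restriction over $\stG_{x_0}$ is finite, one may shrink it so that $f^{x_0}$ is quasi-finite \cite[Lem.~3.1]{alper-hall-rydh_luna-stacks}, and then, using that $\stX$ has affine diagonal, shrink further so that $f^{x_0}$ is affine \cite[Prop.~12.5]{alper-hall-rydh_etale-local-stacks}. After passing to a strictly \'etale neighborhood to make $\stW^{x_0}$ nicely fundamental and taking finitely many closed points, one obtains an \emph{affine, quasi-finite}, faithfully flat presentation $\stW\to\stX$, and the existing quasi-finite flat descent theorem for crispness \cite[Thm.~C]{hall-rydh_perfect-complexes} applies verbatim. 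No extension of \cite[Prop.~14.1]{alper-hall-rydh_etale-local-stacks} from \'etale to syntomic covers, and no Nisnevich-local sections, are needed; your proposal does not identify the finiteness of $f_0^{x_0}$ and therefore cannot close the argument.

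A second, smaller gap is your direct proof of (5)$\Rightarrow$(4) by degenerating $G_x$ along a trait to a subgroup of $G_{x_0}$: the assertion that "a short argument with torsion points \dots rules out the remaining degenerations" is precisely the nontrivial content (one must exclude, e.g., unipotent generic fibers whose flat limits sit inside a group with toral reduced identity component), and you give no proof. The paper avoids this entirely: once the affine, quasi-finite, surjective chart $\stW\to\stX$ with nicely fundamental source is in hand, every point of $\stX$ is hit by a point of $\stW$ whose stabilizer is a finite-index subgroup of its own, so the reduced identity components agree and (4) follows for free from the chart built for (5)$\Rightarrow$(1). I would also note, as a minor point, that $T=(G_{x_0})^0_{\mathrm{red}}$ and the identification $\stG_{x_0}\simeq BG_{x_0}$ are only available after finite extensions of $\kappa(x_0)$, which your construction of $\stW_0$ elides but the paper handles explicitly.
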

We will prove \Cref{T:compact-generation-derived-category}
immediately after the non-noetherian local structure
\Cref{T:local-structure:non-noeth}, and make use of the
refinements established in \cite{alper-hall-rydh_etale-local-stacks}.

\subsection*{Application: Local structure theorem of derived algebraic stacks}
We now come to the derived versions of our local structure results. Recall that a morphism $f$ of derived stacks is \emph{quasi-smooth}
if $f$ is locally of finite presentation and its cotangent complex $\LL_f$ has Tor-amplitude $\leq 1$.
This is the analogue of lci maps in derived algebraic geometry.

\begin{theorem}[Local structure of derived stacks]\label{T:local-structure:derived}
Let $\stX$ be a quasi-separated algebraic derived $1$-stack with affine stabilizers.
Let $\stX_0\inj \stX$ be a
closed substack and let $f_0\colon \stW_0\to \stX_0$ be a morphism with
$(\stW_0)_{\cl}$ linearly fundamental.  Assume one of the following conditions:
\begin{enumerate}
\item $\stW_0$ satisfies \ref{Cond:PC} or \ref{Cond:N}; or
\item $\stX_0$ satisfies \ref{Cond:FC}.
\end{enumerate}
Then
\begin{enumerate}[label=(\alph*)]
\item If $f_0$ is smooth (resp.\ \'etale), then there exists a smooth
  (resp.\ \'etale) morphism $f\colon \stW\to \stX$ such that $\stW$ is
  fundamental and $f|_{\stX_0}\cong f_0$.
\item Assume that $(\stX_0)_\cl$ has the resolution property. If $f_0$ is
  quasi-smooth then there exists a
  quasi-smooth morphism $f\colon \stW\to \stX$ such that $\stW$ is fundamental
  and $f|_{\stX_0}\cong f_0$ (here the restriction 
  denotes the derived pull-back).
\end{enumerate}
\end{theorem}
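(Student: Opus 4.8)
The plan is to deduce the derived local structure theorem from its classical counterpart (\Cref{T:local-structure:excellent}) by a deformation-theoretic argument, lifting the classical chart step by step along the infinitesimal thickenings of $\stX$ by its Postnikov/square-zero filtration, and then passing to the limit via derived Artin algebraization (\Cref{T:Artin-algebraization}). First I would record that the hypotheses on $\stW_0$ and $\stX_0$ are conditions on their classical truncations, so they are preserved under the operations we perform, and that a morphism of derived stacks is smooth/\'etale/quasi-smooth if and only if it is flat (resp.\ \'etale, resp.\ has cotangent complex of Tor-amplitude $\le 1$) with the corresponding property on classical truncations; this lets us import the classical output $f_{\cl}\colon \stW_{\cl}\to \stX_{\cl}$ from \Cref{T:local-structure:excellent}, after noting that $\stX_0\inj \stX$ induces a closed immersion $(\stX_0)_{\cl}\inj \stX_{\cl}$ and $f_0$ restricts to a classical morphism with the required properties.

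The heart of the argument is the obstruction theory. Write $\stX$ as a limit of its $n$-truncations $\stX^{(\le n)}$ (equivalently, thicken $\stX_{\cl}=\stX^{(\le 0)}$ by the tower whose successive square-zero extensions have kernel the shifted homotopy sheaves $\pi_n(\sO_\stX)[n]$). Given a chart $\stW^{(\le n)}\to \stX^{(\le n)}$ restricting to $f_0$ on the $n$-truncation of $\stX_0$, the obstruction to lifting it to a flat (resp.\ \'etale, resp.\ quasi-smooth) chart over $\stX^{(\le n+1)}$ compatible with $f_0$ lives in a cohomology group computed by $\LL_{\stW^{(\le n)}/\stX^{(\le n)}}$ (which vanishes for smooth/\'etale $f$) tensored with the kernel module, and the set of lifts, when nonempty, is a torsor under the corresponding $\mathrm{Ext}^0$; the compatibility with $f_0$ is enforced by working in the relative deformation theory of the pair $(\stW,\stW_0)\to(\stX,\stX_0)$, i.e.\ with $\LL$ of the morphism of pairs. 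For smooth and \'etale $f$ the obstruction groups vanish and lifts exist and are essentially unique, so one obtains $\stW^{(\le n)}$ for all $n$ and assembles $\stW=\lim_n \stW^{(\le n)}$, which is fundamental because each truncation is and fundamentality is detected on the classical truncation (an affine morphism to $B\GL_n$ deforms uniquely). For quasi-smooth $f$ one must also carry the data of a choice of presentation $\stW\to [\Spec B/\GL_N]$ realizing quasi-smoothness as a derived complete intersection; here the resolution property of $(\stX_0)_{\cl}$ is what guarantees that the relevant vector bundles and the global functions cutting out $\stW_0$ extend, and the $\ref{Cond:PC}/\ref{Cond:N}/\ref{Cond:FC}$ hypothesis is exactly what is needed for the cohomology groups controlling deformations of quotient presentations to behave well (these are the same hypotheses under which the refinements of \cite{alper-hall-rydh_etale-local-stacks} apply).

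Once the formal/pro-nilpotent lift is in hand, the remaining issue is algebraization: a priori $\stW$ is only a chart over the tower $\{\stX^{(\le n)}\}$, and we must produce an honest derived chart over $\stX$ itself when $\stX$ is not $n$-truncated for any $n$, or — in the setting where $\stX_0\inj\stX$ is merely a thickening in the classical direction as well — when the lift is only formal along $\stX_0$. This is where \Cref{T:Artin-algebraization} (the derived/stack version of Artin algebraization for pairs) enters: formal versality of the chart at $\stW_0$ follows from the smoothness/\'etaleness (resp.\ quasi-smoothness plus the torsor structure above), and algebraization converts the formal object into a morphism $\stW\to\stX$ of finite presentation agreeing with $f_0$ on infinitesimal neighborhoods, after which one checks flatness/\'etaleness/quasi-smoothness on the classical truncation using \Cref{T:local-structure:excellent} and the already-constructed $f_{\cl}$, and concludes by the fibral criterion. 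I expect the main obstacle to be the quasi-smooth case: keeping track of the deformation theory of the quotient presentation $\stW=[\Spec B/\GL_N]$ compatibly with $\stW_0=[\Spec B_0/\GL_N]$ while also deforming the regular sequence cutting out $\Spec B_0$ inside an ambient smooth derived scheme, and verifying that the obstructions genuinely lie in groups killed by the hypotheses \ref{Cond:PC}/\ref{Cond:N}/\ref{Cond:FC} together with the resolution property — i.e.\ reconciling the derived deformation theory with the cohomological vanishing results of \cite{alper-hall-rydh_etale-local-stacks} used for the classical statement.
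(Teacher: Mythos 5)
Your treatment of the smooth/\'etale case has the same skeleton as the paper's: import $f_{\cl}\colon \stW_{\cl}\to \stX_{\cl}$ from the classical theorem, then climb the Postnikov tower of $\stX$, with the obstruction to each lift living in $\Ext^2_{\stW_{\cl}}(\LL_{f_{\cl}},f_{\cl}^*\pi_n(\sO_\stX)[n])$, which vanishes because $f_{\cl}$ is smooth \emph{and} $\stW_{\cl}$ is cohomologically affine (a step you should make explicit: the paper first invokes \Cref{T:refinement2} to replace the fundamental classical chart by a \emph{linearly} fundamental one, and this is where the \ref{Cond:PC}/\ref{Cond:N}/\ref{Cond:FC} hypotheses are consumed — they are not needed for "deformations of quotient presentations" in the sense you describe). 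However, your final step is wrong in mechanism: there is no derived version of \Cref{T:Artin-algebraization} in the paper, and none is needed. The target-side limit is handled by nilcompleteness of locally finitely presented derived stacks ($\stX\simeq\varprojlim_n\tau_{\leq n}\stX$, \cite[Prop.~5.3.7]{Lurie-Thesis}), so the compatible maps $f_{\leq n}$ assemble into $f\colon\stW\to\stX$ on the nose; the source-side limit is \Cref{P:derived-effectivity}, whose proof again uses linear fundamentality to deform the affine map to $B\GL_r$. Formal versality and Artin approximation play no role in the derived direction — the algebraization has already happened at the classical level.

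The quasi-smooth case is a genuine gap, and you correctly flag it as the main obstacle without resolving it. Deforming a quasi-smooth chart directly through the Postnikov tower (or appealing to "formal versality ... plus the torsor structure") does not work: a quasi-smooth non-smooth morphism is not formally smooth, so the lifting problems are obstructed and the vanishing argument from the smooth case breaks down. The paper's resolution is to never deform $\stW$ at all. Instead, one factors $(\stW_0)_{\cl}\inj\stY_0\to(\stX_0)_{\cl}$ with $\stY_0$ smooth and affine over $(\stX_0)_{\cl}$ (this is where the resolution property enters), applies the already-proved smooth derived case to produce a smooth chart $\stY\to\stX$, lifts the closed immersion $\stW_0\inj\stY$ (unobstructed since $\stY\to\stX$ is smooth and $\stW_0$ is linearly fundamental), extends the conormal bundle $\sN=\pi_1(\LL_{\stW_0/\stY_0})$ to a vector bundle $\sE$ on $\stY$, realizes $\stW_0$ as the \emph{derived zero locus} of a section of $\sE^\vee|_{\stY_0}$ (checking $\LL_{\stW_0/\stZ_0}=0$ so the comparison map is an open immersion), and finally lifts that section to $\stY$ using cohomological affineness. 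Defining $\stW$ as the derived zero locus of the lifted section makes quasi-smoothness automatic and sidesteps the obstruction theory you were worried about entirely; this global complete-intersection construction is the missing idea in your sketch.
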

It follows from \Cref{P:derived-effectivity} that $\stX$ is
linearly fundamental if and only if the underlying classical stack $\stX_{\cl}$ is linearly fundamental. See \Cref{S:local-structure-derived} for further discussion. 
\subsection*{Application: Local structure of a \texorpdfstring{$\Theta$}{Theta}-stratum}
Let $\mathscr{S}$ be a quasi-separated algebraic stack and let $\stX$ be an
algebraic stack, quasi-separated and locally of finite presentation over
$\mathscr{S}$ with affine stabilizers relative to $\mathscr{S}$. Let $\Theta :=
[\mathbb{A}^1 / \mathbb{G}_m]$; then the mapping stack
$\mathrm{Filt}(\stX):=\underline{\Map}_\mathscr{S}(\Theta_{\mathscr{S}},\stX)$
is also algebraic, locally of finite presentation, quasi-separated, and has
affine stabilizers relative to $\mathscr{S}$
\cite[Prop.~1.1.2]{hlinstability}. A $\Theta$-stratum in $\stX$ is by
definition an open and closed substack $\stY \subset \mathrm{Filt}(\stX)$ such
that the morphism $\stY \to \stX$ defined by restricting to $1 \in \Theta$ is a
closed immersion, so that we may also regard $\stY$ as a closed substack of
$\stX$ (see \cite[Defn.~2.1.1]{hlinstability}).

Stratifications by closed substacks of this kind arise in geometric invariant theory, as well as on moduli stacks such as the moduli of torsion free sheaves on a projective scheme. In \cite[Lem.~6.11]{AHLH}, the following local structure result was established using our \Cref{T:local-structure:non-noeth}, and it is key to proving the semistable reduction theorem \cite[Thm.~6.3]{AHLH}.
\begin{proposition}
Let $S$ be a noetherian algebraic space. Let $\stX$ be an algebraic stack
of finite type over $S$ with affine diagonal over $S$. If $\stY \hookrightarrow \stX$ is a $\Theta$-stratum, then there is a smooth representable morphism $p \colon [\Spec(A)/\mathbb{G}_m] \to \stX$ such that $\stY$ is contained in the image of $p$, and $p^{-1}(\stY)$ is the $\Theta$-stratum
\[
p^{-1}(\stY) = [\Spec(A/I_+)/\mathbb{G}_m] \hookrightarrow [\Spec(A)/\mathbb{G}_m],
\]
where $I_+ \subset A$ is the ideal generated by elements of positive degree.
\end{proposition}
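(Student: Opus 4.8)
The plan is to deduce this $\Theta$-stratum statement directly from the local structure theorem for stacks at closed substacks, \Cref{T:local-structure:excellent}, applied to the closed immersion $\stY \inj \stX$. First I would describe the canonical local model at the level of the stratum itself: since $\stY \inj \stX$ is a $\Theta$-stratum, $\stY$ carries a distinguished $\mathbb{G}_m$-action (encoding the normal directions of the stratum), and more precisely there is a natural identification of $\stY$ with a locally closed substack of $\mathrm{Filt}(\stX)$, and the map $\mathrm{ev}_0 \colon \mathrm{Filt}(\stX)\to \stX$ restricted over $\stY$ produces the associated graded stack $\mathrm{Grad}(\stX)\times_{\mathrm{Filt}(\stX)}\stY$, which is a $\mathbb{G}_m$-gerbe-like object over $\stY$; in the affine-diagonal noetherian setting this means that étale-locally on $\stY$ there is a presentation $\stY = [\Spec(B_0)/\mathbb{G}_m]$, after first replacing $\stX$ by an étale neighborhood if necessary to make $\stY$ fundamental (using that a $\Theta$-stratum with affine diagonal is linearly fundamental since $\mathbb{G}_m$ is linearly reductive — here I would invoke the structure of $\Theta$-stacks from \cite{hlinstability}). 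Thus $f_0 \colon \stW_0 := [\Spec(B_0)/\mathbb{G}_m] \to \stY$ can be taken to be the identity, or a chart, with $\stW_0$ linearly fundamental.

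Second I would extend this chart off the stratum. Apply \Cref{T:local-structure:excellent}\itemref{TI:local-structure:excellent1} to the diagram with $\stX_0 := \stY$, $\stW_0 = [\Spec(B_0)/\mathbb{G}_m]$ and $f_0$ smooth (resp.\ étale); since $\mathbb{G}_m$ has nice — indeed multiplicative-type — stabilizers, condition \ref{Cond:N} holds for $\stW_0$, so the theorem applies with no characteristic hypothesis. This yields a smooth (resp.\ étale) morphism $\stW \to \stX$ with $\stW$ fundamental and $\stW|_\stY \simeq \stW_0$. Then I would apply \Cref{T:refinement2} to upgrade $\stW$ to a linearly fundamental stack of the form $[\Spec(A)/\mathbb{G}_m]$ with the $\mathbb{G}_m$ over the good moduli space $W$ restricting to the given $\mathbb{G}_m$ over $W_0$ — again \ref{Cond:N} for $\stW_0$ removes characteristic obstructions, and the "nice" case of Refinement 2 gives exactly a $\mathbb{G}_m$-quotient presentation since $\mathbb{G}_m$ is its own reductive model. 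Finally \Cref{T:refinement1} promotes $p \colon [\Spec(A)/\mathbb{G}_m]\to \stX$ to a representable morphism, using that $\stX$ has affine diagonal and that $p|_\stY$ — being the chart of a $\Theta$-stratum — is already representable.

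Third, I would identify $p^{-1}(\stY)$ with the claimed closed substack. The key point is that $\stY \inj \stX$ is not just any closed immersion but a $\Theta$-stratum, and this structure is preserved under the smooth representable base change $p$: that is, $p^{-1}(\stY)$ is again a $\Theta$-stratum in $[\Spec(A)/\mathbb{G}_m]$, because $\mathrm{Filt}$ commutes with the (smooth, representable) pullback along $p$ and the property of being open-closed in $\mathrm{Filt}$ with closed-immersion restriction to $1\in\Theta$ is local on $\stX$. Now a $\Theta$-stratum inside $[\Spec(A)/\mathbb{G}_m]$ must, by the weight decomposition $A = \bigoplus_{n\in\ZZ} A_n$, be cut out by a $\mathbb{G}_m$-invariant ideal that is a union of graded pieces defining an open-closed locus in $\mathrm{Filt} = \coprod$(fixed-point components indexed by weight data); one checks that over the component corresponding to "attracting as $t\to 0$" the ideal is precisely $I_+ = \bigoplus_{n>0} A_n \cdot A$, so $p^{-1}(\stY) = [\Spec(A/I_+)/\mathbb{G}_m]$ as a $\Theta$-stratum. (One may need to pass to a further Nisnevich or étale refinement of $\stW$ so that $p^{-1}(\stY)$ is connected / a single such component, matching $\stY$ on the nose rather than a disjoint union.)

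The main obstacle I anticipate is the last step: verifying that the $\Theta$-stratum structure is genuinely inherited by $p^{-1}(\stY)$ and then pinning down the ideal as $I_+$. This requires knowing that $\mathrm{Filt}(-)$ and $\mathrm{Grad}(-)$ behave functorially under the base change $p$ — which is fine for smooth representable $p$ by \cite[Prop.~1.1.2]{hlinstability} and the fact that mapping stacks out of $\Theta$ commute with representable étale/smooth pullback — and then a concrete computation with the weight grading on $A$ to see that the "$t\to0$ limit exists" locus on $[\Spec A/\mathbb{G}_m]$, restricted to the relevant component, is exactly $V(I_+)$. The potential subtlety is matching the component of $\mathrm{Filt}([\Spec A/\mathbb{G}_m])$ hit by $\stY$ with the positive-weight choice rather than the negative-weight one, which is just a sign/orientation bookkeeping issue but must be done carefully; shrinking $\stW$ further if needed resolves any multi-component ambiguity.
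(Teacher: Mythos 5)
First, note that the paper itself does not prove this proposition: it is quoted from \cite[Lem.~6.11]{AHLH}, with the remark that the proof there runs through \Cref{T:local-structure:non-noeth}. Your top-level strategy --- apply the local structure theorem to the closed immersion $\stY \inj \stX$ with a linearly fundamental $\mathbb{G}_m$-chart $\stW_0 \to \stY$, use \Cref{T:refinement1,T:refinement2} to get a representable $[\Spec A/\mathbb{G}_m]$-presentation, then identify the preimage of the stratum --- is the right one, and your observation that \ref{Cond:N} holds automatically for a $\mathbb{G}_m$-quotient is exactly why no characteristic or excellence hypotheses are needed (though you should invoke \Cref{T:local-structure:non-noeth}, not \Cref{T:local-structure:excellent}, since $S$ is only noetherian). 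However, your Step 1 has a real gap: a $\Theta$-stratum $\stY$ is \emph{not} linearly fundamental, even after replacing $\stX$ by an \'etale neighborhood --- its stabilizers contain a canonical central $\mathbb{G}_m$ but are otherwise arbitrary affine groups. The chart $\stW_0 = [\Spec(B_0)/\mathbb{G}_m] \to \stY$ must instead be produced from the structure of the stratum: the morphism $\mathrm{ev}_0 \colon \stY \to \stZ$ to the center $\stZ \subseteq \mathrm{Grad}(\stX)$ is affine with non-positively graded pushforward, and one needs a chart of $\stZ$ whose $\mathbb{G}_m$ is identified with the canonical central one. This identification is not optional bookkeeping: if the $\mathbb{G}_m$ of the presentation is unrelated to the central $\mathbb{G}_m$ of the stratum, the grading on $A$ has nothing to do with $\stY$ and the assertion about $I_+$ is vacuous.

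The more serious gap is Step 3. The claim that $\mathrm{Filt}$ commutes with smooth representable pullback is false: for a smooth cover $U \to \stX$ by an algebraic space, $\mathrm{Filt}(U) = U$ carries only trivial filtrations, while $\mathrm{Filt}(\stX)\times_{\stX} U$ does not. So neither the assertion that $p^{-1}(\stY)$ is again a $\Theta$-stratum nor its identification with $V(I_+)$ follows formally, and the issue is not merely a sign or a choice of connected component. The two inclusions require separate arguments tied to the construction of the chart: $p^{-1}(\stY) = \stW_0 = [\Spec(A/J)/\mathbb{G}_m]$ with $A/J$ concentrated in non-positive weights by the choice of chart, whence $I_+ \subseteq J$ and $p^{-1}(\stY) \subseteq V(I_+)$; and the reverse inclusion amounts to showing that the canonical component $[\Spec(A/I_+)/\mathbb{G}_m]$ of $\mathrm{Filt}([\Spec A/\mathbb{G}_m])$ maps into the open and closed substack $\stY \subseteq \mathrm{Filt}(\stX)$, which uses openness and closedness of $\stY$ together with a specialization argument along the $\mathbb{G}_m$-flow into the fixed locus. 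With these two repairs your outline does match the argument of \cite[Lem.~6.11]{AHLH}; as written, the decisive steps are asserted rather than proved.
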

\end{section}
%\setcounter{secnumdepth}{3}

%%%%%%%%%%%%%%%%%%%%%%%%%%%%%%%%%%%%%%%%%%%%%%%%%%%%%%%%%%%%%%%%%%%%%%%%%

\begin{section}{Artin algebraization}\label{S:algebraization}

In this section we prove Artin's algebraization theorem for 
linearly fundamental pairs (\Cref{T:Artin-algebraization}) which establishes 
\Cref{T:artin-algebraization-intro} as a special case. In order to state the theorem, we will need the following terminology.

\begin{definition}
A \emph{pair} $(\stX,\stX_0)$ consists of an algebraic stack $\stX$ and a
closed substack $\stX_0$. We let $\sI_\stX$ denote the ideal defining $\stX_0$
and let $\stX_n$ denote the $n$th infinitesimal neighborhood of
$\stX_0$, that is, the closed substack defined by $\sI_\stX^{n+1}$.  We say 
that a pair $(\stX, \stX_0)$ has a given property $\sP$ (e.g. linearly fundamental) 
if both $\stX$ and $\stX_0$ have $\sP$.

A \emph{morphism of pairs} $(\stX,\stX_0)\to (\stY,\stY_0)$ is a morphism
$f\co \stX\to \stY$ such that $\stX_0\inj f^{-1}(\stY_0)$, or
equivalently, $f^{-1}\sI_\stY\subseteq \sI_\stX$. For any $n\geq 0$, we
let $f_n\co \stX_n\to \stY_n$ denote the induced morphism.
We say that $f$ is \emph{adic} if $\stX_0=f^{-1}(\stY_0)$.
\end{definition}

Note that if $f$ is adic, then $\stX_n=f^{-1}(\stY_n)$ for all $n$.

\begin{definition} \label{D:formally_versal}
  Let $f\colon \stZ \to \stX$  be a morphism of functors or stacks (e.g., schemes or
  algebraic spaces). Let $T$ be a stack and $T\to \stZ$ a morphism.  We say that
  \emph{$f$ is formally versal at $T$} if the following condition holds:
  For every nilpotent immersions $T\inj T'\inj T''$ and $2$-commutative diagram
  of solid arrows
  \[
  \xymatrix{
  T\ar[r] & T'\ar[d]\ar[r] & \stZ \ar[d]^f\\
          & T''\ar[r]\ar@{-->}[ur] & \stX,
  }%
  \]
  there exists a lift $T''\to \stZ$ and $2$-morphisms that make the whole diagram
  $2$-commutative.
  \end{definition}

Our main theorem is the following result, which generalizes
\cite[Cor.~A.19]{alper-hall-rydh_luna-stacks} and
\cite[Thm.~12.14]{alper-hall-rydh_etale-local-stacks}.

\begin{theorem}[Algebraization of linearly fundamental pairs]\label{T:Artin-algebraization}
Let $S$ be an excellent affine scheme. Let $\stX$ be an algebraic stack, locally of finite type over $S$ with quasi-separated diagonal.
Let $(\stZ,\stZ_0)$ be a complete linearly fundamental pair
(\Cref{D:complete-pair}) over $S$ such that $\stZ_0$ is of finite
type over $S$. Let $\eta\colon \stZ\to \stX$ be a morphism, formally versal at
$\stZ_0$. Then there exists
\begin{enumerate}
\item a fundamental pair $(\stW,\stW_0)$ such that
  $\stW\to S$ is of finite type and $\stW_0$ is linearly fundamental;
\item a morphism $\varphi\colon (\stZ,\stZ_0)\to (\stW,\stW_0)$
  such that $\varphi_n\colon \stZ_n \to \stW_n$ is an isomorphism for all $n \geq 0$.
\item a $2$-commutative diagram over $S$
\[
\xymatrix{\stZ \ar[r]^\varphi \ar@/_2ex/[rr]_{\eta} & \stW \ar[r]^{\xi} & \stX}
\]
\end{enumerate}
In particular, the induced map $\widehat{\varphi}\colon \stZ \to \widehat{\stW}$ is an isomorphism and $\xi$ is smooth in a neighborhood of $\stW_0$.
\end{theorem}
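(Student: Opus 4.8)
The plan is to reduce Theorem~\ref{T:Artin-algebraization} to the classical Artin algebraization theorem for versal deformations of complete local rings, passing through the presentation of a linearly fundamental pair as a quotient stack and applying Artin approximation over a henselian pair. First I would use the structure theory of linearly fundamental stacks to write $\stZ = [\Spec B / \GL_n]$ where $B$ is a ring complete along a $\GL_n$-invariant ideal $I$ with $\stZ_0 = [\Spec(B/I)/\GL_n]$, and $B$ is (the completion of) a ring of finite type over the ring of invariants. The key point is that the good moduli space $Z = \Spec B^{\GL_n}$ is a complete local ring (or complete along an ideal) to which classical algebraization applies. So I would first descend the problem along $\stZ \to Z$: by \cite[App.~A]{alper-hall-rydh_luna-stacks}-type arguments, formal versality of $\eta$ at $\stZ_0$ translates into a versality statement for a morphism from the good moduli space, after enlarging $\stX$ to an appropriate auxiliary algebraic stack (a Hilbert/Quot-type or hull-of-the-deformation-functor stack) parameterizing the data of $(\stZ, \eta)$.

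Next I would invoke the already-established \Cref{T:artin-algebraization-intro} — the version where $Z$ is a scheme — or directly Artin's original theorem, to produce an affine scheme $W'$ of finite type over $S$, a closed subscheme $W'_0 \inj W'$, and a morphism to the auxiliary stack, together with an isomorphism $Z \cong \widehat{W'}$ compatible on all infinitesimal neighborhoods. Then I would re-assemble the stacky structure: the fundamental pair $(\stW, \stW_0)$ is obtained as a quotient $[\Spec A/\GL_n]$ for a suitable finitely presented $\GL_n$-algebra $A$ over $W'$ reconstructed from the $\GL_n$-equivariant structure on $B$ (using that equivariant affine morphisms, and more generally the category of representations, spread out from the formal/complete situation to a finite-type one by Grothendieck existence for the noetherian complete $B^{\GL_n}$ together with Artin approximation applied a second time to the equivariant data). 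The morphism $\varphi\colon (\stZ,\stZ_0) \to (\stW,\stW_0)$ is then the induced completion map, which is an isomorphism on each $\stZ_n$ by construction, and $\xi\colon \stW \to \stX$ is obtained by algebraizing the compatible system $\eta_n\colon \stZ_n \to \stX$ over the infinitesimal neighborhoods and extending to $\stW$ using finite presentation of $\stX$ over $S$.

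For the final ``in particular'' claim: once $\varphi$ induces an isomorphism $\widehat{\varphi}\colon \stZ \iso \widehat{\stW}$ (this is immediate from $\varphi_n$ being an isomorphism for all $n$, by passing to the limit) and $\eta = \xi \circ \varphi$ holds $2$-commutatively, the smoothness of $\xi$ near $\stW_0$ follows from the standard ``formal versality $+$ algebraization $\Rightarrow$ smoothness'' argument of Artin: formal versality of $\eta$ at $\stZ_0$ forces $\xi$ to be formally smooth along $\stW_0$ (one lifts the infinitesimal extensions using versality of $\eta$ together with the isomorphism $\widehat\varphi$), and since $\xi$ is locally of finite presentation — as $\stW$ is of finite type over $S$ and $\stX$ is locally of finite type over $S$ — formal smoothness along a closed substack spreads out to smoothness on an open neighborhood of $\stW_0$ by the openness of the smooth locus (equivalently, the cotangent complex $\LL_{\xi}$ is, after pullback to each $\stW_n$, of the expected form, and this condition is open).

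The main obstacle I expect is the descent-and-reconstruction of the $\GL_n$-equivariant (equivalently, fundamental) structure: classical Artin algebraization naturally lives on good moduli spaces / coarse spaces, so the delicate part is ensuring that the finitely presented algebraization $W'$ carries a $\GL_n$-action whose completion recovers the original one, compatibly on all infinitesimal neighborhoods and compatibly with $\xi$. This is where one must package the equivariant data into the auxiliary algebraic stack to which Artin approximation is applied, and verify that this auxiliary stack is itself locally of finite presentation over $S$ with quasi-separated diagonal — precisely the hypotheses that let Artin approximation over the excellent henselian pair $(\widehat{W'}, W'_0)$ go through, following \cite{conrad-deJong} and \cite[App.~A]{alper-hall-rydh_luna-stacks}. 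The verification that $\stW$ so produced is \emph{fundamental} and $\stW_0$ is \emph{linearly fundamental} (rather than merely being quotient stacks) then follows because cohomological affineness of $\stW_0$ is inherited from $\stZ_0$ via the isomorphism $\varphi_0$, and fundamentality of $\stW$ is built into the quotient presentation by $\GL_n$.
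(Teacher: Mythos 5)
There are genuine gaps. The most serious is a circularity: you invoke \Cref{T:artin-algebraization-intro} as ``already-established,'' but in the paper that statement is deduced \emph{from} \Cref{T:Artin-algebraization}, and Artin's original theorem does not substitute for it, since it applies only to rings complete along a maximal ideal, not along a general ideal. The new content of the pair version --- Artin approximation over the henselian pair $(S^h,S_0)$ applied to a concrete limit-preserving functor (in the paper, the functor $F$ of two-term presentations $\sE_2\to\sE_1\to\sO_{U\times_S\stX}$), combined with the refined Artin--Rees theorem to produce the isomorphism of graded modules $\Gr_{\sI_\stZ}\sO_\stZ\cong\Gr_{\sI_\stW}\sO_\stW$ --- is exactly what your proposal leaves unsupplied. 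That graded isomorphism is not optional: formal versality only lets you extend $\psi_N$ to maps $\stW_{n+1}\to\stZ$, and it is \Cref{P:closed/iso-cond:nilpotent}\itemref{PI:iso:nilpotent} (via Vasconcelos's lemma) applied to the graded isomorphism that upgrades these to isomorphisms $\stW_n\cong\stZ_n$ for all $n$, and hence to $\stZ\cong\widehat{\stW}$ by coherent completeness. Asserting that the scheme-level algebraization outputs ``an isomorphism $Z\cong\widehat{W'}$ compatible on all infinitesimal neighborhoods'' presupposes the very mechanism you need to build.

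Second, the reconstruction of the fundamental structure --- which you correctly identify as the main obstacle --- is not resolved by your sketch. Appealing to Grothendieck existence plus a second round of Artin approximation ``applied to the equivariant data'' is not an argument; the auxiliary stack you would need is never constructed, and descending formal versality of $\eta$ to a versality statement on the good moduli space is itself problematic. The paper sidesteps all of this with a much simpler device: it does \emph{not} use versality in the approximation step at all (\Cref{P:weak-algebraization} only produces agreement to a fixed finite order $N$ plus the graded isomorphism), and it carries the fundamental structure along by replacing $\stX$ with $\stX\times_S B\GL_{n,S}$ and approximating the affine morphism $\stZ\to B\GL_{n,S}$ simultaneously with $\eta$. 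Finally, your proof of the $2$-commutativity $\xi\circ\varphi\simeq\eta$ over all of $\stZ$ (rather than over each $\stZ_n$) is missing: this step requires algebraicity and quasi-separatedness of the diagonal of $\stX$, via Tannaka duality applied to a quasi-compact open of $\Isom_\stZ(\xi\circ\varphi,\eta)$, and is precisely the point of \Cref{R:artin-algebraization}.
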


\begin{remark} \label{R:artin-algebraization}
Most of the statement of the theorem remains valid, with the same proof, when $\stX$ is an arbitrary category fibered in groupoids that is locally of finite presentation over $S$. The only difference is that instead of a $2$-isomorphism $\xi \circ \varphi \simeq \eta$, one only obtains a compatible family of $2$-isomorphisms $\xi \circ \varphi|_{\stZ_n} \simeq \eta|_{\stZ_n}$ for all $n\geq 0$.
\end{remark}

We prove this theorem at the end of the section after discussing some background
material on pairs.  We first explain how this theorem implies \Cref{T:artin-algebraization-intro}.

\begin{proof}[Proof of \Cref{T:artin-algebraization-intro}]
  Applying \Cref{T:Artin-algebraization} and \Cref{R:artin-algebraization} 
  with $(\stZ, \stZ_0) := (Z, Z_0)$ gives a fundamental pair $(\stW, \stW_0)$ with $\stW_0 \cong Z_0$.  Since $Z_0$ is affine, we may apply \cite[Prop.~12.5]{alper-hall-rydh_etale-local-stacks} to the morphism $\stW \to S$ to conclude that there is affine open neighborhood $U \subset \stW$ of $Z_0$. Replacing $(\stW, \stW_0)$ with $(U, Z_0)$ gives the result. 
\end{proof}

\subsection{Coherently complete pairs}
The following definition was introduced in \cite{alper-hall-rydh_luna-stacks} and 
was further studied in \cite{alper-hall-rydh_etale-local-stacks}.
\begin{definition}\label{D:complete-pair}
  We say that a pair $(\stX,\stX_0)$ is \emph{complete}, or that $\stX$ is
  \emph{coherently complete along $\stX_0$}, if $\stX$ is noetherian with affine
  diagonal and the induced functor $\Coh(\stX)\to \varprojlim_n \Coh(\stX_n)$ is an equivalence of
  abelian categories of coherent sheaves.
\end{definition}

  By Tannaka duality \cite{hall-rydh_coherent-tannaka-duality}, we have that
  $\stX$ is the colimit of $\{\stX_n\}_{n\geq 0}$ in the category of noetherian
  stacks with quasi-affine diagonal and also in the category of noetherian stacks
  with affine stabilizers if $\stX_0$ is quasi-excellent.
  
  Let $(\stX,\stX_0)$ be a linearly fundamental noetherian pair.  The good moduli
  space $X$ is a noetherian affine scheme and $\pi\co \stX\to X$ is of finite
  type. This gives a morphism of pairs $(\stX,\stX_0)\to (X,X_0)$ where
  $X_0=\pi(\stX_0)$.  The pair $(\stX,\stX_0)$ is complete if and only if
  $(X,X_0)$ is complete \cite[Thm.~1.6]{alper-hall-rydh_etale-local-stacks}. The
  latter simply means that if $X=\Spec A$ and $X_0=\Spec A/I$, then $A$ is
  $I$-adically complete.
  
  If $(\stX,\stX_0)$ is a fundamental noetherian pair
  such that $\stX_0$ is linearly fundamental, then $(\widehat{\stX},\stX_0)$ is a
  complete linearly fundamental pair where
  $\widehat{\stX}=\stX\times_X \widehat{X}$ and $\widehat{X}=\Spec \widehat{A}$
  is the $I$-adic completion. Indeed, the completion factors through
  the Zariskification $\stX\times_X \Spec \bigl( (1+I)^{-1}A \bigl)$, which is linearly fundamental by~\cite[Cor.~13.7]{alper-hall-rydh_etale-local-stacks}.

\subsection{Preliminary results on pairs}
In this section, we provide criteria to check that a 
morphism of pairs is a closed immersion or isomorphism (\Cref{P:closed/iso-cond:complete}) or is formally versal (\Cref{L:formally_versal}).
\begin{lemma}{\cite[Prop.~1.2]{vasconcelos_fin-gen-flat-modules}}\label{L:vasconcelos}
Let $A$ be a ring and let $\phi\colon M\to N$ be a surjective homomorphism of
finitely generated $A$-modules. If there exists an $A$-module isomorphism
$M\cong N$, then $\phi$ is an isomorphism.
\end{lemma}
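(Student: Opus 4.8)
The plan is to reduce the statement to the well-known fact that a surjective endomorphism of a finitely generated module over a commutative ring is automatically injective, and then prove that fact by the Cayley--Hamilton (determinant) trick.

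First I would fix an $A$-module isomorphism $\theta\colon M\xrightarrow{\sim}N$ and form the composite $g:=\theta^{-1}\circ\phi\colon M\to M$. Since $\phi$ is surjective and $\theta^{-1}$ is bijective, $g$ is a surjective $A$-linear endomorphism of the finitely generated module $M$. If $g$ is injective, then it is an isomorphism, and hence $\phi=\theta\circ g$ is an isomorphism, which is what we want. So it remains to prove: if $M$ is a finitely generated $A$-module and $g\colon M\to M$ is a surjective $A$-homomorphism, then $g$ is injective.

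For this I would make $M$ into a module over the polynomial ring $R:=A[t]$ by letting $t$ act as $g$; then $M$ is still finitely generated over $R$, and surjectivity of $g$ says precisely that $(t)\cdot M=M$. By the Cayley--Hamilton theorem (equivalently, the version of Nakayama's lemma for the ideal $(t)\subseteq R$), there is an element $a\in(t)$, say $a=t\,h(t)$ with $h\in R$, such that $(1+a)M=0$. Now for any $m\in\ker g$ we have $t\,m=g(m)=0$, hence, using that $R$ is commutative,
\[
0=(1+t\,h(t))\,m=m+h(t)\cdot(t\,m)=m+0=m,
\]
so $m=0$ and $g$ is injective, completing the argument.

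The argument has no real obstacle: the only ingredient beyond formal manipulation is the determinant trick, which is standard. The one point to get right is the bookkeeping of which module carries the auxiliary isomorphism, so that one ends up proving the injectivity of $\phi$ itself rather than of its (a priori nonexistent) inverse.
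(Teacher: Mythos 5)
Your proposal is correct and follows essentially the same route as the paper: identify $N$ with $M$ via the given isomorphism, view $\phi$ as a surjective endomorphism, endow $M$ with an $A[t]$-module structure where $t$ acts by that endomorphism, and apply Nakayama's lemma (the determinant trick) to the ideal $(t)$. The only cosmetic difference is that the paper reads off an explicit inverse $x\mapsto ax$ from $(1-at)M=0$, whereas you deduce injectivity by evaluating on an element of the kernel; both are immediate consequences of the same identity.
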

\begin{proof}
We identify $N$ with $M$ and treat $\phi$ as an endomorphism of $M$. Then
$M$ is also a module over $A[t]$ where $tx=\phi(x)$ for $x\in M$. Since $\phi$
is surjective $tM=M$ and Nakayama's lemma tells us that there is an element
$a\in A[t]$ such that $(1-at)M=0$. That is $\phi$ has inverse given by
$\phi^{-1}(x)=ax$.
\end{proof}

\begin{lemma}\label{L:ring-iso:nilpotent}
Suppose that $I\subseteq R$ is an ideal and $\varphi\colon R\to S$ is a
surjective homomorphism of noetherian rings. If there is an abstract
isomorphism of graded
$R/I$-modules $\Gr_I R\to \Gr_{IS} S$ and $I$ is nilpotent, then
$\varphi$ is an isomorphism.
\end{lemma}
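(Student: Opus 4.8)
The plan is to apply \Cref{L:vasconcelos} to the associated graded modules and then to climb back up the $I$-adic filtration by induction, using the nilpotence of $I$ in an essential way to guarantee finite generation.

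First I would note that, since $\varphi$ is surjective, $\varphi(I^n)S=(IS)^n$ for every $n$, so $\varphi$ induces a surjective homomorphism of graded $R/I$-modules $\Gr(\varphi)\colon \Gr_I R\to \Gr_{IS}S$. Each $I^n/I^{n+1}$ is a finitely generated $R/I$-module because $R$ is noetherian, and since $I$ is nilpotent, say $I^{N+1}=0$, the sum $\Gr_I R=\bigoplus_{n=0}^{N} I^n/I^{n+1}$ is finite; hence $\Gr_I R$, and likewise $\Gr_{IS}S$, is a finitely generated $R/I$-module. By hypothesis these two modules are abstractly isomorphic over $R/I$, so \Cref{L:vasconcelos} shows that $\Gr(\varphi)$ is an isomorphism. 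In particular every graded component $I^n/I^{n+1}\to (IS)^n/(IS)^{n+1}$ is an isomorphism, the degree-$0$ component being $R/I\xrightarrow{\sim} S/IS$.

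Next I would prove by induction on $n\geq 0$ that $\varphi$ induces an isomorphism $R/I^{n+1}\to S/(IS)^{n+1}$; the base case $n=0$ is the degree-$0$ statement just obtained. For the inductive step I would compare, via the commutative ladder induced by $\varphi$, the short exact sequences
\[
0\to I^{n+1}/I^{n+2}\to R/I^{n+2}\to R/I^{n+1}\to 0, \qquad 0\to (IS)^{n+1}/(IS)^{n+2}\to S/(IS)^{n+2}\to S/(IS)^{n+1}\to 0.
\]
The left vertical map is an isomorphism by the previous paragraph and the right one by the inductive hypothesis, so the five lemma (equivalently, the snake lemma) forces the middle map to be an isomorphism. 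Taking $n=N$ and using $I^{N+1}=0$ — whence also $(IS)^{N+1}=\varphi(I^{N+1})S=0$ — shows that $\varphi\colon R=R/I^{N+1}\to S/(IS)^{N+1}=S$ is an isomorphism.

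The only delicate point is the finite generation of $\Gr_I R$ over $R/I$: this fails in general but holds here precisely because $I$ is nilpotent, and without it \Cref{L:vasconcelos} could not be invoked. The rest is a routine induction along the filtration.
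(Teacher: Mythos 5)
Your proof is correct and follows essentially the same route as the paper: use \Cref{L:vasconcelos} to show the induced map on associated graded modules is an isomorphism, then climb the $I$-adic filtration via the short five lemma. The only cosmetic difference is that the paper applies \Cref{L:vasconcelos} degree by degree (each $I^n/I^{n+1}$ is finitely generated because $R$ is noetherian, so nilpotence is not needed for that step and is only invoked at the end to get $R=R/I^{n}$ for large $n$), whereas you apply it to the whole graded module at once, which is why you need nilpotence already there — your closing remark that Vasconcelos "could not be invoked" without nilpotence is therefore an artifact of your packaging rather than an essential point.
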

\begin{proof}
Since $\varphi$ is surjective, it induces a surjection $\Gr_n \varphi\colon
I^n/I^{n+1}\to I^nS/I^{n+1}S$ of finitely generated $R/I$-modules. By
assumption, there is an abstract isomorphism $I^n/I^{n+1}\to I^nS/I^{n+1}S$
of $R/I$-modules,
so $\Gr_n \varphi$ is an isomorphism by \Cref{L:vasconcelos}.

We have induced morphisms of exact sequences
\[
\xymatrix{%
0\ar[r] & I^d/I^{d+1}\ar[r]\ar[d]^{\Gr_d\varphi} & R/I^{d+1}\ar[r]\ar[d]^{\varphi_{d+1}} & R/I^d\ar[r]\ar[d]^{\varphi_d} & 0\\
0\ar[r] & I^d S/I^{d+1} S\ar[r] & S/I^{d+1} S\ar[r] & S/I^d S\ar[r] & 0
}
\]
and it follows that $\varphi_d\colon R/I^d\to S/I^dS$ is an isomorphism for
every $d\geq 0$ by induction on $d$. Since $I^n=0$ for sufficiently large $n$,
the result follows.
\end{proof}

The following results generalize~\cite[Props.~A.8 and A.10]{alper-hall-rydh_luna-stacks} from the local case.

\begin{proposition}\label{P:closed/iso-cond:nilpotent}
Let $f\colon (\stX,\stX_0)\to (\stY,\stY_0)$ be a morphism of noetherian pairs.
\begin{enumerate}
\item\label{PI:closed:nilpotent}
  If $f_1$ is a closed immersion, then so is $f_n$ for every $n\geq 0$.
\item\label{PI:adic:nilpotent}
  If $f_1$ is a closed immersion and $f_0$ is an isomorphism, then $f_n$ is
  adic for every $n\geq 0$.
\item\label{PI:iso:nilpotent}
  If $f_1$ is a closed immersion and there
  exists an isomorphism of graded $\sO_{\stY_0}$-modules $\psi\colon \Gr_{\sI_\stY}(\sO_\stY)\to
  (f_0)_*\Gr_{\sI_\stX}(\sO_\stX)$, then
  $f_n$ is an isomorphism for every $n\geq 0$.
\end{enumerate}
\end{proposition}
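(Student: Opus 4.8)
The plan is to reduce all three assertions to commutative algebra, where they follow from \Cref{L:vasconcelos} and \Cref{L:ring-iso:nilpotent}. Being a closed immersion, being adic, and being an isomorphism may each be checked smooth-locally on $\stY$, so after base changing along a smooth surjection from an affine scheme we may assume $\stY=\Spec B$ is affine; the hypotheses are preserved, since flat pullback commutes with forming the sheaves $\sI_\stX^k/\sI_\stX^{k+1}$ and, as $f_1$ (hence $f_0$) is a closed immersion and in particular affine, with $(f_0)_*$, so that in particular the graded isomorphism $\psi$ of part~\ref{PI:iso:nilpotent} pulls back to a graded isomorphism. Now $\stY_1=\Spec(B/\sI_\stY^2)$ is affine, hence so is $\stX_1$, a closed subscheme of $\stY_1$ via $f_1$; and since the ideal $\sI_\stX^2/\sI_\stX^{n+1}$ of $\stX_1$ in $\stX_n$ is nilpotent, while a nilpotent thickening of an affine scheme is an affine scheme, every $\stX_n$ and every $\stY_n$ is an affine scheme. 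Write $\stX_n=\Spec A_n$ and $\stY_n=\Spec B_n$ with $B_n=B/\sI_\stY^{n+1}$, let $\mathfrak{i}_n\subseteq A_n$ and $\mathfrak{j}_n\subseteq B_n$ be the nilpotent ideals defining $\stX_0$ and $\stY_0$, and let $\varphi_n\colon B_n\to A_n$ be the ring map corresponding to $f_n$; then $\varphi_n(\mathfrak{j}_n)\subseteq\mathfrak{i}_n$, the ideal $\mathfrak{i}_n$ corresponds to the ideal sheaf $\sI_\stX/\sI_\stX^{n+1}$, and $\Gr_{\mathfrak{i}_n}A_n$ (resp.\ $\Gr_{\mathfrak{j}_n}B_n$) is the degree-$\le n$ truncation of $\Gamma(\stX_0,\Gr_{\sI_\stX}\sO_\stX)$ (resp.\ of $\Gamma(\stY_0,\Gr_{\sI_\stY}\sO_\stY)$).

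For part~\ref{PI:closed:nilpotent} it is enough to show that $\varphi_n$ is surjective for every $n$. Write $D_n:=\varphi_n(B_n)\subseteq A_n$, a subring. Surjectivity of $\varphi_1$ shows that $D_n$ surjects onto $A_n/\mathfrak{i}_n^2=A_1$, hence $D_n+\mathfrak{i}_n^2=A_n$, and the modular law gives $(D_n\cap\mathfrak{i}_n)+\mathfrak{i}_n^2=\mathfrak{i}_n$. Multiplying, the powers $(D_n\cap\mathfrak{i}_n)^k$ lie in $D_n\cap\mathfrak{i}_n^k$ and surject onto $\mathfrak{i}_n^k/\mathfrak{i}_n^{k+1}$ for every $k\ge 1$, while $D_n$ surjects onto $A_n/\mathfrak{i}_n=A_0$. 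As $\mathfrak{i}_n$ is nilpotent, a descending induction along the finite filtration $\{\mathfrak{i}_n^k\}_k$ now yields $D_n=A_n$. Thus $f_n$ is a closed immersion.

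For part~\ref{PI:adic:nilpotent}, the snake lemma applied to $0\to\mathfrak{j}_1\to B_1\to B_0\to 0$ and $0\to\mathfrak{i}_1\to A_1\to A_0\to 0$, with vertical maps $\mathfrak{j}_1\to\mathfrak{i}_1$, $\varphi_1$, $\varphi_0$, shows that surjectivity of $\varphi_1$ together with injectivity of $\varphi_0$ (which holds because $f_0$ is an isomorphism) forces $\varphi_1(\mathfrak{j}_1)=\mathfrak{i}_1$. Reducing modulo $\mathfrak{i}_n^2$ then gives $\varphi_n(\mathfrak{j}_n)A_n+\mathfrak{i}_n^2=\mathfrak{i}_n$, and since $\mathfrak{i}_n$ is nilpotent and $\varphi_n(\mathfrak{j}_n)A_n$ is an ideal, iterating this identity gives $\varphi_n(\mathfrak{j}_n)A_n=\mathfrak{i}_n$; this is precisely the statement $\stX_0=f_n^{-1}(\stY_0)$, i.e.\ $f_n$ is adic. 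For part~\ref{PI:iso:nilpotent}, note that $\psi_0$ being an isomorphism together with surjectivity of $\varphi_0$ (from part~\ref{PI:closed:nilpotent}) forces $f_0$ to be an isomorphism by \Cref{L:vasconcelos}, so part~\ref{PI:adic:nilpotent} applies; then $\varphi_n$ is surjective and $\varphi_n(\mathfrak{j}_n)A_n=\mathfrak{i}_n$, whence $\Gr_{\varphi_n(\mathfrak{j}_n)A_n}A_n=\Gr_{\mathfrak{i}_n}A_n$, and taking global sections of the degree-$\le n$ truncation of $\psi$ yields an abstract isomorphism of graded $B_0$-modules $\Gr_{\mathfrak{j}_n}B_n\cong\Gr_{\varphi_n(\mathfrak{j}_n)A_n}A_n$. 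As $\mathfrak{j}_n$ is nilpotent, \Cref{L:ring-iso:nilpotent} shows $\varphi_n$ is an isomorphism, i.e.\ $f_n$ is an isomorphism.

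The main obstacle is the reduction step rather than the algebra. One must check that after the smooth base change all the $\stX_n$ really are affine schemes --- this is exactly where the fact that a nilpotent thickening of an affine scheme is affine is used, applied to $\stX_1\inj\stX_n$ --- and that the data in the hypotheses, notably the graded isomorphism $\psi$ of part~\ref{PI:iso:nilpotent}, are preserved, which rests on $f_0$ being affine so that $(f_0)_*$ is exact and compatible with flat base change. Once the problem is phrased in terms of the maps $\varphi_n$, the rest is the elementary commutative algebra above, \Cref{L:vasconcelos}, and \Cref{L:ring-iso:nilpotent}.
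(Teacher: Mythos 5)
Your proof is correct and takes essentially the same route as the paper's: reduce to the affine case (where the paper delegates part \itemref{PI:closed:nilpotent} to a citation and a Nakayama-type argument), prove surjectivity of $\varphi_n$ by induction along the powers of the nilpotent ideal, deduce adicness from $f_0$ being an isomorphism, and settle part \itemref{PI:iso:nilpotent} via \Cref{L:vasconcelos} in degree $0$ followed by \Cref{L:ring-iso:nilpotent}. You have simply spelled out the local reduction and the commutative algebra that the paper's terser proof leaves implicit.
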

\begin{proof}
We can replace $f$ with $f_n$.
The first part is then~\cite[Lem.~6.3]{alper-hall-rydh_etale-local-stacks}: the
question is local and reduces to the affine case where it follows from
Nakayama's lemma. For the second part, we have seen that $f_n$ is a closed
immersion and then it is adic if and only if $f_0$ is an isomorphism.
The third part is also local and thus follows from
\Cref{L:ring-iso:nilpotent}.
\end{proof}

\begin{proposition}\label{P:closed/iso-cond:complete}
Let $f\colon (\stX,\stX_0)\to (\stY,\stY_0)$ be a morphism of complete pairs
such that $f_0$ is an isomorphism.
\begin{enumerate}
\item\label{PI:closed}
  $f$ is a closed immersion if and only if $f_1$
  is a closed immersion.
\item\label{PI:iso}
  $f$ is an isomorphism if and only if $f_1$ is a closed immersion and there
  exists an isomorphism $\psi\colon \Gr_{\sI_\stY}(\sO_\stY)
  \to (f_0)_*\Gr_{\sI_\stX}(\sO_\stX)$ of graded $\sO_{\stY_0}$-modules.
\end{enumerate}
\end{proposition}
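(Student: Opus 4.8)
The plan is to reduce both statements to the finite-level assertions of \Cref{P:closed/iso-cond:nilpotent} and then pass to the limit using the coherent completeness of $\stY$, which is available in two forms: the exact --- hence faithful --- equivalence $\Coh(\stY)\simeq\varprojlim_n\Coh(\stY_n)$, and its consequence (Tannaka duality \cite{hall-rydh_coherent-tannaka-duality}, recalled after \Cref{D:complete-pair}) that $\stY=\varinjlim_n\stY_n$, and likewise $\stX=\varinjlim_n\stX_n$, inside noetherian stacks with quasi-affine diagonal. The ``only if'' directions are immediate: a closed immersion restricts to closed immersions on all the $\stX_n$, and an isomorphism is in particular a closed immersion and induces the graded isomorphism $\psi$. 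So assume henceforth that $f_0$ is an isomorphism and $f_1$ is a closed immersion.

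First I would apply \Cref{P:closed/iso-cond:nilpotent} to learn that every $f_n\colon\stX_n\to\stY_n$ is a closed immersion, with ideal $\sJ_n\subseteq\sO_{\stY_n}$, say, and is adic, so that $\stX_{n-1}=\stX_n\times_{\stY_n}\stY_{n-1}$. Consequently the structure sheaves $(\sO_{\stX_n})_n$ --- with their algebra structures and the surjections $\sO_{\stY_n}\twoheadrightarrow\sO_{\stX_n}$ --- form a compatible system in $\varprojlim_n\Coh(\stY_n)$. Under the equivalence $\Coh(\stY)\simeq\varprojlim_n\Coh(\stY_n)$, which carries $\sO_\stY$ to $(\sO_{\stY_n})_n$, this system corresponds to a coherent sheaf $\sA$ on $\stY$ equipped, by exactness of the equivalence, with a surjection $\sO_\stY\twoheadrightarrow\sA$; thus $\sJ:=\ker(\sO_\stY\to\sA)$ is a coherent ideal and $\stX':=V(\sJ)\subseteq\stY$ is a closed substack with $\sO_{\stX'}\cong\sA$. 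By construction $\stX'$ is adic over $(\stY,\stY_0)$ and, for every $n$, its $n$th infinitesimal neighborhood $\stX'_n$ is the closed substack of $\stY_n$ cut out by $\sJ_n$; in particular $f_n$ is an isomorphism $\stX_n\iso\stX'_n$ over $\stY_n$, compatibly with the transition maps $\stX_{n-1}\inj\stX_n$ and $\stX'_{n-1}\inj\stX'_n$.

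Next I would verify that $(\stX',\stX'_0)$ is again a complete pair, so that $\stX'=\varinjlim_n\stX'_n$ as well. Indeed $\stX'$ is noetherian with affine diagonal, being closed in $\stY$; and faithfulness of the equivalence for $\stY$ --- a coherent $\sO_\stY$-module whose restrictions to all the $\stY_n$ are annihilated by $\sJ$ is itself annihilated by $\sJ$ --- shows that this equivalence restricts to an equivalence $\Coh(\stX')\simeq\varprojlim_n\Coh(\stX'_n)$. (Equivalently: a closed substack of a coherently complete stack is coherently complete along the induced closed substack, just as a quotient of an $I$-adically complete noetherian ring by an ideal is again complete.) The isomorphisms $f_n\colon\stX_n\iso\stX'_n$ are compatible with the transition maps, hence assemble to an isomorphism $g\colon\stX=\varinjlim_n\stX_n\iso\varinjlim_n\stX'_n=\stX'$; and $g$ is compatible with the maps to $\stY$, since $f_n$ and the closed immersion $\stX'_n\inj\stY_n$ are the same morphism. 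Therefore $f$ equals $g$ followed by $\stX'\inj\stY$, and so $f$ is a closed immersion --- this is the first statement. For the second statement, \Cref{P:closed/iso-cond:nilpotent}(3), applied with $\psi$, shows that $f_n$ is an isomorphism for every $n$; since $\stX=\varinjlim_n\stX_n$ and $\stY=\varinjlim_n\stY_n$ and the $f_n$ are compatible isomorphisms, $f=\varinjlim_n f_n$ is an isomorphism. (Alternatively, by the first statement $f$ is a closed immersion with ideal $\sJ\subseteq\sO_\stY$ that restricts to $0$ on every $\stY_n$, so $\sJ=0$ by faithfulness.)

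The one ingredient that is not purely formal is the stability of coherent completeness under passing to closed substacks --- equivalently, making sure that the algebra $\sA$ assembled above reconstructs $\stX$ itself and not some larger finite closed substack of $\stY$ thickening it. If a ready-made statement is available in \cite{alper-hall-rydh_luna-stacks,alper-hall-rydh_etale-local-stacks} I would cite it; otherwise the faithfulness argument above provides it in a few lines. Everything else is either \Cref{P:closed/iso-cond:nilpotent} (resting on \Cref{L:vasconcelos} and \Cref{L:ring-iso:nilpotent}) or bookkeeping with the exact, faithful equivalence $\Coh(\stY)\simeq\varprojlim_n\Coh(\stY_n)$.
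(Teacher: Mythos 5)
Your proposal is correct and follows essentially the same route as the paper: reduce to the infinitesimal neighborhoods via \Cref{P:closed/iso-cond:nilpotent} (closed immersion, adic, resp.\ isomorphism at each finite level), use coherent completeness of $\stY$ to assemble the compatible system of quotients $\sO_{\stY_n}\twoheadrightarrow\sO_{\stX_n}$ into a closed substack of $\stY$, and then identify $\stX$ with it by Tannaka duality. Your extra paragraph checking that the resulting closed substack is itself coherently complete is a point the paper leaves implicit, so it is a welcome (and correct) elaboration rather than a deviation.
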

\begin{proof}
The conditions are clearly necessary. Conversely, if the conditions of
\itemref{PI:closed} (resp.\ \itemref{PI:iso}) hold, then $f_n$ is adic and
a closed immersion (resp.\ an isomorphism) for every $n\geq 0$ by
\Cref{P:closed/iso-cond:nilpotent}.  Since $f_n$ is adic, we have that
$f_n^{-1}(\stY_m)=\stX_m$ for all $m\leq n$.
Since $\stY$ is coherently complete along $\stY_0$, we obtain a closed substack
$\stZ\inj \stY$ such that $\stZ\times_\stY \stY_n=\stX_n$ for all $n\geq 0$.
Under condition \itemref{PI:iso}, we have that $\stZ=\stY$. Finally, since
$(\stX,\stX_0)$ is complete, we have by Tannaka duality a unique isomorphism
$\stX\to \stZ$ over $\stY$.
\end{proof}

Let $X$ be a quasi-compact and quasi-separated algebraic stack. Recall
\cite[Defn.\ 2.1]{hall-rydh_perfect-complexes} that $X$ is said to
have \emph{cohomological dimension $0$} if $\mathrm{H}^i(X,M) = 0$ for
all $i>0$ and quasi-coherent $\sO_X$-modules $M$. Affine schemes have
cohomological dimension $0$. More generally, cohomologically affine
algebraic stacks that have affine diagonal or are noetherian and
affine-pointed also have cohomological dimension $0$ \cite[Thm.\
C.1]{hall-neeman-rydh_no-compacts}.

\begin{lemma}\label{L:formally_versal}
Let $f\colon (\stZ,\stZ_0) \to (\stX,\stX_0)$ be a morphism of locally noetherian pairs. If $f_n\co \stZ_n \to \stX_n$ is smooth for all $n\geq 0$, then $f$ is formally versal at any morphism $T \to \stZ$ from a quasi-compact and quasi-separated algebraic stack $T$ of cohomological dimension $0$ whose set theoretic image is contained in $|\stZ_0|$.
\end{lemma}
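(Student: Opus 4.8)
The plan is to verify the lifting condition of Definition \ref{D:formally_versal} directly. So suppose we are given nilpotent immersions $T \inj T' \inj T''$, a morphism $T'' \to \stX$, and a morphism $T' \to \stZ$ together with a $2$-isomorphism making the square $2$-commute. Since $T'' \to T'$ is a nilpotent immersion, $T' \to \stZ$ and $T \to \stZ$ have the same (closed) image, so the set-theoretic image of $T''$ in $\stX$ lands in $|\stX_0|$, and likewise the image of $T'$ in $\stZ$ lands in $|\stZ_0|$. Because the immersions are nilpotent, all of $T$, $T'$, $T''$ factor through some fixed infinitesimal neighborhood: there is an integer $n$ such that $T' \to \stZ$ factors through $\stZ_n \inj \stZ$ and $T'' \to \stX$ factors through $\stX_n \inj \stX$ (take $n$ large enough that the $(n+1)$st power of the ideal of $T$ in $T''$ vanishes, using that $T''$ is quasi-compact and the relevant ideal is nilpotent). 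After enlarging $n$ once more we may also assume the square obtained by composing $T' \to \stZ_n \to \stX_n$ agrees, up to the given $2$-isomorphism, with $T' \to T'' \to \stX_n$.

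At this point we have reduced to a lifting problem against the morphism $f_n \colon \stZ_n \to \stX_n$ with source $T'$ and target $T''$, where $T' \inj T''$ is a nilpotent (hence square-zero after filtering, but we will not need that) immersion of algebraic stacks. By hypothesis $f_n$ is smooth. Now I invoke the infinitesimal lifting criterion for smooth morphisms of algebraic stacks: a smooth morphism is formally smooth, so for the square-zero extension obtained by filtering $T' \inj T''$ by powers of its defining ideal, the lift exists provided the relevant obstruction group vanishes and lifts of $2$-morphisms are unobstructed as well. This is where the cohomological dimension $0$ hypothesis on $T$ (equivalently on $T'$, $T''$, which have the same underlying space and hence the same quasi-coherent cohomology vanishing) enters: the obstruction to lifting a map to a smooth stack along a square-zero extension with ideal $\cI$ lies in $\mathrm{H}^1$ of $T'$ with coefficients in $g^* \mathscr{T}_{\stZ_n/\stX_n} \otimes \cI$ (where $g$ is the given map and $\mathscr{T}$ denotes the relative tangent sheaf, which is locally free since $f_n$ is smooth), and the torsor of lifts, once nonempty, is a torsor under $\mathrm{H}^0$ of the same sheaf; the ambiguity in $2$-morphisms is controlled by $\mathrm{H}^0$ of $g^*\mathscr{T}_{\stZ_n/\stX_n}\otimes\cI$ as well. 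Since $T'$ has cohomological dimension $0$, the $\mathrm{H}^1$ vanishes, so the lift exists; iterating over the finite filtration of $T' \inj T''$ by powers of the ideal produces the desired lift $T'' \to \stZ_n \subseteq \stZ$, together with the $2$-morphisms making the whole diagram $2$-commute.

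The main obstacle I anticipate is not any single step but bookkeeping: one must be careful that "formally versal at $T$" in Definition \ref{D:formally_versal} only requires the existence of \emph{some} lift (versality, not smoothness), so it suffices to solve the lifting problem after passing to $\stZ_n$ for one sufficiently large $n$ — there is no need to take an inverse limit or to produce a map to $\stZ$ compatible with all $n$ at once. The second subtlety is that $T''$ is only assumed quasi-compact and quasi-separated, not affine, so the infinitesimal lifting argument must be run sheaf-theoretically (via the cotangent complex / tangent sheaf and its cohomology on $T'$) rather than by choosing an affine chart; this is exactly what the cohomological dimension $0$ hypothesis is designed to make work, and it is also why one filters the nilpotent immersion $T'\inj T''$ into square-zero steps rather than handling it in one go. A clean way to package the infinitesimal lifting input is to cite the standard deformation theory of morphisms to algebraic stacks (e.g.\ via $\LL_{\stZ_n/\stX_n}$), observing that smoothness of $f_n$ gives $\LL_{\stZ_n/\stX_n} \simeq \mathscr{T}_{\stZ_n/\stX_n}^\vee[0]$ perfect in degree $0$, so that all obstruction and difference groups are ordinary sheaf cohomology groups on $T'$, which vanish in positive degree by hypothesis.
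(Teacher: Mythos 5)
Your proposal follows the same route as the paper's proof: reduce to the smooth morphism $f_n$ for $n\gg 0$ (using quasi-compactness of $T''$ and nilpotence of the ideals), filter $T'\inj T''$ into square-zero extensions, and kill the obstruction using cohomological dimension $0$ of $T'$. Two points need repair, though neither derails the argument. First, your justification that $T'$ has cohomological dimension $0$ (``same underlying space as $T$'') is not a proof for quasi-coherent cohomology, since the structure sheaves differ; the correct argument, which the paper gives, is that every quasi-coherent $\sO_{T'}$-module has a finite filtration whose graded pieces are pushforwards of objects of $\QCoh(T)$. Second, your claim that smoothness of $f_n$ gives $\LL_{\stZ_n/\stX_n}\simeq \mathscr{T}^\vee_{\stZ_n/\stX_n}[0]$ concentrated in degree $0$ is false for non-representable morphisms of algebraic stacks (e.g.\ $BG\to \Spec k$ for $G$ smooth): the cotangent complex of a smooth morphism of stacks is only perfect of Tor-amplitude $[0,1]$, so the obstruction lives in $\Ext^1_{T'}(\LL_{\stZ_n/\stX_n}|_{T'},M)$ rather than in $\mathrm{H}^1$ of a locally free sheaf. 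Fortunately this does not matter: that $\Ext^1$ is computed from $\mathrm{H}^{\geq 1}$ of quasi-coherent sheaves on $T'$, all of which vanish by cohomological dimension $0$, exactly as in the paper.
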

\begin{proof}
The lifting criterion in \Cref{D:formally_versal} is equivalent to the same
criterion for the map $f_n\co \stZ_n \to \stX_n$ for $n \gg 0$ large enough
that $\stZ_n$ contains the image of $T'$ and $\stX_n$ contains the image of
$T''$, so by our hypotheses we may assume that the map $f$ is smooth. First
note that $T'$ has cohomological dimension $0$ because any quasi-coherent
$\sO_{T'}$-module admits a finite filtration whose associated graded objects
are pushforwards of objects in $\QCoh(T)$. Also, because we may factor $T' \to
T''$ into a sequence of square-zero extensions, it suffices to verify the
lifting criterion in the case where $T' \to T''$ is a square-zero extension by
some $M \in \QCoh(T')$. In this case the obstruction to the existence of a
dotted arrow is an element in the group
$\Ext^1_{T'}(\LL_{\stZ/\stX}|_{T'},M)$. Since $f$ is smooth, $\LL_{\stZ/\stX}$
is a perfect complex of Tor-amplitude $[0,1]$. Hence, the $\Ext$ group vanishes
as $T'$ has cohomological dimension~$0$.
\end{proof}

\subsection{Proof of \texorpdfstring{\Cref{T:Artin-algebraization}}{Theorem~\ref{T:Artin-algebraization}}}
First we establish an important special case of Artin algebraization for pairs:

\begin{lemma} \label{L:approx_basic_case}
Let $(S,S_0)$ be an excellent affine pair, let $(T,T_0)$ be a complete affine
pair, and let $f\colon (T,T_0)\to (S,S_0)$ be a morphism such that $f_0$ is an
isomorphism and $f_1$ is a closed immersion. Let $\stX$ be a finite type
algebraic stack over $S$, and let $\stZ_0 \hookrightarrow \stZ:=T \times_S
\stX$ be a closed substack over $T_0$. For any $N\geq 0$, there is an affine
\'{e}tale neighborhood $(S',S'_0)\to (S,S_0)$ and a closed substack $\stW \inj
S' \times_S \stX$ such that:
\begin{enumerate}
\item The map $T \to S$ factors through $S'$, and $T_N \to S'_N$ is a closed immersion;
\item\label{TI:approx-last} $T_N \times_T \stZ = S'_N \times_{S'} \stW$ as closed substacks of $S'_N \times_S \stX$. In particular, if $\stW_0 := \stZ_0 \inj \stW$, then the canonical map is an isomorphism $\stZ_N \cong \stW_N$; and
\item\label{TI:approx-graded-iso} There is an isomorphism $\Gr_{\sI_\stZ} \sO_{\stZ} \cong \Gr_{\sI_\stW} \sO_{\stW}$ of graded modules over $\stZ_0 \cong \stW_0$.
\end{enumerate}
\end{lemma}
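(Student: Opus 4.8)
The plan is to reduce the problem to Artin approximation over the henselian (indeed excellent) pair $(S, S_0)$. First I would set up the data that needs to be approximated. Write $S = \Spec A$, $S_0 = \Spec A/\mathfrak{a}$, $T = \Spec B$, $T_0 = \Spec B/\mathfrak{b}$, so that $B$ is $\mathfrak{b}$-adically complete and the map $A \to B$ carries $\mathfrak{a}$ into $\mathfrak{b}$ with $A/\mathfrak{a} \iso B/\mathfrak{b}$ and $A/\mathfrak{a}^2 \surj B/\mathfrak{b}^2$. The closed substack $\stZ_0 \inj \stZ = T\times_S\stX$ is cut out by a coherent ideal $\sI_\stZ \subset \sO_\stZ$; since $\stX$ is of finite type over $S$ and $T\to S$ is the completion map, the substack $\stZ_0$ together with the morphism $T\to S$ is, after choosing a presentation of $\stX$, a finite collection of equations and relations with coefficients in $B$. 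The key point is that all of this is a finite-type-over-$A$ worth of data: the functor $F$ on $A$-algebras sending $A\to R$ to the groupoid of (i) a factorization through a finite-type $A$-algebra together with (ii) a closed substack of the base change of $\stX$, modulo the equivalence of remembering only the $N$-th infinitesimal neighborhood data and the associated graded, is a limit-preserving stack (a category fibered in groupoids locally of finite presentation over $S$).

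Next I would produce the formal point of $F$ to approximate. The completion $B$ equipped with $\stZ_0 \inj \stZ$ and the map $B\to \widehat{B}$ furnishes an element of $F(\widehat{A})$ where $\widehat{A}$ is the $\mathfrak{a}$-adic completion of $A$ — or more precisely, since $B$ is already complete and $\mathfrak{b}$-adically so while $\mathfrak{a}B\subseteq \mathfrak{b}$, one works directly over the pair $(A,\mathfrak{a})$: the object $(T,\stZ)$ defines a compatible system over the infinitesimal neighborhoods $S_n$, hence an object of $\varprojlim_n F(A/\mathfrak{a}^{n+1})$. Artin approximation over the excellent henselian pair $(A,\mathfrak a)$ — in the form used in \cite{conrad-deJong} and \cite[App.~A]{alper-hall-rydh_luna-stacks} — then yields, for the chosen $N$, an étale neighborhood $(S',S_0')\to(S,S_0)$ and an object of $F(A')$ whose restriction to $S'_N$ agrees with the given one to order $N$, i.e.\ a factorization $T\to S'$, a closedness statement $T_N\to S'_N$, a closed substack $\stW\inj S'\times_S\stX$ with $T_N\times_T\stZ = S'_N\times_{S'}\stW$, and — because the associated graded is also finite-type data one can demand agreement of — an isomorphism $\Gr_{\sI_\stZ}\sO_\stZ\cong\Gr_{\sI_\stW}\sO_\stW$ of graded modules over $\stZ_0\cong\stW_0$. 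Setting $\stW_0:=\stZ_0\inj\stW$ gives items (1)–(3); the identification $\stZ_N\cong\stW_N$ in (2) follows since $T_N\times_T\stZ$ and $S'_N\times_{S'}\stW$ already agree and $\stZ_N$ (resp.\ $\stW_N$) is the $N$-th neighborhood of $\stZ_0$ (resp.\ $\stW_0$) inside them.

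The main obstacle I anticipate is purely bookkeeping: packaging "a factorization through a finite-type algebra plus a closed substack plus the graded isomorphism" into a single limit-preserving fibered category to which Artin approximation applies, and checking that the finite-presentation hypotheses are met — in particular that one may take $\stX$ finite type and still treat "closed substack of the base change" as finite-presentation data, using that $\stZ_0 = T_0\times_S\stX$-side is of finite type and noetherian so $\sI_\stZ$ is coherent and generated by finitely many sections. One must also be slightly careful that the graded-module isomorphism in (3) is genuinely an \emph{abstract} isomorphism of $\sO_{\stZ_0}$-modules, not required to be compatible with anything, so it is harmless finite-type data: it suffices to approximate the finitely many graded pieces up to the relevant order, which the noetherian hypothesis guarantees are finitely generated. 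Once the functor-of-points formalism is in place, the result is a direct citation of Artin approximation over henselian excellent pairs, exactly as in \cite[App.~A]{alper-hall-rydh_luna-stacks}, and no further geometry is needed.
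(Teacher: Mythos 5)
There is a genuine gap, and it sits exactly at part (3). You assert that the graded-module isomorphism $\Gr_{\sI_\stZ}\sO_\stZ\cong\Gr_{\sI_\stW}\sO_\stW$ is ``harmless finite-type data'' because ``it suffices to approximate the finitely many graded pieces up to the relevant order.'' This is false: $\Gr_{\sI_\stZ}\sO_\stZ=\bigoplus_{n\ge 0}\sI_\stZ^n/\sI_\stZ^{n+1}$ has infinitely many graded pieces, and Artin approximation only ever gives you agreement to a fixed finite order $N$, which controls $\sI^n/\sI^{n+1}$ for $n\le N$ and says nothing about the higher pieces. An isomorphism $\stZ_N\cong\stW_N$ does not, by itself, yield an isomorphism of the full associated graded modules --- and producing that isomorphism anyway is the entire point of the lemma, since it is what later feeds into \Cref{P:closed/iso-cond:complete} to promote finite-order agreement to an isomorphism of completions. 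Your proposed functor, which only records data up to order $N$, cannot see this.

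The missing idea is the refined Artin--Rees theorem. The paper does not approximate the closed substack (or the factorization $T\to S'$) directly; it first uses \Cref{P:closed/iso-cond:complete} to realize $T\inj\widehat{S}$ as a closed immersion, chooses a presentation $\sO_{\widehat S}^{\oplus n}\to\sO_{\widehat S}\surj\sO_T$, pulls it back to a two-term complex $(\alpha,\beta)$ on $\widehat{S}\times_S\stX$, and approximates \emph{the complex} via the functor $F$ of isomorphism classes of complexes $\sE_2\to\sE_1\to\sO_{U\times_S\stX}$. After enlarging $N$ so that $\alpha$ and $\beta$ satisfy the Artin--Rees condition $\AR{N}$ of \cite[Def.~A.15]{alper-hall-rydh_luna-stacks}, agreement of the approximated complex $(\alpha',\beta')$ with $(\alpha,\beta)$ to order $N$ forces, by \cite[Thm.~A.16]{alper-hall-rydh_luna-stacks}, an isomorphism $\Gr_{\sI_\stZ}(\coker\beta)\cong\Gr_{\sI_\stW}(\coker\beta')$ in \emph{all} degrees. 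This transfer of infinite-order information from finite-order agreement is the crux of Artin's algebraization method (as in \cite{conrad-deJong}), and without it your argument establishes (1) and (2) but not (3).
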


\begin{proof}
Consider the functor $F\colon \Sch_{/S}^\op\to \Set$, where $F(U\to S)$ is the set of isomorphism classes of complexes of finitely presented quasi-coherent $\sO_{U \times_S \stX}$-modules $\sE_2\to \sE_1\to \sO_{U \times_S \stX}$ such that $\sE_1$ is locally free. This functor is locally of finite presentation.

Let $\widehat{S}$ be the completion of $S$ along $S_0$. Then $T \to
\widehat{S}$ is a closed immersion by \Cref{P:closed/iso-cond:complete},
because $(T,T_0)$ is complete, $f_0$ is an isomorphism
and $f_1$ is a closed immersion. Now let
\[
\sO_{\widehat{S}}^{\oplus n} \to \sO_{\widehat{S}} \to \sO_{T}
\]
be a presentation of the structure sheaf of $T \inj \widehat{S}$. Pulling back to $\widehat{S} \times_S \stX$ we get a resolution
\[
\ker(\alpha) \xrightarrow{\alpha} \sO_{\widehat{S} \times_S \stX}^{\oplus n} \xrightarrow{\beta} \sO_{\widehat{S} \times_S \stX} \surj \sO_{T \times_S \stX}.
\]
We regard the pair $(\alpha,\beta)$ as an element of $F(\widehat{S})$. Note that by increasing $N$ if necessary, we may assume that both $\alpha$ and $\beta$ satisfy the Artin--Rees condition $\AR{N}$ of \cite[Def.~A.15]{alper-hall-rydh_luna-stacks} with respect to $\stZ_0$.

Let $(S^h,S_0)$ denote the henselization of the pair $(S,S_0)$. By Artin approximation over henselian pairs \cite[Thm.~3.4]{alper-hall-rydh_etale-local-stacks} one can find a class in $F(S^h)$ which restricts to the same class as $(\alpha,\beta)$ in $F(S_N)$. Then because $S^h$ is constructed as an inverse limit of \'etale neighborhoods of $S_0$, we lift this class in $F(S^h)$ to a class $(\alpha',\beta') \in F(S')$ for some \'etale map $S' \to S$ lying under $S^h$ such that $S' \times_S S_0 \simeq S_0$.

We now let $\stW \inj S' \times_S \stX$ be the closed substack defined by $\im(\beta') \subset \sO_{S' \times_S \stX}$. By construction we have
\[
\sO_{\stW} \otimes_{\sO_{S' \times_S \stX}} \sO_{S'_N} \simeq \coker(\beta'|_{S'_N}) \simeq \sO_{T_N \times_S \stX}
\]
as $\sO_{S' \times_S \stX}$-algebras, which is the second condition of the lemma.

Now consider $(\alpha,\beta) \in F(\widehat{S})$ and the restriction of $(\alpha',\beta')$ to $F(\widehat{S})$. Both complexes are isomorphic after tensoring with $\sO_{S_N}$, and by hypothesis the complex defined by $(\alpha,\beta)$ is exact and satisfies the Artin--Rees criterion $\AR{N}$, so the refined Artin--Rees theorem \cite[Thm.~A.16]{alper-hall-rydh_luna-stacks} implies that
\[
\Gr_{\sI_\stZ} \sO_{\stZ} \cong \Gr_{\sI_\stZ} (\coker(\beta)) \cong \Gr_{\sI_\stW} ( \coker(\beta')) \cong \Gr_{\sI_\stW} \sO_{\stW}.\qedhere
\]
\end{proof}

The following generalizes~\cite[Thm.~A.17]{alper-hall-rydh_luna-stacks}.

\begin{proposition}[Weak Artin algebraization for pairs]\label{P:weak-algebraization}
Let $S$ be an excellent affine scheme, and let $\stX$ be a category fibered in groupoids, locally of finite presentation over $S$. Let $(T,T_0)$ be a
noetherian affine pair over $S$ such that $T_0\to S$ is of finite type.
Let $(\stZ,\stZ_0)\to (T,T_0)$ be a morphism of finite presentation and let
$\eta\co \stZ\to \stX$ be a morphism compatible over $S$.
Fix an integer $N \geq 0$. Then there exists
\begin{enumerate}
\item a pair $(\stW,\stW_0)$ of finite presentation over $S$, together
with a morphism $\xi\co \stW\to \stX$;
\item an isomorphism $\stZ_N \cong \stW_N$ over $\stX$; and
\item an isomorphism $\Gr_{\sI_\stZ}\sO_{\stZ}\cong \Gr_{\sI_\stW}\sO_{\stW}$
  of graded modules over $\stZ_0\cong \stW_0$.
\end{enumerate}
Moreover, if $\stZ$ is fundamental, then one can arrange that $\stW$ is fundamental.
\end{proposition}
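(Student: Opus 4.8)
The plan is to run the argument in the proof of \Cref{L:approx_basic_case}, modified so as to allow $\stZ$ to be an arbitrary stack of finite presentation over $T$ rather than a base change of $\stX$, to allow $\stX$ to be an arbitrary category fibered in groupoids, and to propagate the fundamental structure. After two harmless reductions this comes down to applying Artin approximation over a henselian pair to a suitable finite-presentation functor on $\Sch_{/Y}$, where $Y$ will be an affine space over $S$.

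First, replace $(T,T_0)$ by its $\sI_T$-adic completion $\widehat T$. This is harmless: since the morphism of pairs $(\stZ,\stZ_0)\to(T,T_0)$ gives $\sI_T\sO_\stZ\subseteq\sI_\stZ$, the closed substack $\stZ_N$ is annihilated by $\sI_T^{N+1}$ and $\Gr_{\sI_\stZ}\sO_\stZ$ is $\sI_T$-power torsion, so the data $\stZ_N\to\stX$ and $\Gr_{\sI_\stZ}\sO_\stZ$ are unchanged by the flat base change $\stZ\times_T\widehat T\to\stZ$ (which is still of finite presentation over $\widehat T$, and fundamental if $\stZ$ was); so we may assume $(T,T_0)$ is a complete affine pair. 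Second, picking finitely many elements of $\sO_T$ whose classes generate the $\sO_S$-algebra $\sO_T/\sI_T$ and finitely many generators $t_1,\dots,t_r$ of $\sI_T$, and sending $r$ extra coordinates to the $t_j$, one obtains a surjection from the $(x_1,\dots,x_r)$-adic completion of $\sO_S[y_1,\dots,y_s,x_1,\dots,x_r]$ onto $\sO_T$ taking $(x_1,\dots,x_r)$ onto $\sI_T$. With $Y:=\mathbb A^{s+r}_S$, $Y_0:=\mathbb A^{s}_S\times\{0\}$, and $\widehat Y$ the completion of $Y$ along $Y_0$, this exhibits $(T,T_0)$ as a closed subpair of $(\widehat Y,Y_0)$ for which $T\times_{\widehat Y}Y_n=T_n$ for all $n$; note $Y$ is of finite type over $S$, hence excellent.

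Next fix a finite-type algebraic stack $\stU$ over $Y$ with the resolution property together with a closed immersion $\stZ\inj T\times_Y\stU$ over $T$ (for instance, when $\stZ=[\Spec\mathcal C/\GL_n]$ is fundamental, take $\stU=[\mathbb A^m_Y/\GL_n]$ with the immersion induced by a $\GL_n$-equivariant closed immersion $\Spec\mathcal C\inj\mathbb A^m_T$). Define $F\colon\Sch_{/Y}^\op\to\Set$ by letting $F(U)$ be the set of isomorphism classes of pairs consisting of a complex $\sE_2\to\sE_1\to\sO_{U\times_Y\stU}$ of finitely presented quasi-coherent sheaves with $\sE_1$ locally free, cutting out the closed substack $\stW_U:=V(\im(\sE_1\to\sO_{U\times_Y\stU}))$, together with a morphism $\stW_U\to\stX$ over $S$; as $\stX$ is locally of finite presentation over $S$, so is $F$. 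A locally free resolution $\sE_2\xrightarrow{\alpha}\sE_1\xrightarrow{\beta}\sO_{\widehat Y\times_Y\stU}\surj\sO_\stZ$ (available since $\stU$ has the resolution property) together with $\eta$ gives a class in $F(\widehat Y)$; after enlarging $N$ we may assume $\alpha,\beta$ satisfy $\AR{N}$ with respect to $\stZ_0$ in the sense of \cite[Def.~A.15]{alper-hall-rydh_luna-stacks}.

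Applying Artin approximation over the henselian pair $((\mathbb A^{s+r}_S)^{h},Y_0)$ \cite[Thm.~3.4]{alper-hall-rydh_etale-local-stacks} and spreading out yields an affine \'etale neighborhood $(Y',Y_0)\to(Y,Y_0)$ and a class $(\alpha',\beta';\xi)\in F(Y')$ restricting to $(\alpha,\beta;\eta)$ in $F(Y_N)$. Set $\stW:=\stW_{Y'}=V(\im\beta')\inj Y'\times_Y\stU$, a pair of finite presentation over $S$ equipped with $\xi\colon\stW\to\stX$; it is fundamental whenever $\stZ$ is, since then $Y'\times_Y\stU=[\mathbb A^m_{Y'}/\GL_n]$ is affine over $B\GL_{n}$. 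Restricting to $Y_N=Y'_N$ and using that $\beta$ presents $\sO_\stZ$, one gets $\stW\times_{Y'}Y'_N=\stZ\times_T T_N=:Q$ as closed substacks of $Y'_N\times_Y\stU$ with $\xi|_Q=\eta|_Q$; since $\stZ_0\inj Q$ lies over $Y_0$, setting $\stW_0:=\stZ_0$ exhibits both $\stZ_N$ and $\stW_N$ as the $N$-th infinitesimal neighborhood of $\stZ_0$ in $Q$, whence $\stZ_N\cong\stW_N$ over $\stX$; and because $\beta$ is exact and satisfies $\AR{N}$, the refined Artin--Rees theorem \cite[Thm.~A.16]{alper-hall-rydh_luna-stacks} applied to $\beta$ and $\beta'$ gives $\Gr_{\sI_\stZ}\sO_\stZ\cong\Gr_{\sI_\stW}\sO_\stW$ as graded $\sO_{\stZ_0}$-modules. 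The main obstacle is the design of $F$: it must simultaneously carry a locally free presentation of $\sO_\stZ$ inside a finite-type ambient over $Y$ and a morphism to the possibly non-algebraic, non-finite-type $\stX$, and the ambient must be taken of the form $[\mathbb A^m/\GL_n]$ via a $\GL_n$-equivariant embedding of $\stZ$ to obtain the fundamental refinement; given that, matching the $N$-th neighborhoods and the associated graded modules proceeds along the same lines as in \Cref{L:approx_basic_case}.
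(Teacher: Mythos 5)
Your overall strategy diverges from the paper's at a crucial point, and the divergence creates a genuine gap. The paper does \emph{not} try to embed $\stZ$ into an ambient stack and approximate its ideal sheaf. Instead it writes the completed base as $T=\varprojlim_\lambda T_\lambda$ with $T_\lambda$ affine of finite type over $S$, uses standard limit methods to descend $\stZ\to\stX$ to a finite-presentation algebraic stack $\stZ_\lambda\to T_\lambda$ with $\stZ\cong\stZ_\lambda\times_{T_\lambda}T$, and then replaces $S$ by $T_\lambda$ and $\stX$ by $\stZ_\lambda$. After that reduction, the only object being approximated is a presentation $\sO_{\widehat S}^{\oplus n}\to\sO_{\widehat S}\to\sO_T$ of the structure sheaf of the \emph{affine} closed subscheme $T\inj\widehat S$, pulled back to $\widehat S\times_S\stZ_\lambda$; the map to the original (possibly non-algebraic) $\stX$ is recovered for free as the composite $\stW\inj S'\times_S\stZ_\lambda\to\stZ_\lambda\to\stX$.

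Your route instead requires a finite-type algebraic stack $\stU$ over $Y$ with the resolution property together with a closed immersion $\stZ\inj T\times_Y\stU$. You construct $\stU$ only when $\stZ$ is fundamental (via a $\GL_n$-equivariant embedding into $[\mathbb{A}^m_Y/\GL_n]$), but the proposition places no hypothesis on $\stZ$ beyond finite presentation over $T$: it need not be fundamental, need not have the resolution property, and need not even have affine (or separated) diagonal. A closed substack of $T\times_Y\stU$ with $\stU$ having the resolution property inherits affine diagonal and the resolution property, so for general $\stZ$ no such $\stU$ exists and your functor $F$ cannot be set up. This is the missing idea: some device (the paper's is noetherian approximation of $\stZ$ itself over the base) that reduces the approximation problem to data living on affine schemes, independently of the geometry of $\stZ$. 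A secondary, smaller issue is your assertion that $F$ is locally of finite presentation after you adjoin the datum of a morphism $\stW_U\to\stX$: since $\stW_U$ is a stack (not an affine scheme) and $\stX$ is merely a limit-preserving category fibered in groupoids, the identification $\varinjlim\Map(\stW_{U_i},\stX)\cong\Map(\varprojlim\stW_{U_i},\stX)$ needs an argument of the kind in \cite[App.~B]{rydh_noetherian-approx}; the paper's design of $F$ avoids carrying this datum altogether. Your remaining steps (the closed immersion $T\inj\widehat Y$, matching the $N$-th infinitesimal neighborhoods inside $Q=\stZ\times_T T_N$, and the use of $\AR{N}$ and refined Artin--Rees for the graded isomorphism) are sound and parallel to \Cref{L:approx_basic_case}, and your argument does correctly establish the fundamental case, which is the case used elsewhere in the paper.
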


\begin{proof}
It suffices to prove the claims after base change to the completion of $T$, so we may assume that $T$ is complete along $T_0$. Now write $$T = \varprojlim_\lambda T_\lambda,$$ where $T_\lambda$ is a cofiltered system of affine $S$-schemes of finite type. For $\lambda$ sufficiently large, $T_1 \to T_\lambda$ is a closed immersion. Increasing $\lambda$ if necessary, standard limit methods give us an algebraic stack $\stZ_\lambda$ of finite presentation over $T_\lambda$ fitting into a commutative diagram
\begin{equation} \label{E:approximation_reduction}
\vcenter{\xymatrix{%
\stZ \ar[r]\ar[d] \ar@/^2ex/[rr] & \stZ_\lambda \ar[r]\ar[d] & \stX\ar[d] \\
T\ar[r] & T_\lambda \ar[r]\ar@{}[ul]|\square & S%
}}%
\end{equation}
It now suffices to replace $S$ with $T_\lambda$, and $\stX$ with $\stZ_\lambda$, and to find a stack over $\stZ_\lambda$ meeting the conditions of the theorem. We may therefore assume that $\stX$ is algebraic and of finite presentation over $S$, and that $T_1 \to S$ is a closed immersion, in which case the theorem follows immediately from  \Cref{L:approx_basic_case} with $S_0$ as the
image of $T_0$.

Finally, if $\stZ$ were fundamental, meaning $\stZ$ admits and affine map $f\colon \stZ \to B\GL_{n,\ZZ}$ for some $n$, then in this case one can simultaneously approximate both the map $f$ and the map $\stZ \to \stX$ by replacing $\stX$ with $\stX \times_S (B\GL_{n,S})$ in the argument above. The map $\stZ \to B \GL_{n,S}$ is affine, so \cite[Thm.~C]{rydh_noetherian-approx} guarantees that we can arrange for $\stZ_\lambda$ in \eqref{E:approximation_reduction} to be affine over $B \GL_{n,S}$ as well. The stack $\stW$ constructed in \Cref{L:approx_basic_case} will be affine over $B\GL_{n,S}$ as well, hence fundamental.
\end{proof}

We now prove our main algebraization theorem:

\begin{proof}[Proof of \Cref{T:Artin-algebraization}]
Let $T$ be the good moduli space of $\stZ$ and $T_0$ the good moduli space of $\stZ_0$. Choose an $N \geq 1$. Then $T_0\to S$ is of finite type, so \Cref{P:weak-algebraization} produces a stack $\stW$ satisfying the first two conditions of the theorem along with a map $\xi\colon \stW \to \stX$ and an isomorphism $\psi_N\colon \stW_N \cong \stZ_N$ over $\stX$. 

We would like to extend the isomorphism $\psi_N$ to a compatible sequence of isomorphisms $\psi_n\colon \stW_n \to \stZ_n$ over $\stX$ for all $n\geq N$. Extending the map $\psi_n$ to $\psi_{n+1}$ is equivalent to finding a dotted arrow such that the diagram
\[
\xymatrix@+2ex{ \stW_n\ar[d] \ar[r]^{\psi_n} & \stZ \ar[d]^{\eta} \\ \stW_{n+1} \ar@{-->}[ur]^{\psi_{n+1}} \ar[r]_-{\xi|_{\stW_{n+1}}} & \stX }
\]
is $2$-commutative. It is possible to do this for all $n \geq N$ because by hypothesis the map $\eta$ is formally versal at $\stW_0 = \stZ_0$ (See \Cref{D:formally_versal}). The resulting sequence of maps $\psi_n\colon \stW_n \to \stZ_n$ and the induced map $\widehat{\psi}\colon \widehat{\stW} \to \stZ$ are isomorphisms by \Cref{P:closed/iso-cond:complete} and part (3) of \Cref{P:weak-algebraization}. If we define $\varphi$ to be the inverse of $\psi$ followed by the canonical map $\widehat{\stW} \to \stW$, then by construction we have a compatible sequence of $2$-isomorphisms $\xi \circ \varphi |_{\stZ_{n}} \cong \eta|_{\stZ_{n}}$ for all $n\geq 1$.

If $\stX$ is an algebraic stack with quasi-separated diagonal, then the stack $I := \Isom_{\stZ} (\xi \circ \varphi, \eta)$ is a quasi-separated algebraic space, locally of finite type over $\stZ$. The $2$-isomorphisms $\xi \circ \varphi |_{\stZ_n} \cong \eta|_{\stZ_n}$ give a compatible sequence of sections $\sigma_n$ of $I \to \stZ$ over $\stZ_n$ for all $n \geq 1$. The image of all of the $\sigma_n$ lie in some quasi-compact open substack $I' \subset I$, so we may replace $I$ with $I'$. Then Tannaka duality implies that there is a unique section $\sigma\colon \stZ \to I' \subset I$ of $I \to \stZ$, which corresponds to a $2$-isomorphism $\xi \circ \varphi \simeq \eta$ satisfying the conditions of the theorem.
\end{proof}

\end{section}

%%%%%%%%%%%%%%%%%%%%%%%%%%%%%%%%%%%%%%%%%%%%%%%%%%%%%%%%%%%%%%%%%%%%%%%%%

\begin{section}{Affine \'etale neighborhoods}
In this section we prove the existence of affine \'etale neighborhoods (\Cref{T:affine-etale-nbhds}). 
\begin{proof}[Proof of \Cref{T:affine-etale-nbhds}] \quad 

\smallskip
\noindent {\bf Step 1: Reduction to \texorpdfstring{$\stX$}{X} of finite presentation over \texorpdfstring{$\ZZ$}{Z}.} 
We may replace $\stX$ with an open
quasi-compact neighborhood of the image of $W_0$. Then $\stX$ is quasi-compact and
quasi-separated and hence of approximation
type~\cite{rydh_noetherian-general}\footnote{When $f_0$ is \'etale, we do not
need \cite{rydh_noetherian-general}. Indeed, then $\stX_0$ is Deligne--Mumford
so
after replacing $\stX$ with an open neighborhood of $\stX_0$, we may assume that
$\stX$ is Deligne--Mumford, hence of global type
and approximation type~\cite[Def.~2.1, Prop.~2.10]{rydh_noetherian-approx}.}.

We can thus write $\stX_0$ as the intersection of finitely presented closed
immersions $\stX_\lambda\inj \stX$~\cite[Thm.~D]{rydh_noetherian-approx}. Using
standard limit methods, we can thus, for
sufficiently large $\lambda$, find an \'etale (resp.\ smooth) morphism
$f_\lambda \co W_\lambda\to \stX_\lambda$ that restricts to
$f_0 \co W_0 \to \stX_0$~\cite[App.~B]{rydh_noetherian-approx}.
After replacing $f_0$ with $f_\lambda$ we can thus assume that
$\stX_0\inj \stX$ is of finite presentation.

Using~\cite[Thm.~D]{rydh_noetherian-approx} we can now write $\stX$ as an
inverse limit of stacks of finite presentation over $\Spec \ZZ$. Using standard
limit methods, we can thus arrange so that the \'etale (resp.\ smooth) map
$f_0\co W_0\to \stX_0$ and the closed immersion $\stX_0\inj \stX$ arise as the
pull-backs from stacks of finite presentation over $\Spec
\ZZ$~\cite[App.~B]{rydh_noetherian-approx}.

In the two reduction steps above, we can also arrange so that $W_0$ remains
affine by~\cite[Thm.~C]{rydh_noetherian-approx}. We can thus assume that $\stX$
is of finite presentation over $\Spec \ZZ$.

\smallskip
\noindent {\bf Step 2: Existence of affine formal neighborhoods.} 
Let $\stX_n$ denote the $n$th infinitesimal neighborhood of $\stX_0$ in $\stX$. We claim that $f_0 \co W_0 \to \stX_0$ lifts to a compatible sequence of cartesian squares
\[
  \xymatrix{
    W_{n-1} \ar[r] \ar[d]_{f_{n-1}} & W_n \ar[d]^{f_n} \\
     \stX_{n-1} \ar[r] & \stX_n
  } 
\]
 such that each $f_n$ is \'etale (resp.\ smooth).  Indeed, by
  \cite[Thm.~1.4]{olsson_def-theory-of-stacks}, the obstruction to lifting
  $f_{n-1}$ to $f_n$ belongs to the group
  \[
  \Ext^2_{\oh_{W_{0}}}\bigl(\LL_{W_0/\stX_0},f_0^*(\cI^n/\cI^{n+1})\bigr),
  \]
  where $\cI$ is the coherent ideal sheaf defining $\stX_0 \hookrightarrow \stX$. 
  This group is zero since $\LL_{W_0/\stX_0} = \Omega_{W_0/\stX_0}[0]$ is a vector bundle 
  and $W_0$ is affine.
  
Since $W_0$ is affine, each $W_n$ is also affine~\cite[Cor.~3.6]{knutson_alg_spaces}, \cite[Cor.~8.2]{rydh_noetherian-approx}.  It follows from \cite[Cor.~0.7.2.8]{egaI} that
$Z:= \Spec \big( \varprojlim_n \Gamma(W_n,\oh_{W_n}) \big)$ is a noetherian 
affine scheme complete along $W_0$ such that
$W_i$ is the $i$th infinitesimal neighborhood of $W_0$ in $Z$.
By Tannaka duality~\cite{hall-rydh_coherent-tannaka-duality}, there is an
induced morphism $\eta \co Z\to \stX$ which is formally versal at $W_0$
(\Cref{L:formally_versal}). Note that Tannaka duality applies because we
assume that $\stX$ has affine stabilizers.

\smallskip
\noindent {\bf Step 3: Existence of \'etale neighborhoods.} 
Applying  
  Artin algebraization for pairs (\Cref{T:artin-algebraization-intro}) yields an affine scheme $W$ of finite type over $\Spec \ZZ$, a closed immersion $W_0 \inj W$, an isomorphism $\widehat{W} \to Z$, and a morphism $f \co W \to \stX$ extending $\eta|_{W_n}$ for all $n$; in particular, $f \co W \to \stX$ is \'etale (resp.\ smooth) along $W_0$.  The preimage $f^{-1}(\stX_0)$ is a closed subscheme of $W$ which agrees with $W_0$ after restricting to the Zariski-localization of $W$ along $W_0$.  Therefore, there is an affine open subscheme $W' \subset W$ containing $W_0$ such that $f|_{W'}$ extends $f_0$. This finishes the
proof of \Cref{T:affine-etale-nbhds}.
\end{proof}

\end{section}

%%%%%%%%%%%%%%%%%%%%%%%%%%%%%%%%%%%%%%%%%%%%%%%%%%%%%%%%%%%%%%%%%%%%%%%%%

\begin{section}{Existence of geometric pushouts}
In this section, we prove \Cref{T:Ferrand}, on the existence of pushouts of algebraic stacks. The exposition will follow \cite[App.\ A]{hall_coherent-versality} closely, where a useful special case of this result was established. We begin with a definition.
\begin{definition}
Fix a $2$-commutative square of algebraic stacks 
  \[
  \xymatrix{\stX_0 \ar@{^(->}[r]^{{i}} \ar[d]_f
    \drtwocell<\omit>{\alpha} & \stX_1 \ar[d]^{f'}\\ \stX_2
    \ar@{^(->}[r]_{{i}'} & \stX_3,}
  \]
  where ${i}$ and ${i}'$ are closed immersions and $f$ and $f'$
  are affine. If the induced map
  \[
  \sO_{\stX_3} \to {i}'_*\sO_{\stX_2} \times_{({i}'f)_*\sO_{\stX_0}}
  f'_*\sO_{\stX_1}
  \]
  is an isomorphism of sheaves, then we say that the square is a 
  \emph{geometric pushout}, and that $\stX_3$
  is a \emph{geometric pushout} of the diagram $[\stX_2 \xleftarrow{f}
  \stX_0 \xrightarrow{{i}} \stX_1]$. 
\end{definition}
The main result of this section is the following refinement of \Cref{T:Ferrand}. It also generalizes \cite[Prop.\ A.2]{hall_coherent-versality} from the case of a locally nilpotent closed immersion to a general closed immersion. 
\begin{theorem}\label{T:geometric-pushout}
Any diagram of algebraic stacks $[\stX_2 \xleftarrow{f} \stX_0 \xrightarrow{i} \stX_1]$, where $i$ is a closed immersion, $f$ is affine, and $\stX_1$ is quasi-separated, admits a geometric pushout $\stX_3$. The resulting geometric pushout square is $2$-cartesian and $2$-cocartesian in the $2$-category of algebraic stacks with quasi-separated diagonals. If $\stX_1$ and $\stX_2$ are quasi-compact (resp.\ quasi-separated, Deligne--Mumford, algebraic spaces, affine schemes), then so is $\stX_3$. 
\end{theorem}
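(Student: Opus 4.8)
The plan is to reduce to the affine case and then glue. The affine case is the classical theory of \emph{Milnor squares} \cite[\S 2]{milnor_intro-alg-K-theory}: if $\stX_j=\Spec A_j$ for $j=0,1,2$ with $A_1\surj A_0$ and $A_0$ an $A_2$-algebra, then $A_3:=A_1\times_{A_0}A_2$ defines the geometric pushout $\Spec A_3$. I would first record the standard properties of this construction: (i) the square is $2$-cartesian, i.e.\ $A_0\cong A_1\otimes_{A_3}A_2$; (ii) $\Spec A_2\inj\Spec A_3$ is a closed immersion whose open complement is carried isomorphically onto $\Spec A_1\setminus\Spec A_0$ by the affine morphism $\Spec A_1\to\Spec A_3$; (iii) formation of $A_3$ commutes with flat base change along $\Spec A_3$, in particular with localizations and with \'etale base change; and (iv) \emph{Milnor patching}: base change gives an equivalence between the category of \'etale (resp.\ smooth, resp.\ affine, resp.\ quasi-affine, resp.\ finitely presented flat) $A_3$-algebras and the category of compatible triples of such objects over $A_1$, $A_2$ glued over $A_0$. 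These yield that $\Spec A_3$ is both $2$-cartesian and $2$-cocartesian in the relevant $2$-category and that quasi-compactness, quasi-separatedness, and being Deligne--Mumford, an algebraic space, or affine, descend along the four arrows of the square.

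For the general statement I would work with charts that are smooth, and \'etale when $\stX_0,\stX_1,\stX_2$ are Deligne--Mumford. Since a geometric pushout is unique when it exists, and since (ii) matches quasi-compact opens of $\stX_3$ with compatible quasi-compact opens of $\stX_2$ and of $\stX_1\setminus\stX_0$, one reduces to $\stX_1,\stX_2$ quasi-compact, hence quasi-separated, hence $\stX_0$ quasi-separated as well; Noetherian approximation then reduces to $\stX_0,\stX_1,\stX_2$ of finite presentation over $\Spec\ZZ$. The crucial local input is now: given a point of $|\stX_0|$, pick an affine scheme $V_2$ with a (smooth, or \'etale in the Deligne--Mumford case) morphism $V_2\to\stX_2$ whose image meets that point, set $V_0:=V_2\times_{\stX_2}\stX_0$ --- an affine scheme, affine and smooth (resp.\ \'etale) over $\stX_0$ --- and apply \Cref{T:affine-etale-nbhds} to the closed immersion $\stX_0\inj\stX_1$ and the morphism $V_0\to\stX_0$ to obtain an affine scheme $V_1$ with a smooth (resp.\ \'etale) morphism $V_1\to\stX_1$ and an isomorphism $V_1\times_{\stX_1}\stX_0\cong V_0$ over $\stX_0$. (For arbitrary algebraic stacks one first replaces $\stX_1$ smooth-locally by a scheme using a presentation so that \Cref{T:affine-etale-nbhds} applies, or invokes \Cref{T:artin-algebraization-intro} in the form used to prove it.) Shrinking $V_1$ to the locus where it is smooth (resp.\ \'etale) over $\stX_1$ and forming the affine Milnor-square pushout $V_3:=\Spec\bigl(\Gamma(V_1,\oh_{V_1})\times_{\Gamma(V_0,\oh_{V_0})}\Gamma(V_2,\oh_{V_2})\bigr)$ produces, by the affine case, a chart carrying a closed immersion from $V_2$, an affine morphism from $V_1$, and (by (ii)) an open immersion from $V_1\setminus V_0$ onto the complement of $V_2$, all compatible with $i$ and $f$.

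The main obstacle is to glue these $V_3$ (together with the open charts $\stX_1\setminus\stX_0$, which map to $\stX_3$ by the open immersion dictated by (ii)) into an algebraic stack. The difficulty is to show that the fibre product of two charts $V_3,V_3'$ over the not-yet-constructed $\stX_3$ is again the Milnor-square pushout of $V_2\times_{\stX_2}V_2'\leftarrow V_0\times_{\stX_0}V_0'\rightarrow V_1\times_{\stX_1}V_1'$, and that the resulting descent data satisfy the cocycle condition. This uses (iii) and Milnor patching (iv): a fibre product over $\stX_1$ of two smooth (resp.\ \'etale) extensions of $V_0$ is a smooth (resp.\ \'etale) extension of $V_0\times_{\stX_0}V_0'$, which is a statement smooth- (resp.\ \'etale-) local on $\stX_1$ near $\stX_0$ and so reduces to the affine Milnor case, while the compatibilities built into the extensions of \Cref{T:affine-etale-nbhds} make the various restrictions agree. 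Granting this, the charts form a smooth (resp.\ \'etale) groupoid presentation and $\stX_3$ is an algebraic stack.

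Finally, with $\stX_3$ constructed, the remaining assertions are checked on charts: Milnor patching identifies $\oh_{\stX_3}$ with $i'_*\oh_{\stX_2}\times_{(i'f)_*\oh_{\stX_0}}f'_*\oh_{\stX_1}$, so the square is a geometric pushout; that it is $2$-cartesian and $2$-cocartesian in the $2$-category of algebraic stacks with quasi-separated diagonals, and that quasi-compactness, quasi-separatedness, and being Deligne--Mumford, an algebraic space, or affine pass from $\stX_1,\stX_2$ to $\stX_3$, reduce via the charts to properties (i)--(iv) (in the affine case, $\stX_3=\Spec A_3$ directly). As in the proof of \Cref{T:Artin-algebraization}, the various $2$-isomorphisms produced on charts or on infinitesimal neighbourhoods are upgraded to genuine $2$-isomorphisms using Tannaka duality together with the quasi-separatedness of the diagonals.
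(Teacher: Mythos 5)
Your overall strategy---the affine Milnor square as the base case, affine smooth (resp.\ \'etale) charts on $\stX_2$ pulled back to $\stX_0$ and extended across the closed immersion into $\stX_1$ via \Cref{T:affine-etale-nbhds}, then gluing the resulting affine pushouts $V_3$---is the same as the paper's. But the step you yourself flag as ``the main obstacle'' is left unresolved, and it is a genuine gap. To build a groupoid presentation you need the spaces $V_3\times_{\stX_3}V_3'$, which should be the geometric pushouts of $[\,V_2\times_{\stX_2}V_2'\leftarrow V_0\times_{\stX_0}V_0'\rightarrow V_1\times_{\stX_1}V_1'\,]$. For a general quasi-separated $\stX_1$ these fibre products of affine charts are \emph{not} affine (they are pullbacks of the diagonal of $\stX_1$), so your claim that the required statement ``reduces to the affine Milnor case'' does not hold: you need the existence of geometric pushouts for diagrams of non-affine spaces/stacks, which is exactly what you are trying to prove. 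The same issue recurs for the triple fibre products needed for the cocycle condition. The paper closes this circle with a descending induction on the affineness of the $d$th diagonal: if the $\stX_j$ have affine $d$th diagonal, then the double and triple fibre products of charts have affine $(d-1)$st diagonal, so their pushouts exist by the inductive hypothesis; these pushouts assemble into a smooth groupoid by the uniqueness of \Cref{lem:sch_pushouts1} and the permanence properties of \Cref{lem:sadm_fl_loc_SA3}, and the quotient is a geometric pushout by \Cref{lem:geom_po_ferr_new}\itemref{lem:geom_po_ferr_new:item:fl_loc_gpo}. Without this (or an equivalent device) your construction does not terminate.

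Two smaller points. First, your preliminary reductions to $\stX_1,\stX_2$ quasi-compact and then, via noetherian approximation, to finite presentation over $\Spec\ZZ$ are both unnecessary and, in the second case, unjustified: it is not clear that the geometric pushouts of the approximating diagrams recover the geometric pushout of the limit, and nothing in the argument requires noetherian hypotheses. Second, the $2$-cocartesian property is not a purely chart-local consequence of affine Milnor patching: to map $\stX_3$ to an arbitrary quasi-separated target $\stW$ one must produce compatible smooth charts of $\stW$ over $\stX_1$ and $\stX_2$ glued over $\stX_0$, and this again requires \Cref{T:affine-etale-nbhds} (to extend an \'etale chart from $U_0$ to $U_1$), not Tannaka duality; see the proof of \Cref{lem:sch_pushouts1}.
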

We will need the following two lemmas---the first is precisely 
\cite[Lem.\ A.3]{hall_coherent-versality} and the second is a mild extension of  
\cite[Lem.\ A.4]{hall_coherent-versality}.
\begin{lemma}\label{lem:geom_po_ferr_new}
  Fix a $2$-commutative square of algebraic stacks
    \[
  \xymatrix@-0.8pc{\stX_0 \ar@{^(->}[r]^{{i}} \ar[d]_f
    \drtwocell<\omit>{} & \stX_1 \ar[d]^{f'}\\ \stX_2
    \ar@{^(->}[r]_{{i}'} & \stX_3.}
  \]
  \begin{enumerate}
  \item\label{lem:geom_po_ferr_new:item:2cart} If the square is a
    geometric pushout, then it is $2$-cartesian. 
  \item\label{lem:geom_po_ferr_new:item:flb_gpo} If the square
    is a geometric pushout, then it 
    remains so after flat base change on $\stX_3$.
  \item\label{lem:geom_po_ferr_new:item:fl_loc_gpo} If after
    fppf base change on $\stX_3$ the square  
    is a geometric pushout, then it was a geometric pushout prior to
    base change.
  \end{enumerate}
 \end{lemma}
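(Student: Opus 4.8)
The plan is to reduce the whole lemma to two elementary facts about quasi-coherent sheaves: pushforward of quasi-coherent modules along an \emph{affine} morphism commutes with arbitrary base change, and \emph{flat} pullback is exact, hence commutes with the finite limit defining a fibre product of modules. Write $\phi$ for the canonical map $\sO_{\stX_3} \to i'_*\sO_{\stX_2}\times_{(i'f)_*\sO_{\stX_0}}f'_*\sO_{\stX_1}$; since $i'$, $f'$, and $i'f = f'i$ are all affine, the three pushforwards and their fibre product are quasi-coherent. For any $g\colon \stX_3'\to\stX_3$, I would first record that $g^*$ carries $\phi$ to the analogous map $\phi'$ attached to the square base-changed along $g$: the affine-morphism base-change isomorphisms identify $g^*(i'_*\sO_{\stX_2})$, $g^*(f'_*\sO_{\stX_1})$ and $g^*((i'f)_*\sO_{\stX_0})$ with the corresponding pushforwards computed on $\stX_3'$, and when $g$ is flat the exactness of $g^*$ converts $g^*$ of the fibre product into the fibre product of the $g^*$'s. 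Parts \itemref{lem:geom_po_ferr_new:item:flb_gpo} and \itemref{lem:geom_po_ferr_new:item:fl_loc_gpo} then drop out: $g^*\phi\cong\phi'$, so if $\phi$ is an isomorphism then so is $\phi'$, which is \itemref{lem:geom_po_ferr_new:item:flb_gpo}; and if moreover $g$ is faithfully flat, then by faithfully flat descent of isomorphisms of quasi-coherent modules, $\phi$ is an isomorphism as soon as $g^*\phi\cong\phi'$ is, which is \itemref{lem:geom_po_ferr_new:item:fl_loc_gpo}.

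For part \itemref{lem:geom_po_ferr_new:item:2cart} I would use \itemref{lem:geom_po_ferr_new:item:flb_gpo} to reduce to the case $\stX_3=\Spec R$ an affine scheme: pick a smooth surjection $\coprod_\alpha U_\alpha\to\stX_3$ from a disjoint union of affine schemes and pull back along each $U_\alpha\to\stX_3$. By \itemref{lem:geom_po_ferr_new:item:flb_gpo} each base-changed square is still a geometric pushout, all three corners besides $U_\alpha$ become affine schemes (a closed subscheme of $U_\alpha$, and two affine $U_\alpha$-schemes, using that $i'$ is a closed immersion and $f'$, $i'f$ are affine), and being $2$-cartesian may be checked after the smooth surjective base change $\coprod_\alpha U_\alpha\to\stX_3$. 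So it suffices to treat a square of affine schemes corresponding to rings $R\to A$ surjective (with $A=R/I$), $B\to C$ surjective, $A\to C$, and $R\to B$, with $R\iso A\times_C B$, and to show the natural surjection $B\otimes_R A\to C$ is an isomorphism. Here the description $R=A\times_C B$ shows directly that $R\to B$ carries $I=\ker(R\to A)$ bijectively onto $\ker(B\to C)$, hence $IB=\ker(B\to C)$ and $B\otimes_R A = B/IB = C$; descending the resulting isomorphism back along $\coprod_\alpha U_\alpha\to\stX_3$ yields \itemref{lem:geom_po_ferr_new:item:2cart}.

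I expect the only real content to lie in part \itemref{lem:geom_po_ferr_new:item:2cart}, and within it the main obstacle is purely organizational: one must prove \itemref{lem:geom_po_ferr_new:item:flb_gpo} first, and one must be careful that after smooth base change the three non-base corners genuinely become affine schemes, so that the $2$-categorical subtleties of the general statement evaporate; once the reduction to the affine case is set up, the ring-theoretic identity is a one-line consequence of the definition of $A\times_C B$. (This lemma is in any event \cite[Lem.~A.3]{hall_coherent-versality}, so one may alternatively just cite it.)
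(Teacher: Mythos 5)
Your proof is correct and follows essentially the same route as the paper: parts \itemref{lem:geom_po_ferr_new:item:flb_gpo} and \itemref{lem:geom_po_ferr_new:item:fl_loc_gpo} are the "trivial applications of flat descent" (which you rightly spell out via affine base change for pushforwards and exactness/faithfulness of flat pullback), and part \itemref{lem:geom_po_ferr_new:item:2cart} is reduced smooth-locally to the affine case using \itemref{lem:geom_po_ferr_new:item:flb_gpo}. The only difference is that where the paper cites Ferrand's Theorem 2.2 for the affine Milnor-square computation, you supply the short direct argument $IB=\ker(B\to C)$, hence $B\otimes_R A\cong C$, which is valid.
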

\begin{proof}
  The claim
  \itemref{lem:geom_po_ferr_new:item:2cart} is local on $\stX_3$ for the
  smooth topology, thus we may assume that everything in sight is
  affine---whence the result follows from \cite[Thm.\ 2.2]{ferrand_conducteur}.
  Claims \itemref{lem:geom_po_ferr_new:item:flb_gpo} and
  \itemref{lem:geom_po_ferr_new:item:fl_loc_gpo} are trivial
  applications of flat descent. 
\end{proof}
\begin{lemma}\label{lem:sadm_fl_loc_SA3}
  Consider a $2$-commutative diagram of algebraic stacks 
  \[
  \xymatrix@R-1.6pc@C-.5pc{   & \stU_0 \ar[dd]|!{[dl];[d]}\hole \ar[dl]
    \ar@{^(->}[rr] & & 
    \stU_1 \ar[dl] 
    \ar[dd] \\ \stU_2
    \ar[dd]  \ar@{^(->}[rr] & & \stU_3 \ar[dd]  &   \\   & \stX_0
    \ar@{^(->}[rr]|!{[r];[dr]}\hole 
    \ar[dl] & & \stX_1 \ar[dl]\\ 
    \stX_2 \ar@{^(->}[rr] & &\stX_3  &   } 
  \]
  where the back and left faces of the cube are $2$-cartesian and the
  top and bottom faces are geometric pushout squares. Then all
  faces of the cube are $2$-cartesian. Moreover, if the morphisms
  $\stU_1 \to \stX_1$ and $\stU_2 \to \stX_2$ have one of the following properties:
  \begin{enumerate}
  \item \label{lem:sadm_fl_loc_SA3:flat} flat;
  \item \label{lem:sadm_fl_loc_SA3:surj} surjective;
  \item \label{lem:sadm_fl_loc_SA3:ft} locally of finite type;
  \item \label{lem:sadm_fl_loc_SA3:fp} flat and locally of finite presentation; or
  \item \label{lem:sadm_fl_loc_SA3:smooth} smooth.
  \end{enumerate} 
   then the morphism $\stU_3 \to \stX_3$
  has the same property. 
\end{lemma}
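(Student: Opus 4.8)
The plan is to reduce the whole statement to the case where the eight vertices of the cube are affine schemes, where it becomes (a mild strengthening of) Ferrand's gluing theorem \cite[Thm.~2.2]{ferrand_conducteur}. First I would reduce to the case that $\stX_3$ — hence, as $\stX_2\inj\stX_3$ is a closed immersion and $\stX_1\to\stX_3$ is affine, also $\stX_0,\stX_1,\stX_2$ — is an affine scheme. All of the conclusions ($2$-cartesianness of each face, and each of the properties \itemref{lem:sadm_fl_loc_SA3:flat}--\itemref{lem:sadm_fl_loc_SA3:smooth} for $\stU_3\to\stX_3$) are smooth-local on $\stX_3$, and base changing the cube along a smooth surjection $V\to\stX_3$ with $V$ affine preserves the hypotheses: the bottom face stays a geometric pushout by \Cref{lem:geom_po_ferr_new}\itemref{lem:geom_po_ferr_new:item:flb_gpo}, the top face stays one by the same result applied to the flat morphism $\stU_3\times_{\stX_3}V\to\stU_3$, the back and left faces stay $2$-cartesian, and properties \itemref{lem:sadm_fl_loc_SA3:flat}--\itemref{lem:sadm_fl_loc_SA3:smooth} are stable under base change. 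Next, since $2$-cartesianness of the front and right faces and properties \itemref{lem:sadm_fl_loc_SA3:flat}, \itemref{lem:sadm_fl_loc_SA3:ft}, \itemref{lem:sadm_fl_loc_SA3:fp} and \itemref{lem:sadm_fl_loc_SA3:smooth} are all local on $\stU_3$, I would replace $\stU_3$ by an affine open; this again preserves the hypotheses (the top face gets base changed along the flat open immersion into $\stU_3$, and the back and left faces remain $2$-cartesian). Now $\stU_3$ is affine, hence so are $\stU_2$ (closed in $\stU_3$), $\stU_1$ (affine over $\stU_3$) and $\stU_0$ (closed in $\stU_1$): the whole cube consists of affine schemes and every vertical map $\stU_i\to\stX_i$ is affine.

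In this affine situation, taking global sections of the two geometric-pushout conditions gives $\sO(\stX_3)\cong\sO(\stX_2)\times_{\sO(\stX_0)}\sO(\stX_1)$ and $\sO(\stU_3)\cong\sO(\stU_2)\times_{\sO(\stU_0)}\sO(\stU_1)$, where the transition maps are those induced by the closed immersions and affine morphisms in the top and bottom faces. Thus $\stX_3$ is the Ferrand pushout of $\stX_2\xleftarrow{f}\stX_0\xrightarrow{i}\stX_1$, and, using the $2$-cartesianness of the back and left faces to identify $\stU_1\times_{\stX_1}\stX_0\cong\stU_0\cong\stU_2\times_{\stX_2}\stX_0$, the affine $\stX_3$-scheme $\stU_3$ is precisely the one that Ferrand's construction attaches to the gluing datum $(\stU_1\to\stX_1,\ \stU_2\to\stX_2,\ \sigma)$. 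Ferrand's gluing theorem \cite[Thm.~2.2]{ferrand_conducteur} then says that the functor $W\mapsto(W\times_{\stX_3}\stX_1,\ W\times_{\stX_3}\stX_2,\ \cdot\,)$ from affine $\stX_3$-schemes to gluing data is an equivalence with this construction as quasi-inverse; applied to $W=\stU_3$ this yields canonical isomorphisms $\stU_i\cong\stU_3\times_{\stX_3}\stX_i$ for $i=1,2$, i.e.\ the front and right faces are $2$-cartesian (the remaining faces being so by hypothesis and by \Cref{lem:geom_po_ferr_new}\itemref{lem:geom_po_ferr_new:item:2cart}). Finally, assuming $\stU_1\to\stX_1$ and $\stU_2\to\stX_2$ — and hence $\stU_0\to\stX_0$, by the back face — have one of the properties \itemref{lem:sadm_fl_loc_SA3:flat}--\itemref{lem:sadm_fl_loc_SA3:smooth}, so does $\stU_3\to\stX_3$: for \itemref{lem:sadm_fl_loc_SA3:flat}, \itemref{lem:sadm_fl_loc_SA3:ft} and \itemref{lem:sadm_fl_loc_SA3:fp} this is the compatibility of Ferrand's equivalence with flatness and with (locally of) finite type and presentation; property \itemref{lem:sadm_fl_loc_SA3:smooth} follows from \itemref{lem:sadm_fl_loc_SA3:fp} together with the observation that, by the $2$-cartesianness just proved, the fibres of $\stU_3\to\stX_3$ over points of $\stX_2$ are fibres of $\stU_2\to\stX_2$ and the fibres over the remaining points are fibres of $\stU_1\to\stX_1$, hence all smooth; and for \itemref{lem:sadm_fl_loc_SA3:surj} — which is not local on $\stU_3$, so for it one does not pass to an affine open — one uses that $|\stX_3|$ is the set-theoretic pushout $|\stX_2|\sqcup_{|\stX_0|}|\stX_1|$, so that $\stU_3\to\stX_3$ hits $|\stX_2|$ through $\stU_2\times_{\stX_3}\stX_2\cong\stU_2$ and its complement through $\stU_3\times_{\stX_3}\stX_1\cong\stU_1$.

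The hard part will be the affine case: one must line up the closed-immersion/affine-morphism roles in the cube exactly with the hypotheses of Ferrand's theorem, and — since the five properties are not all literally part of the flat-module form of Ferrand's gluing — supply the supplementary arguments for \itemref{lem:sadm_fl_loc_SA3:surj} (via the topological pushout together with the new $2$-cartesian squares) and \itemref{lem:sadm_fl_loc_SA3:smooth} (via fibres). The reduction steps are routine descent and localization arguments, and the back, left, top and bottom faces of the cube are $2$-cartesian either by hypothesis or by \Cref{lem:geom_po_ferr_new}\itemref{lem:geom_po_ferr_new:item:2cart}.
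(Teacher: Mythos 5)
Your proposal is correct and takes essentially the same route as the paper: reduce to the all-affine case by smooth-localizing on $\stX_3$ and $\stU_3$ (using \Cref{lem:geom_po_ferr_new}\itemref{lem:geom_po_ferr_new:item:flb_gpo}), identify the affine case with Ferrand's gluing theorem \cite[Thm.\ 2.2]{ferrand_conducteur} to get the cartesian front and right faces and flatness, handle surjectivity via $|\stX_1|\amalg|\stX_2|\to|\stX_3|$ being surjective, and deduce smoothness from \itemref{lem:sadm_fl_loc_SA3:fp} by the fibrewise criterion. The one place you undersell the work is \itemref{lem:sadm_fl_loc_SA3:ft} and \itemref{lem:sadm_fl_loc_SA3:fp}: Ferrand's theorem is a statement about \emph{modules}, so finiteness of the algebra $B_3=B_2\times_{B_0}B_1$ over $A_3$ is not literally ``compatibility of the equivalence'' --- the paper writes $B_3$ as a filtered union of finite-type subalgebras and uses Ferrand's zero-detection of modules for \itemref{lem:sadm_fl_loc_SA3:ft}, and for \itemref{lem:sadm_fl_loc_SA3:fp} presents $B_3$ as a quotient of a polynomial ring and uses flatness to keep the kernel exact after base change before applying Ferrand's finite-type-module case.
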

\begin{proof}
  By \Cref{lem:geom_po_ferr_new}\itemref{lem:geom_po_ferr_new:item:flb_gpo},
  this is all smooth local on $\stX_3$ and $\stU_3$; thus, we immediately 
  reduce to the case where everything in sight is affine. Fix a
  diagram of rings $[A_2 \rightarrow A_0 \xleftarrow{p} A_1]$ where $p
  \colon A_1\to A_0$ is surjective. For $j=0$, $1$, $2$ fix 
  $A_j$-algebras $B_j$ and $A_0$-isomorphisms
  $B_2\otimes_{A_2} A_0 \cong B_0$ and $B_1\otimes_{A_1} A_0 \cong
  B_0$. Set $A_3 = A_2\times_{A_0} A_1$ and $B_3 = B_2\times_{B_0} 
  B_1$; then we first have to prove that
  the natural maps $B_3\otimes_{A_3} A_j \to B_j$ are
  isomorphisms, and that these isomorphisms are compatible with the
  given isomorphisms. This is an immediate consequence of \cite[Thm.\
  2.2(i)]{ferrand_conducteur}, since these are just questions about modules.  
  
  Case \itemref{lem:sadm_fl_loc_SA3:flat} similarly follows from \cite[Thm.\
  2.2(iv)]{ferrand_conducteur}. Case \itemref{lem:sadm_fl_loc_SA3:surj} follows from the 
  observation that $|\stX_1| \amalg |\stX_2| \to |\stX_3|$ and 
  $|\stU_1| \amalg |\stU_2| \to |\stU_3|$ are surjective \cite[Sch.\ 4.3 \& Thm.\ 5.1]{ferrand_conducteur}. Case \itemref{lem:sadm_fl_loc_SA3:smooth} follows from \itemref{lem:sadm_fl_loc_SA3:fp}, the surjectivity of $|\stX_1| \amalg |\stX_2| \to |\stX_3|$ already remarked, and the observation that smoothness is a fibral criterion for morphisms that are flat and locally of finite presentation.  
  
  For \itemref{lem:sadm_fl_loc_SA3:ft}, we argue as follows: by \cite[Thm.\ 2.2(ii)]{ferrand_conducteur}, an $A_3$-module $W_3$ is zero if
and only if the modules $W_3\otimes_{A_3} A_1$ and $W_3\otimes_{A_3} A_2$ are
zero. Now write $B_3$ as the union of its finite type $A_3$-sub-algebras $B_{3,\lambda}$. As
filtered direct limits commute with tensor products, it follows that for
sufficiently large $\lambda$, the homomorphisms $B_{3,\lambda}\otimes_{A_3} A_1  \to B_1$ and $B_{3,\lambda}\otimes_{A_3} A_2  \to B_2$ are
surjective. Looking at the cokernel, it follows that $B_{3,\lambda} \to B_3$ is
surjective.

For  \itemref{lem:sadm_fl_loc_SA3:fp}: if $B_j$ is a flat $A_j$-algebra of finite presentation for $j=1$, $2$, then we know by \itemref{lem:sadm_fl_loc_SA3:ft} that $B_3$ is of finite type. Hence, we can choose a surjection
$P_3=A_3[x_1,\ldots,x_n] \twoheadrightarrow B_3$. Let $J_3$ be the kernel. Since $B_3$ is $A_3$-flat, the sequence
\[
0 \to J_3\to P_3\to B_3 \to 0
\]
remains exact after tensoring by any $A_3$-algebra. In particular, $J_j = J_3 \otimes_{A_3} A_j$ is a $P_j = P_3 \otimes_{A_3} A_j$-module of finite type for $j=1$, $2$. It now follows from Ferrand's case of finite type modules (over the cocartesian
square defined by the $P_j$) that $J_3$ is a $P_3$-module of finite type;  hence $B_3$ is an $A_3$-algebra of finite presentation.
\end{proof}

We now come to an important lemma, where we make use of \Cref{T:affine-etale-nbhds} in a critical way. Note that the proof is almost identical to \cite[Lem.\ A.8]{hall_coherent-versality}.
\begin{lemma}\label{lem:sch_pushouts1}
  Fix a $2$-commutative square of algebraic stacks
  \[
  \xymatrix{\stX_0 \ar@{^(->}[r]^{{i}} \ar[d]_f
    \drtwocell<\omit>{\alpha} & \stX_1 \ar[d]^{f'}\\ \stX_2
    \ar@{^(->}[r]_{{i}'} & \stX_3.}
  \]
  If the square is a geometric pushout and $i$ is a 
  closed immersion, then the square is $2$-cartesian and
  $2$-cocartesian in the $2$-category of algebraic stacks with quasi-separated
  diagonals.
\end{lemma}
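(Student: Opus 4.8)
The plan is to treat the two assertions separately. That the geometric pushout square is $2$-cartesian is exactly \Cref{lem:geom_po_ferr_new}\itemref{lem:geom_po_ferr_new:item:2cart}. For the $2$-cocartesian assertion one must show that for every algebraic stack $\mathscr{T}$ with quasi-separated diagonal the natural functor
\[
\Phi\colon \Hom(\stX_3,\mathscr{T})\longrightarrow \Hom(\stX_1,\mathscr{T})\times_{\Hom(\stX_0,\mathscr{T})}\Hom(\stX_2,\mathscr{T})
\]
is an equivalence of groupoids. Since $f$ and $f'$ are affine and, by \Cref{lem:geom_po_ferr_new}\itemref{lem:geom_po_ferr_new:item:flb_gpo}, the square restricts to a geometric pushout over each open substack of $\stX_3$, one reduces first to $\stX_1$, $\stX_2$ (hence $\stX_3$) quasi-compact, and then, testing $\Phi$ against a fixed cocone or a fixed pair of morphisms, to $\mathscr{T}$ quasi-compact as well.

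The engine is the affine case. For $\mathscr{T}=\Spec R$ one has $\Hom(\stX_i,\mathscr{T})=\Hom_{\mathrm{rings}}\bigl(R,\Gamma(\stX_i,\sO_{\stX_i})\bigr)$, and applying the left-exact functor $\Gamma(\stX_3,-)$ to the defining isomorphism $\sO_{\stX_3}\iso i'_*\sO_{\stX_2}\times_{(i'f)_*\sO_{\stX_0}}f'_*\sO_{\stX_1}$ identifies $\Gamma(\stX_3,\sO_{\stX_3})$ with $\Gamma(\stX_2,\sO_{\stX_2})\times_{\Gamma(\stX_0,\sO_{\stX_0})}\Gamma(\stX_1,\sO_{\stX_1})$, so $\Phi$ is a bijection. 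Carried out relatively---using in addition the adjunctions along $f'$ and $i'$---the same computation shows that for any affine morphism $\stY'\to\stX_3$ the sections of $\stY'$ over $\stX_3$ are the same as compatible pairs of sections over $\stX_1$ and $\stX_2$ agreeing over $\stX_0$; gluing along affine opens of $\mathscr{T}$ (and using that, by \cite{ferrand_conducteur}, $|\stX_3|$ is covered by the images of $\stX_1,\stX_2$) then extends this to scheme-valued targets.

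The decisive step is a general target $\mathscr{T}$, and it is here that \Cref{T:affine-etale-nbhds} is used in place of the topological invariance of the \'etale site, which suffices in \cite[Lem.~A.8]{hall_coherent-versality} only because the closed immersion there is a nilpotent thickening. Given a cocone $(g_1,g_2,\theta)$ with $\theta\colon g_1\circ i\Rightarrow g_2\circ f$, choose a smooth surjection $P\to\mathscr{T}$ from an affine scheme, then a smooth surjection $Y_2\to\stX_2$ from an affine scheme factoring through $P\times_{\mathscr{T},g_2}\stX_2$, so $Y_2$ carries a compatible morphism to $\mathscr{T}$. Applying \Cref{T:affine-etale-nbhds} to the closed immersion $\stX_2\hookrightarrow\stX_3$ and the smooth chart $Y_2\to\stX_2$ produces an affine scheme $Y_3$ with a smooth morphism $Y_3\to\stX_3$ restricting to $Y_2$ over $\stX_2$. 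Adjoining a chart of the open substack $\stX_1\setminus\stX_0$---identified with $\stX_3\setminus\stX_2$ via $f'$, and over which $g_1$ furnishes a morphism to $\mathscr{T}$---gives a smooth surjection $\mathcal{U}\to\stX_3$ from a scheme. By \Cref{lem:geom_po_ferr_new}\itemref{lem:geom_po_ferr_new:item:flb_gpo}, $\mathcal{U}$ and all its iterated fibre products over $\stX_3$ are geometric pushouts of schemes, and over every overlap meeting $\stX_3\setminus\stX_2$ the pushout is an isomorphism; the scheme case of the statement then assembles the morphisms coming from $Y_2\to P\to\mathscr{T}$ and from $g_1$ into descent data along $\mathcal{U}\to\stX_3$, yielding $g_3\colon\stX_3\to\mathscr{T}$ with $2$-isomorphisms $g_3\circ f'\simeq g_1$, $g_3\circ i'\simeq g_2$ compatible with $\alpha$ and $\theta$. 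Running the same argument with the algebraic space $\underline{\Isom}_{\stX_3}(a,b)$ in place of $\mathscr{T}$, for $a,b\colon\stX_3\to\mathscr{T}$, gives the full faithfulness of $\Phi$.

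I expect the main obstacle to be exactly this last step. In the non-nilpotent case a smooth chart of $\stX_2$ no longer extends automatically over $\stX_3$---which is what forces the appeal to \Cref{T:affine-etale-nbhds}---and the extended chart $Y_3$ need not surject onto $\stX_3\setminus\stX_2$, so one must supply the auxiliary chart there and carry out a somewhat delicate gluing. A further point is avoiding circularity when $\mathscr{T}$ is a stack rather than a scheme: this is handled by establishing the scheme-valued case first and then reducing the general case to it by descending along $P\to\mathscr{T}$ and $\mathcal{U}\to\stX_3$, while tracking all the comparison $2$-cells.
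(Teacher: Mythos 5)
Your reduction of the $2$-cartesian statement to \Cref{lem:geom_po_ferr_new}\itemref{lem:geom_po_ferr_new:item:2cart} and your identification of \Cref{T:affine-etale-nbhds} as the essential new input (replacing topological invariance of the \'etale site, which suffices in \cite[Lem.~A.8]{hall_coherent-versality} only because the closed immersion there is nilpotent) are both on target. However, you apply that theorem to the wrong closed immersion, and as a consequence the core difficulty is never resolved. The crux of the cocartesian property is the case where all of $\stX_0,\stX_1,\stX_2,\stX_3$ are \emph{affine} and the target is a genuine \emph{stack}: given $\psi_1\colon\Spec A_1\to\mathscr{T}$ and $\psi_2\colon\Spec A_2\to\mathscr{T}$ agreeing on $\Spec A_0$, produce $\Spec(A_2\times_{A_0}A_1)\to\mathscr{T}$. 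Your ``engine'' treats only affine (and, after gluing, scheme) targets, and your descent step for general $\mathscr{T}$ silently reintroduces exactly the missing case: after extending $Y_2\to\stX_2$ to $Y_3\to\stX_3$, the chart $Y_3$ is the geometric pushout of the affine schemes $Y_j=Y_3\times_{\stX_3}\stX_j$ (affine because $f'$ and $i'$ are affine), and to feed descent you must first produce $Y_3\to\mathscr{T}$ from $Y_2\to P\to\mathscr{T}$ and $Y_1\to\stX_1\xrightarrow{g_1}\mathscr{T}$. But $Y_1\to\mathscr{T}$ has no reason to factor through the chart $P$, and $Y_2\subset Y_3$ is closed, not open, so one cannot glue over an open cover. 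This is not the ``scheme case''; it is the affine-source/stack-target case again, unsolved.

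The paper's proof resolves precisely this point, in a different order. One first smooth-localizes on $\stX_3$ (legitimate because uniqueness is proved simultaneously) to make all $\stX_j$ affine. Then one pulls back a smooth affine chart $\Spec B\to\stW$ along the $\psi_j$ to get surjections $U_j\to\stX_j$ from quasi-separated algebraic spaces, chooses an \'etale affine chart $\Spec C_2\to U_2$, pulls it back to $\Spec C_0\to U_0$, and applies \Cref{T:affine-etale-nbhds} to the closed immersion $U_0\hookrightarrow U_1$ to extend it to an \'etale affine chart $\Spec C_1\to U_1$. Then $\Spec(C_2\times_{C_0}C_1)\to\Spec A_3$ is smooth and surjective by \Cref{lem:sadm_fl_loc_SA3}, and over this cover all three maps factor through the affine chart $\Spec B$, so the affine-target case applies. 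Note the further advantage: $U_0\hookrightarrow U_1$ is automatically a closed immersion of quasi-separated algebraic spaces because $\stW$ has quasi-separated diagonal, so \Cref{T:affine-etale-nbhds} applies; your application of it to $\stX_2\hookrightarrow\stX_3$ would require $\stX_3$ to be quasi-separated with affine stabilizers, which is not among the hypotheses of the lemma.
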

\begin{proof}
  That the square is $2$-cartesian is 
  \Cref{lem:geom_po_ferr_new}\itemref{lem:geom_po_ferr_new:item:2cart}. It
  remains to show that we can uniquely complete all $2$-commutative
  diagrams of algebraic stacks
  \vspace{-8mm}
  \[
  \xymatrix@ur{\stX_0 \ar@{^(->}[r]^{{i}} \ar[d]_f
    \drtwocell<\omit>{\alpha} & \stX_1 
    \ar[d]^{f'} \ar@/^1pc/[ddr]^{\psi_1} & \\  \stX_2
    \ar@{^(->}[r]_{{i}'} \ar@/_1pc/[drr]_{\psi_2} & \stX_3
    \drtwocell<\omit>{\beta} & \\ & & \stW}
  \vspace{-8mm}
  \]
  with a map $\stX_3\to \stW$ and compatible $2$-isomorphisms.
  By smooth descent, this is smooth-local on $\stX_3$, so we may reduce
  to the situation where the $\stX_j=\Spec A_j$ are all affine
  schemes. Since $\stX_3$ is a geometric pushout of the diagram
  $[\stX_2\xleftarrow{f} \stX_0 \xrightarrow{i} \stX_1]$, it follows that
  $A_3\cong A_2\times_{A_0} A_1$. 

  Let $q \colon \Spec B \to \stW$ be a smooth morphism such that the
  pullback $v_j \colon U_j \to \stX_j$ of $q$ along $\psi_j$ is
  surjective for $j\in \{0,1,2\}$, which exists because the $\stX_j$ are
  all quasi-compact. There are compatibly induced morphisms of
  quasi-separated algebraic spaces
  $\psi_{j,B} \colon U_j \to \Spec B$ for $j=1$ and
  $2$ and $f_B \colon U_0 \to U_2$ and $i_B \colon U_0 \hookrightarrow
  U_1$.

  Let $c_2 \colon \Spec C_2 \to U_2$ be an \'etale morphism such that
  $v_2\circ c_2$ is smooth and surjective. The morphism $c_2$ pulls
  back along $f_B$ to give an \'etale morphism $c_0\colon \Spec C_0 \to
  U_0$ such that $v_0\circ c_0$ is smooth and surjective. Let
  $\tilde{f} \colon \Spec C_0 \to \Spec C_2$ and $\tilde{\psi}_2
  \colon \Spec C_2 \to \Spec B$ be the resulting morphisms. 

  Since $c_0$ is \'etale and $i$ is a closed
  immersion, it follows that there is an \'etale morphism $c_1\colon \Spec C_1 \to
  U_1$ whose pullback along $i_B$ is isomorphic to $c_0$ (\Cref{T:affine-etale-nbhds}). It can easily be arranged that $v_1 \circ c_1$ is smooth and surjective. Let $C_3 = C_2\times_{C_0} C_1$. Then there
  is a uniquely induced ring homomorphism $A_3 \to C_3$. By
  \Cref{lem:sadm_fl_loc_SA3}, the morphism $c_3\colon \Spec C_3 \to
  \Spec A_3$ is smooth and surjective. Hence, we may replace $\Spec A_j$ by $\Spec C_j$ and
  further assume that the $\psi_j$ for $j=0$, $1$, and $2$ factor
  through some smooth morphism $q\colon \Spec B \to \stW$. In particular,
  there is an induced morphism $\psi_3\colon \Spec A_3 \to \Spec B \to
  \stW$. 
  
  It remains to prove that the morphism $\psi_3$ is unique up to a
  unique choice of $2$-morphism. Let $\psi_3$ and $\psi_3'\colon \Spec
  A_3 \to \stW$ be two compatible morphisms. That these morphisms
  are isomorphic can be checked smooth-locally on $\Spec A_3$. But
  smooth-locally, the morphisms $\psi_3$ and $\psi'_3$ both factor
  through some $\Spec B \to \stW$ and the morphisms $\Spec A_j \to \Spec
  A_3 \to \Spec B$ coincide for $j=0$, $1$, and $2$, thus $\psi_3$ and
  $\psi'_3$ are isomorphic. To show that the isomorphism between
  $\psi_3$ and $\psi'_3$ is unique, we just repeat the argument, and
  the result follows. 
\end{proof}
We finally come to the proof of  
\Cref{T:geometric-pushout}.
\begin{proof}[Proof of \Cref{T:geometric-pushout}]
  By \Cref{lem:sch_pushouts1}, it suffices to prove the existence
  of geometric pushouts. Let $\Coll_0$ denote the category of affine
  schemes. For ${d}=1$, $2$, $3$, let $\Coll_{d}$ denote the full
  $2$-subcategory of the $2$-category of algebraic stacks with affine
  $d$th diagonal. Note that $\Coll_3$ is the full $2$-category of
  algebraic stacks. We will prove by induction on $d\geq 0$ that if
  $[\stX_2 \xleftarrow{f} \stX_0 \xrightarrow{i} \stX_1]$ belongs to $\Coll_d$
  and $\stX_1$ is quasi-separated,
  then it admits a geometric pushout. For the base case, where $d=0$ and
  $\stX_j = \Spec A_j$ is affine, take
  $\stX_3 = \Spec ( A_2\times_{A_0} A_1 )$ and the result is clear.

  Now let $d>0$ and assume that $[\stX_2 \xleftarrow{f} \stX_0
  \xrightarrow{i} \stX_1]$ belongs to $\Coll_d$. Fix a smooth surjection
  $\amalg_{l\in \Lambda} X_2^l \to \stX_2$, where $X_2^l$ is an affine
  scheme $\forall l\in \Lambda$. Set $X_0^l = X_2^l \times_{\stX_2} \stX_0$.
  As $f$ is affine, the scheme $X_0^l$ is also affine. 
  By \Cref{T:affine-etale-nbhds}, the resulting smooth surjection $X_0^l \to \stX_0$
  lifts to a smooth surjection $X^l_1 \to \stX_1$, with $X^l_1$ affine, and
  $X_0^l\cong X^l_1\times_{\stX_1} \stX_0$. For $j = 0$, $1$, and $2$ and
  $u$, $v$, $w \in \Lambda$ set $X_j^{uv}=X_j^u\times_{\stX_j} X_j^v$ and
  $X_j^{uvw} = X_j^u \times_{\stX_j} X_j^v \times_{\stX_j} X_j^w$. Note that
  for $j=0$, $1$, and $2$ and all 
  $u$, $v$, $w\in \Lambda$ we have $X_j^{uv}$, $X_j^{uvw} \in
  \Coll_{d-1}$. By the inductive hypothesis, for $I=u$, $uv$ or $uvw$,
  a geometric pushout $X_3^I$ of the diagram $[X_2^I \leftarrow X_0^I
  \rightarrow X_1^I]$ exists. By \Cref{lem:sch_pushouts1}, there
  are uniquely induced morphisms $X^{uv}_j \to X^{u}_j$. For $j\neq
  3$, these morphisms are clearly smooth, and by 
  \Cref{lem:sadm_fl_loc_SA3} the morphisms
  $X^{uv}_3 \to X^u_3$ are smooth. It easily verified that the
  universal properties give rise to a smooth groupoid $[\amalg_{u,v\in
    \Lambda} X^{uv}_3 \rightrightarrows \amalg_{w\in \Lambda}
  X_3^w]$. The quotient $\stX_3$ of this groupoid in the category of
  stacks is algebraic. By 
  \Cref{lem:geom_po_ferr_new}\itemref{lem:geom_po_ferr_new:item:fl_loc_gpo}
  it is also a geometric pushout of the diagram $[\stX_2 \leftarrow \stX_0
  \rightarrow \stX_1]$ and the result follows.

  That the pushout inherits the properties ``quasi-compact'' and
  ``quasi-separated'' follows from $\stX_1\amalg \stX_2\to \stX_3$ being affine
  and surjective.  The properties ``Deligne--Mumford'' and ``algebraic
  space'', are inherited since $(\stX_1\smallsetminus\stX_0) \amalg \stX_2\to
  \stX_3$ is a surjective monomorphism. For ``affine'', this was the
  base case of the induction.
\end{proof}
\end{section}

%%%%%%%%%%%%%%%%%%%%%%%%%%%%%%%%%%%%%%%%%%%%%%%%%%%%%%%%%%%%%%%%%%%%%%%%%

\begin{section}{Local structure of algebraic stacks}\label{S:local-structure}
In this section we prove the main local structure results for stacks (\Cref{T:local-structure:excellent,T:local-structure:non-closed}) as well as non-noetherian generalizations (\Cref{T:local-structure:non-noeth,T:local-structure:non-noeth:pro-immersion}).  

\subsection{Proof of \texorpdfstring{\Cref{T:local-structure:excellent}}{Theorem~\ref{T:local-structure:excellent}}}

When $f_0 \co \stW_0 \to \stX_0$ is smooth or \'etale, the theorem can be established along similar lines to \cite[Proof of Thm.~12.1]{alper-hall-rydh_etale-local-stacks}.

\begin{proof}[Proof of \Cref{T:local-structure:excellent}\itemref{TI:local-structure:excellent1}---smooth/\'etale case] \qquad

\smallskip
\noindent
\textbf{Step 1: An effective formally versal solution.}
Since $\stW_0$ is quasi-compact, we may assume that $\stX$ is quasi-compact 
after replacing $\stX$ with a quasi-compact open substack containing the image of $\stW_0$.
Since $\stW_0$ is linearly fundamental, we can apply \cite[Thm.~1.11]{alper-hall-rydh_etale-local-stacks} to obtain a cartesian square
\[\xymatrix{
  \stW_0 \ar@{^(->}[r] \ar[d]^{f_0}  & \widehat{\stW} \ar[d]^{\widehat{f}} \\
  \stX_0 \ar@{^(->}[r]  & \stX   
}\]
where $\widehat{f} \co \widehat{\stW} \to \stX$ is a flat morphism and $\widehat{\stW}$ is a linearly fundamental stack coherently complete along $\stW_0$.
Since $\stW_n\to \stX_n$ is smooth, $\widehat{f} \co \widehat{\stW}\to
\stX$ is formally versal at $\stW_0$ (\Cref{L:formally_versal}).

\smallskip
\noindent
\textbf{Step 2: Algebraization.}  We now apply algebraization for linearly 
fundamental pairs (\Cref{T:Artin-algebraization}) to the pair $(\widehat{\stW}, \stW_0)$ and morphism $\widehat{f} \co \widehat{\stW}\to
\stX$ to obtain a fundamental
pair $(\stW, \stW_0)$ with $\stW$ of finite type over $S$ and a morphism $f \co \stW \to \stX$ smooth (resp.\ \'etale) along $\stW_0$ such that $\widehat{\stW}$ is isomorphic over $\stX$ to the coherent completion of $\stW$ along $\stW_0$.   After replacing $\stW$ with an open neighborhood, we may arrange that $f$ is smooth (resp.\ \'etale) and $\stW$ is fundamental.  Indeed, if $\stU \subset \stW$ is an open neighborhood of $\stW_0$ such that $f|_{\stU}$ is smooth (resp.\ \'etale) and if $\pi\colon \stW\to W$ denotes the adequate moduli space, then we replace $\stW$ with the inverse image of any affine open subscheme of $W\smallsetminus \pi(\stW\smallsetminus \stU)\bigr)$ containing $\pi(\stW_0)$. 
\end{proof}

The case when $f_0 \co \stW_0 \to \stX_0$ is syntomic is handled by reducing to the smooth case.

\begin{proof}[Proof of \Cref{T:local-structure:excellent}\itemref{TI:local-structure:excellent2}---syntomic case] \qquad

  \smallskip
  \noindent
  \textbf{Step 1: We may assume that $f_0 \co \stW_0 \to \stX_0$ is affine.}  
  We may assume that $\stX$ is quasi-compact.  Since $\stW_0$ is fundamental, there is an affine morphism $\stW_0 \to B \GL_n$ for some $n$.  Since $\stX_0$ has affine diagonal (as it has the resolution property), the induced morphism $\stW_0 \to \stX_0 \times B \GL_n$ is affine.  Since $B \GL_n$ is smooth with smooth diagonal, we may replace $(\stX, \stX_0)$ with $(\stX \times B \GL_n, \stX_0 \times B \GL_n)$. 

  \smallskip
  \noindent
  \textbf{Step 2: There is a factorization $f_0 \co \stW_0 \inj \stY_0 \to \stX_0$ where $\stW_0 \inj \stY_0$ is a regular closed immersion, $\stY_0 \to \stX_0$ is smooth and affine, and $\stY_0$ is linearly fundamental.}  
  Since $\stX_0$ has the resolution property and $(f_0)_* \oh_{\stW_0}$ is a finite type $\oh_{\stX_0}$-algebra, there exists a vector bundle $\cE_0$ on $\stX_0$ and a surjection $\Sym(\sE_0)\surj (f_0)_*\oh_{\stW_0}$.  Setting $\stY_0 = \VV(\sE_0)=\Spec(\Sym\sE_0)$ yields a factorization such that $\stW_0 \inj \stY_0$ is regular closed immersion and $\stY_0 \to \stX_0$ is smooth and affine.  To arrange that $\stY_0$ is linearly fundamental, we apply the \'etale case of the local structure theorem (\Cref{T:local-structure:excellent}\itemref{TI:local-structure:excellent1}) to the closed immersion $\stW_0\inj \stY_0$ to extend the isomorphism $\stW_0 \iso \stW_0$ to an \'etale morphism $\stY'_0 \to \stY_0$ with $\stY'_0$ fundamental. Since $\stW_0$ satisfies \ref{Cond:PC} or \ref{Cond:N}, or $\stY_0$ satisfies
  \ref{Cond:FC}, there is an open neighborhood of $\stW_0\inj \stY'_0$ that is
  linearly fundamental \cite[Prop.~16.14]{alper-hall-rydh_etale-local-stacks}.

  \smallskip
  \noindent
  \textbf{Step 3: Apply the smooth version of the local structure theorem.}  Since $\stY_0$ is linearly fundamental, we may apply the smooth case of the local structure theorem (\Cref{T:local-structure:excellent}\itemref{TI:local-structure:excellent1})
  to the closed immersion $\stX_0 \inj \stX$ and smooth morphism $\stY_0 \to \stX_0$ to obtain a commutative diagram
  \[
    \xymatrix{
      \stW_0 \ar@{^(->}[r] \ar[rd]_{f_0}  & \stY_0 \ar[d] \ar@{^(->}[r]   & \stY \ar[d] \\
                                          & \stX_0 \ar@{^(->}[r]          & \stX \ar@{}[ul]|\square
    }
  \] 
  with a cartesian square such that $\stY \to \stX$ is smooth and $\stY$ is fundamental.

  \textbf{Step 4: Lift the closed substack $\stW_0 \inj \stY_0$ to a closed substack $\stW \inj \stY$ syntomic over $\stX$.} 
  Let $\sI_0$ be the ideal sheaf defining $\stW_0 \inj \stY_0$ and consider
  the conormal bundle $\sN_0:=\sI_0/\sI_0^2$. After replacing $\stY$ with an \'etale
  fundamental neighborhood of $\stW_0\inj \stY$, we may extend the conormal
  bundle to a vector bundle $\sN$ on
  $\stY$; this follows from applying 
  \cite[Prop.~16.12]{alper-hall-rydh_etale-local-stacks} to the
  fundamental pair $(\stY, \stW_0)$ and
  the morphism $\stW_0 \to B \GL_n$ induced from $\sN_0$.

  We claim that after replacing $\stY$ with a fundamental \'etale neighborhood of $\stW_0$ the canonical homomorphism $\sN\surj 
  \sN_0 \inj \sO_{\stY_0}/\sI_0^2$ extends to a diagram
  \[
    \xymatrix{
      \cN \ar@{-->}[r] \ar@{->>}[d]        & \oh_{\stY} \ar@{->>}[d]   \\
      \cN_0 \ar@{^(->}[r]                        & \oh_{\stY_0}/\sI_0^2.
    }
  \]
  If $\stY$ is linearly fundamental, this is immediate as the functor $\Hom_{\oh_{\stY}}(\cN,-)
  = \Gamma(\stY, \cN^{\vee} \tensor -)$ is exact. 
  In general, let $\stW_1 \inj \stY_0$ be the closed substack defined by $\sI_0^2$. Then we have a morphism $\stW_1\to \VV(\cN)$ over $\stY$ which by \cite[Prop.~16.12]{alper-hall-rydh_etale-local-stacks} extends to a section
$\stY\to \VV(\cN)$ after replacing $\stY$ with a fundamental \'etale neighborhood of $\stW_0$. This gives the requested map $\cN\to \oh_{\stY}$.
  
  Let $\stW$ be the closed substack defined by the image of $\sN\to
  \oh_{\stY}$. By construction $\stW$ contains the closed substack $\stW_0$.
  We claim that $\stW_0\inj \stW\times_\stX \stX_0$ is an isomorphism and
  $\stW \to \stX$ is syntomic in an
  open neighborhood of $\stW_0$.  This establishes the theorem as we may shrink
  further to arrange that $\stW \to \stX$ is syntomic in a fundamental
  open neighborhood of $\stW_0$.
  These claims can be verified smooth-locally on $\stX$ and
  $\stY$ so we may assume that $\stX$ and $\stY$ are affine schemes and $\sN$
  is a trivial vector bundle. By construction $\sN\oh_{\stY_0}+\sJ_0^2=\sJ_0$
  so it follows that $\stW_0\inj \stW\times_\stY \stY_0$ is an isomorphism
  in an open neighborhood of $\stW_0$ by Nakayama's lemma.

  Let $f_1,\dots,f_n\in \sO_\stY$ be the image
  of a basis of $\sN$. We claim that $f_1,\dots,f_n$ is a regular sequence
  in a neighborhood of $\stW_0$ and that $\stW\to \stX$ is flat in a
  neighborhood of $\stW_0$. By~\cite[Thm.~11.3.8 (c)$\implies$(b')]{egaIV}, it
  is enough to prove that the images of $f_1,\dots,f_n$ in $\sO_{\stY_x,w}$
  is a regular sequence for every $w\in |\stW_0|$ with image $x\in |\stX_0|$, which
   follows by construction.
\end{proof}

\subsection{Non-noetherian local structure theorem}

The following provides a non-noetherian generalization of 
\Cref{T:local-structure:excellent}, which we
establish by reducing to the noetherian case.

\begin{theorem}[Local structure of stacks]\label{T:local-structure:non-noeth}
  Let $\stX$ be a quasi-separated algebraic stack with affine stabilizers, 
  $\stX_0 \inj \stX$ be a closed substack and 
  $f_0\colon \stW_0\to \stX_0$ be a morphism with $\stW_0$ linearly fundamental.
  Assume one of the following conditions:
  \begin{enumerate}
    \item \label{TI:local-structure:non-noeth1}
      $\stX$ is locally of finite type over an excellent algebraic space; or 
    \item \label{TI:local-structure:non-noeth2} 
      $\stW_0$ satisfies \ref{Cond:PC} or \ref{Cond:N}; or
    \item \label{TI:local-structure:non-noeth3}
      $\stX_0$ satisfies \ref{Cond:FC}.
  \end{enumerate}
  Then
  \begin{enumerate}[label=(\alph*)]
    \item \label{TI:local-structure:non-noeth:smooth-et} If $f_0$ is smooth (resp.\ \'etale), then there exists a smooth
      (resp.\ \'etale) morphism $f\colon \stW\to \stX$ such that $\stW$ is
      fundamental and $f|_{\stX_0}\simeq f_0$.
    \item \label{TI:local-structure:non-noeth:syntomic}  Assume that $\stW_0$ satisfies \ref{Cond:PC} or \ref{Cond:N} or $\stX_0$ satisfies
      \ref{Cond:FC}. If $f_0$ is syntomic and $\stX_0$ has the resolution property, then
      there exists a syntomic morphism $f\colon \stW\to \stX$ such that $\stW$ is
      fundamental and $f|_{\stX_0}\cong f_0$.
  \end{enumerate}
\end{theorem}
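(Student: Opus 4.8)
**The plan is to reduce the non-noetherian statement (\Cref{T:local-structure:non-noeth}) to the excellent-noetherian case (\Cref{T:local-structure:excellent}) by standard approximation techniques.** The key point is that \Cref{T:local-structure:excellent} already covers every one of the three hypotheses once $\stX$ is replaced by an excellent base: condition \itemref{TI:local-structure:non-noeth1} is literally the hypothesis of \Cref{T:local-structure:excellent}, while conditions \itemref{TI:local-structure:non-noeth2} and \itemref{TI:local-structure:non-noeth3} are the two alternative hypotheses appearing in parts \itemref{TI:local-structure:excellent1}--\itemref{TI:local-structure:excellent2} of that theorem. So the content of the proof is entirely in the limit argument: writing an arbitrary quasi-separated algebraic stack with affine stabilizers as a limit of finite-type stacks over $\Spec\ZZ$ (which is excellent), and descending the data $(\stX_0 \inj \stX, f_0\colon \stW_0 \to \stX_0)$ together with the properties ``linearly fundamental'', ``smooth/\'etale/syntomic'', ``resolution property'', and the conditions \ref{Cond:FC}, \ref{Cond:PC}, \ref{Cond:N} to a finite level.

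First I would reduce to $\stX$ quasi-compact by replacing it with a quasi-compact open substack containing the (quasi-compact) image of $\stW_0$; this does not affect any hypothesis. Next, using the approximation machinery for quasi-compact quasi-separated algebraic stacks with affine stabilizers \cite[Thm.~D]{rydh_noetherian-approx}, I would write $\stX = \varprojlim_\lambda \stX^\lambda$ as a cofiltered limit of algebraic stacks of finite presentation over $\Spec\ZZ$ with affine transition maps, and similarly write the closed immersion $\stX_0 \inj \stX$ as arising by base change from a finitely presented closed immersion $\stX_0^\lambda \inj \stX^\lambda$ for $\lambda$ large. Since $\stW_0$ is linearly fundamental it is in particular of finite presentation over $\Spec\ZZ$, and its good moduli space presentation and the morphism $f_0$ descend: for $\lambda$ large enough there is a linearly fundamental stack $\stW_0^\lambda$ with a morphism $f_0^\lambda\colon \stW_0^\lambda \to \stX_0^\lambda$ pulling back to $(\stW_0, f_0)$, and the property of $f_0$ being smooth/\'etale/syntomic and of $\stX_0$ having the resolution property descend to level $\lambda$ by standard limit methods \cite[App.~B]{rydh_noetherian-approx}. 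The conditions \ref{Cond:FC}, \ref{Cond:PC}, \ref{Cond:N} are conditions on the finitely many residue characteristics and on stabilizer groups at closed points, all of which are detected at a finite level.

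Having descended all the data, I would apply \Cref{T:local-structure:excellent} to $\stX^\lambda$ (which is excellent, being of finite type over $\Spec\ZZ$), the closed immersion $\stX_0^\lambda \inj \stX^\lambda$, and $f_0^\lambda\colon \stW_0^\lambda \to \stX_0^\lambda$, to obtain a fundamental stack $\stW^\lambda$ with a smooth/\'etale/syntomic morphism $f^\lambda\colon \stW^\lambda \to \stX^\lambda$ restricting to $f_0^\lambda$ over $\stX_0^\lambda$. Pulling back along $\stX \to \stX^\lambda$ gives $\stW := \stW^\lambda \times_{\stX^\lambda} \stX$ with a morphism $f\colon \stW \to \stX$ of the required type; $\stW$ remains fundamental because ``fundamental'' is stable under base change (an affine morphism to $B\GL_{n,\ZZ}$ pulls back to one), and $f|_{\stX_0} \simeq f_0$ by construction. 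The main obstacle — really the only nontrivial point — is verifying that all the relevant properties descend through the limit at a uniform finite level, in particular that being \emph{linearly fundamental} (cohomologically affine plus an affine morphism to $B\GL_n$) is constructible in this sense and that the conditions \ref{Cond:PC}, \ref{Cond:N} on stabilizers at closed points are compatible with the limit; these are exactly the kinds of statements assembled in \cite{alper-hall-rydh_etale-local-stacks}, so I would invoke the relevant limit results from there rather than reproving them.
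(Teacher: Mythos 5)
Your overall strategy---reduce to the excellent case \Cref{T:local-structure:excellent} by writing $\stX$ as a limit of finitely presented stacks over $\Spec\ZZ$ and descending all the data---is exactly the paper's, but two of your descent steps fail as written. First, you assert that the closed immersion $\stX_0\inj\stX$ ``arises by base change from a finitely presented closed immersion $\stX_0^\lambda\inj\stX^\lambda$''. A closed immersion into a non-noetherian stack need not be of finite presentation (its ideal sheaf need not be finitely generated), and only finitely presented morphisms descend through the limit. The paper therefore inserts a preliminary step: write $\stX_0=\bigcap_\lambda \stX_{0,\lambda}$ as a cofiltered intersection of \emph{finitely presented} closed substacks of $\stX$ \cite{rydh_noetherian-general}, extend $f_0$ to some $f_{0,\lambda}\colon\stW_{0,\lambda}\to\stX_{0,\lambda}$, check that smoothness/syntomicity, the resolution property and linear fundamentality persist for large $\lambda$, and then \emph{replace} $\stX_0$ by the larger closed substack $\stX_{0,\lambda}$ (the conclusion for $\stX_{0,\lambda}$ restricts to the one for the original $\stX_0$). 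Only after this is $\stX_0\inj\stX$ finitely presented and hence descendable along $\stX=\varprojlim_\lambda\stX_\lambda$. Relatedly, your claim that linear fundamentality of $\stW_0$ makes it of finite presentation over $\Spec\ZZ$ is false ($[\Spec A/\GL_n]$ for arbitrary $A$ is linearly fundamental whenever it is cohomologically affine); what saves the descent of $f_0$ is that $f_0$ itself is of finite presentation, being smooth, \'etale or syntomic with quasi-compact and quasi-separated source.

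Second, under hypothesis \itemref{TI:local-structure:non-noeth3} the condition \ref{Cond:FC} on $\stX_0$ does \emph{not} pass to the approximations: the finitely presented closed substacks $\stX_{0,\lambda}$ (and the finite-type models $\stX_\lambda$) can have infinitely many residue characteristics even when $\stX_0$ has only finitely many, so the criterion of \cite[Thm.~15.3]{alper-hall-rydh_etale-local-stacks} for $\stW_{0,\lambda}$ to be linearly fundamental is not directly available, contrary to your assertion that \ref{Cond:FC} ``is detected at a finite level''. The paper handles this by first base changing everything along $S\to\Spec\ZZ$, where $S$ is $\Spec\ZZ$ semi-localized at the characteristics of $\stX_0$---after which $\stX$ itself satisfies \ref{Cond:FC}---and then spreading the final answer out from $S$ to an open subscheme of $\Spec\ZZ$ by standard limit methods. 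With these two repairs your argument coincides with the paper's proof.
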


\begin{proof}
  Case \itemref{TI:local-structure:non-noeth1} is precisely \Cref{T:local-structure:excellent}.  For cases \itemref{TI:local-structure:non-noeth2}--\itemref{TI:local-structure:non-noeth3}, after replacing $\stX$ with a quasi-compact open substack we can assume
  that $\stX$ is quasi-compact.  If $\stX_0$ satisfies \ref{Cond:FC}, we can assume that $\stX$ is also \ref{Cond:FC}.  Indeed, let $S$ be the spectrum of $\ZZ$
  localized in the characteristics of $\stX_0$ and replace $\stX$, $\stX_0$ and $\stW_0$ with their base changes along $S\to \Spec \ZZ$.  Once the theorem is established in this case, we can use standard limit methods to replace $S$ with an open subscheme of $\Spec \ZZ$. If instead $\stX_0$ satisfies \ref{Cond:PC} or
  \ref{Cond:N}, we let $S=\Spec \ZZ$.

  By \cite{rydh_noetherian-general}, we can write $\stX_0\inj \stX$ as a limit
  of finitely presented closed immersions $\stX_{0,\lambda}\inj \stX$ with
  transition maps that are closed immersions. For sufficiently large $\lambda$,
  we can extend $f_0\colon \stW_0\to \stX_0$ to a map $f_{0,\lambda}\colon
  \stW_{0,\lambda}\to \stX_{0,\lambda}$ of finite presentation.
  For sufficiently large $\lambda$, we have that $f_{0,\lambda}$ is
  smooth/\'etale/syntomic. If $\stX_0$ has the resolution property,
  then so does $\stX_{0,\lambda}$ for sufficiently large $\lambda$. This follows
  from the Totaro--Gross characterization of the resolution property as having a
  quasi-affine morphism to $B\GL_N$ for some $N$~\cite{totaro_resolution-property,gross_tensor-generators} and
  \cite[Thm.~C]{rydh_noetherian-approx}.
  For sufficiently large $\lambda$, we also have that $\stW_{0,\lambda}$ is
  linearly fundamental~\cite[Thm.~15.3]{alper-hall-rydh_etale-local-stacks}
  using that either $\stW_0$ is \ref{Cond:PC} or \ref{Cond:N}, or $\stX$ is
  \ref{Cond:FC}.  After replacing $\stW_0$ and $\stX_0$ with $\stW_{0,\lambda}$
  and $\stX_{0,\lambda}$ we may thus assume that $\stX_0\inj \stX$ is of finite
  presentation.

  Using \cite{rydh_noetherian-general}, we may further write $\stX\to S$ as a
  limit of algebraic stacks $\stX_{\lambda}\to S$ of finite presentation.
  For sufficiently large $\lambda$, we can descend the finitely presented
  maps $f_0\colon \stW_0\to \stX_0$ and $i\colon \stX_0 \inj \stX$ to finitely
  presented maps $f_{0,\lambda}\colon \stW_{0,\lambda}\to \stX_{0,\lambda}$
  and $i_\lambda\colon \stX_{0,\lambda}\inj \stX_{\lambda}$. For sufficiently
  large $\lambda$, we have that $\stX_\lambda$ has affine
  stabilizers~\cite[Thm.~2.8]{hall-rydh_alg-groups-classifying} and,
  as before, that $f_{0,\lambda}$ is
  smooth/\'etale/syntomic, that $i_\lambda$ is a closed immersion, that
  $\stW_{0,\lambda}$ is linearly fundamental and that $\stX_{0,\lambda}$ has
  the resolution property.
  We are now in the situation of \Cref{T:local-structure:excellent}.
\end{proof}

\subsection{Compact generation}

We can now prove \Cref{T:compact-generation-derived-category}
on the compact generation of algebraic stacks in positive
characteristic.
\begin{proof}[Proof of \Cref{T:compact-generation-derived-category}]
  The implications
  \itemref{TI:compact-generation-derived-category:crisp}$\implies$\itemref{TI:compact-generation-derived-category:support}$\implies$\itemref{TI:compact-generation-derived-category:generated}
  and
  \itemref{TI:compact-generation-derived-category:points}$\implies$\itemref{TI:compact-generation-derived-category:closed-points}
  are trivial. The implication
  \itemref{TI:compact-generation-derived-category:generated}$\implies$\itemref{TI:compact-generation-derived-category:closed-points}
  is \cite[Thm.\ 1.1]{hall-neeman-rydh_no-compacts} since every closed point
  has positive characteristic. It remains to
  prove that
  \itemref{TI:compact-generation-derived-category:closed-points} implies
  \itemref{TI:compact-generation-derived-category:crisp}
  and \itemref{TI:compact-generation-derived-category:points}. To this end, let
  $x$ be a closed point of $\stX$, which we view as morphism
  $x\colon \Spec l \to \stX$, where $l$ is an algebraically closed
  field. Let $i_x \colon \mathcal{G}_x \inj \stX$ be the closed immersion
  of the residual gerbe of $x$. Then there
  is a field $\kappa(x)$ such that $\mathcal{G}_x \to \Spec \kappa(x)$
  is a coarse moduli space. Certainly, $\kappa(x) \subseteq l$. After
  taking a finite extension $\kappa(x) \subseteq k \subseteq l$,
  $(\mathcal{G}_x)_k \simeq BH$, for some group scheme $H$ over
  $k$. After passing to an additional finite extension of $k$, there
  is a subgroup scheme $H' \inj H$ such that
  $H'_l \simeq G^0_{\mathrm{red}}$. By assumption,
  $G^0_{\mathrm{red}}$ is a torus, so $H'$ is of multiplicative
  type. Set $\stW_0^x = BH'$, $\stX_0^x = \mathcal{G}_x$ and let
  $f_0^x \colon \stW_0^x \to \stX_0^x$ be the induced morphism. We
  claim that $f_0^x$ is syntomic. Indeed, $f_0^x$ is the composition
  $BH' \to BH \to \mathcal{G}_x$. Now $BH \to \mathcal{G}_x$ is the
  base change of $\Spec k \to \Spec \kappa(x)$, which is
  syntomic. Also, $BH' \to BH$ is fppf-locally the morphism
  $H/H' \to \Spec k$. Since $H \to \Spec k$ is syntomic
  (\Cref{L:syntomic-groups}) and $H \to H/H'$ is fppf,
  $H/H' \to \Spec k$ is syntomic. By descent, $BH' \to BH$ is syntomic
  and so $f_0^x$ is too.

  We now apply
  \Cref{T:local-structure:non-noeth}\itemref{TI:local-structure:non-noeth2}\itemref{TI:local-structure:non-noeth:syntomic}
  to $f_0^x$: this results in a syntomic morphism
  $f^x\colon \stW^x \to \stX$ such that $\stW^x$ is fundamental and
  $f^x|_{\stX_0^x} \simeq f_0^x$. Since $\stW^x$ is fundamental and
  $f_0^x$ is finite, we may shrink $\stW^x$ so that $f^x$ is
  quasi-finite \cite[Lem.\
  3.1]{alper-hall-rydh_luna-stacks}. Additionally, since $\stX$ has
  affine diagonal, we may further shrink $\stW^x$ so that $f^x$ is
  affine \cite[Prop.~12.5]{alper-hall-rydh_etale-local-stacks}. By
  \cite[Prop.\ 13.4]{alper-hall-rydh_etale-local-stacks}, after
  passing to a strictly \'etale neighborhood of $\stW_0^x$, we may further
  shrink $\stW^x$ so that it is nicely fundamental.

  Since $\stX$ is quasi-compact and the $f^x$ are all open morphisms,
  there is a finite set of closed points $x_1$, $\dots$, $x_m$ of $\stX$
  such that the induced morphism
  $f\colon \stW = \amalg_{i=1}^m \stW^{x_i} \xrightarrow{\amalg
    f^{x_i}} \stX$ is affine, quasi-finite, syntomic, and faithfully
  flat. But $\stW$ is nicely fundamental, so it is $\aleph_0$-crisp
  \cite[Ex.\ 8.6]{hall-rydh_perfect-complexes}. By \cite[Thm.\
  C]{hall-rydh_perfect-complexes}, $\stW$ is $\aleph_0$-crisp. This
  proves
  \itemref{TI:compact-generation-derived-category:closed-points}$\implies$\itemref{TI:compact-generation-derived-category:crisp}.
  Since $\stW\to \stX$ is quasi-finite and surjective and the reduced
  identity components of the stabilizers of $\stW$ are tori, so are those
  of $\stX$. This proves
  \itemref{TI:compact-generation-derived-category:closed-points}$\implies$\itemref{TI:compact-generation-derived-category:points}.  
\end{proof}
We include the following result below for lack of reference.
\begin{lemma}\label{L:syntomic-groups}
  Let $S$ be an algebraic space. If $G \to S$ is a group algebraic
  space that is flat and locally of finite presentation, then it is
  syntomic.
\end{lemma}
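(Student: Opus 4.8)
The plan is to use that a morphism is syntomic precisely when it is flat, locally of finite presentation, and has fibres that are local complete intersections. Since the first two conditions are among the hypotheses, it remains to check that every geometric fibre of $G\to S$ is an lci scheme; that is, we may assume $S=\Spec k$ with $k$ algebraically closed and must show that a group algebraic space $G$ locally of finite type over $k$ is lci over $k$. First I would invoke the standard fact that such a $G$ is automatically a scheme (it has a dense open subscheme, and its translates by the $k$-points cover it, since $k=\bar k$). Since being lci is local on the source and, by translation by $k$-points, every local ring of $G$ is isomorphic to a local ring of the identity component $G^0$, we may further reduce to the case $G=G^0$ connected, hence of finite type over $k$.

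In characteristic $0$ we are done by Cartier's theorem: $G$ is smooth over $k$. In characteristic $p>0$ the plan is to peel off the infinitesimal part using Frobenius. Let $G_{(n)}\subseteq G$ denote the $n$th Frobenius kernel, an infinitesimal normal subgroup scheme, with quotient $G/G_{(n)}$ isomorphic to the image of the $n$th power of the relative Frobenius; for $n\gg0$ this quotient is smooth over $k$. Granting this, $G\to G/G_{(n)}$ is a $G_{(n)}$-torsor, so fppf-locally on $G/G_{(n)}$ it is a base change of $G_{(n)}\to\Spec k$; as lci is fppf-local on the target, $G\to G/G_{(n)}$ is lci as soon as $G_{(n)}\to\Spec k$ is, and composing with the smooth --- in particular lci --- morphism $G/G_{(n)}\to\Spec k$ shows $G\to\Spec k$ is lci. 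So everything reduces to treating an infinitesimal group scheme $Q$ over $k$.

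For the infinitesimal case I would embed $Q$ as a closed subgroup of $\GL_{N,k}$ (for instance via its faithful action on $\oh(Q)$) and set $Y:=\GL_N/Q$, an affine scheme. Because $\oh(Y)=\oh(\GL_N)^{Q}$ is a subring of the domain $\oh(\GL_N)$, the scheme $Y$ is reduced, and since $\GL_N$ acts transitively on $Y$ --- so its dense open smooth locus is all of $Y$ --- $Y$ is smooth over $k$. Thus $\GL_N$ and $Y$ are both smooth, hence lci, over $k$, and $\pi\colon\GL_N\to Y$ is flat and of finite presentation (a $Q$-torsor). A flat morphism of finite presentation between two schemes lci over a common base is itself lci: the transitivity triangle $\pi^{*}\LL_{Y/k}\to\LL_{\GL_N/k}\to\LL_{\GL_N/Y}$ forces $\LL_{\GL_N/Y}$ to be perfect of Tor-amplitude $[-1,0]$. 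Hence $\pi$ is lci, so its geometric fibres are lci; but each geometric fibre of $\pi$ is a translate of $Q$ inside $\GL_N$, hence isomorphic to a base change of $Q$, so $Q$ becomes lci after a field extension and therefore $Q\to\Spec k$ is lci.

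The main obstacle is the one genuinely non-formal input in the characteristic-$p$ step: that $G/G_{(n)}$ --- equivalently the image of a high power of Frobenius --- is smooth over $k$ for $n\gg0$. This is part of the structure theory of group schemes of finite type over a perfect field (it realises $G_{\mathrm{red}}$ as such a quotient), and is where one leaves purely homological manipulations behind. Alternatively, one can bypass the entire characteristic-$p$ analysis by quoting directly that a group scheme locally of finite type over a field is a local complete intersection, which is a standard fact about such groups (cf.\ [SGA3]); the only remaining content is then the reduction to fibres, the passage from group algebraic spaces to group schemes, and keeping track that ``lci'' descends along the fppf base changes used above.
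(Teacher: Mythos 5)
Your proposal is correct, and it reaches the same endgame as the paper --- embed a subgroup into $\GL_N$, observe that $\GL_N/G$ is smooth and that the flat, finitely presented torsor $\GL_N\to\GL_N/G$ between smooth $k$-schemes is lci, and read off the fibre --- but the intermediate reduction in positive characteristic is genuinely different. The paper never touches Frobenius kernels: after reducing to $G$ connected of finite type over $k=\bar k$ and disposing of characteristic $0$ by Cartier, it invokes Brion's extension $1\to G_{\mathrm{ant}}\to G\to G_{\mathrm{aff}}\to 1$ with $G_{\mathrm{ant}}$ anti-affine (hence smooth) to reduce to the case of an \emph{affine} group scheme, and then runs the $\GL_n$-embedding argument for that affine $G$ directly. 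You instead reduce to an \emph{infinitesimal} group scheme by quotienting by the $n$th Frobenius kernel for $n\gg 0$, which trades Brion's anti-affine structure theory for the SGA3 fact that $G/G_{(n)}$ is smooth for large $n$. Both inputs are standard, so either route is fine; but note that your final $\GL_N$ argument works verbatim for any affine closed subgroup scheme, not just an infinitesimal one, so the entire Frobenius-kernel step is an unnecessary detour --- once you know $G$ is affine (or, as in the paper, reduce to that case) you can embed it in $\GL_N$ immediately. Your cotangent-complex justification that $\pi\colon\GL_N\to Y$ is lci is a correct (and slightly more explicit) version of the paper's one-line assertion, and your parenthetical fallback of simply citing SGA3 for ``group schemes locally of finite type over a field are lci'' is also legitimate; the paper's proof exists precisely because the authors preferred a self-contained argument over that citation.
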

\begin{proof}
  Since $G \to S$ is flat and locally of finite presentation, we
  reduce immediately to the situation where $S$ is the spectrum of an
  algebraically closed field $k$ \cite[Tags \spref{01UF} \&
  \spref{069N}]{stacks-project}. Let $G^0 \subseteq G$ be the
  connected component of the identity, which is a normal, irreducible,
  and quasi-compact flat closed subgroup scheme of $G$ \cite[Tag
  \spref{0B7R}]{stacks-project}. Since $G$ is locally of finite type,
  $G^0 \subseteq G$ is even open and closed. Hence, the quotient
  $G/G^0$ is \'etale. Thus, we may replace $G$ with $G^0$ and assume
  that $G$ is connected and of finite type. If the characteristic of $k$
  is $0$, then $G \to S$ is smooth and we are done (Cartier's Theorem
  \cite[Tag \spref{047N}]{stacks-project}). In general, there is an
  extension of groups
  $1 \to G_{\mathrm{ant}} \to G \to G_{\mathrm{aff}} \to 1$, where
  $G_{\mathrm{ant}}$ is anti-affine (i.e.,
  $\Gamma(G_{\mathrm{ant}},\sO_{G_{\mathrm{ant}}}) \simeq k$) and
  $G_{\mathrm{aff}}$ is affine \cite[(0.2)]{brion_anti-affine}. Then
  $G_{\mathrm{ant}}$ is smooth, so it suffices to prove the claim when
  $G$ is affine. In this case, $G \inj \GL_n$ for some
  $n>0$. Then $\GL_n/G$ is smooth and so the morphism
  $\GL_n \to \GL_n/G$ is syntomic; hence, $G \to \Spec k$ is syntomic.
\end{proof}
\subsection{Local structure of stacks at pro-affine-immersions} 
\label{subsec:local-structure-pro-immersions}
We recall \cite[\S3]{temkin-tyomkin_Prufer}: a morphism of algebraic
stacks $j \colon \stU \to \stX$ is a \emph{pro-open immersion} if
every morphism $\stY \to \stX$ with set-theoretic image contained in
$|j(\stU)|$ factors uniquely through $j$. It is established in
\cite[Prop.\ 3.1.4]{temkin-tyomkin_Prufer} that $j$ is necessarily a
flat monomorphism and
$|j(\stU)| = \cap_{\stV \supseteq |j(\stU)|} \stV$, where the
intersection ranges over all open stacks $\stV \subseteq \stX$
containing $j(\stU)$. If $j$ is quasi-compact, then it is a pro-open
immersion if and only if it is a flat monomorphism \cite[Thm.\
3.2.5]{temkin-tyomkin_Prufer} and then $j$ is quasi-affine
\cite[Prop.\ 1.5 (ii)]{raynaud_sem-samuel}.
If $j$ is quasi-compact, then it is also a
topological embedding \cite[Prop.\ 1.2]{raynaud_sem-samuel} and
if in addition $\stX$ is quasi-compact, then
$|j(\stU)| = \cap_{\stV \supseteq |j(\stU)|} \stV$, where the
intersection ranges over the quasi-compact opens of $\stX$ containing
$j(\stU)$.
\begin{remark}
  Let $j\colon \stU \to \stX$ be a quasi-compact pro-open immersion of
  algebraic stacks. There is a factorization of $j$ as
  $\stU \xrightarrow{j'} \stX' \xrightarrow{g} \stX$, where $j'$ is an
  affine pro-open immersion and $g$ is a quasi-compact open immersion
  \cite[Prop.\ 1.5 (i)]{raynaud_sem-samuel}.
\end{remark}
We introduce the following variant: a morphism of algebraic stacks
$j \colon \stU \to \stX$ is a \emph{pro-affine(-open) immersion} if
$\stU$ represents a cofiltered intersection
$\cap_{\alpha} \stV_\alpha$, where the $\stV_\alpha \subseteq \stX$
are (open) immersions and the transition maps
$\stV_\alpha \to \stV_\beta$, which are automatically (open)
immersions, are eventually affine.
\begin{example}
  An immersion of algebraic stacks is a pro-affine-immersion.
\end{example}
\begin{example}
  If $x \in |\stX|$ is a point of a quasi-separated algebraic stack,
  then the inclusion $\stG_x \to \stX$ of the residual gerbe is a
  pro-affine-immersion \cite[Lem.\ 2.1]{MR3754421}.
\end{example}
\begin{remark}
  A pro-affine-open immersion of algebraic stacks is
  pro-\'etale.
\end{remark}
\begin{remark}
  If $\stX$ is a normal and $\QQ$-factorial noetherian stack, then any
  quasi-compact pro-open immersion $j \colon \stU \to \stX$ is
  pro-affine-open. This follows from the result \cite[Cor.~2.7]{raynaud_sem-samuel}: after
  restricting to an open substack, the complement of $\stU$ is a, possibly
  infinite, union of Cartier divisors and the complements of finite unions
  of these divisors are affine open immersions.
\end{remark}
The following theorem simultaneously generalizes
\Cref{T:local-structure:non-closed} and
\Cref{T:local-structure:non-noeth}. Note that in
\Cref{T:local-structure:non-closed} no extra conditions are needed as
\ref{Cond:FC} always holds for the residual gerbe as it is a one-point space.

\begin{theorem}[Local structure of stacks at
  pro-affine-immersions]\label{T:local-structure:non-noeth:pro-immersion}
Assumptions and conclusions as in \Cref{T:local-structure:non-noeth} \itemref{TI:local-structure:non-noeth2} or \itemref{TI:local-structure:non-noeth3}
except that $\stX_0\inj \stX$ is a pro-affine-immersion.
\end{theorem}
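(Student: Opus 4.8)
The plan is to deduce the statement from the closed-immersion case, \Cref{T:local-structure:non-noeth}, using the elementary observation that an immersion of algebraic stacks is a closed immersion into an open substack; the argument then runs parallel to the proof of \Cref{T:local-structure:non-noeth}. Since each conclusion produces a morphism $f\colon \stW\to\stX$ whose restriction $f|_{\stX_0}$ is $f_0$, and $\stW_0$ is quasi-compact, it suffices to prove the statement after replacing $\stX$ --- and thereby $\stX_0$ --- by any open substack containing the image of $\stW_0$: composing $f$ with the open immersion back into $\stX$ recovers the full conclusion, as $\stW_0$ already lives over the retained part of $\stX_0$. In particular we reduce to $\stX$ quasi-compact, noting that a pro-affine-immersion restricts to a pro-affine-immersion. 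If moreover $\stX_0$ satisfies \ref{Cond:FC}, then, exactly as in the proof of \Cref{T:local-structure:non-noeth}, I would base change along $S\to\Spec\ZZ$ with $S$ the spectrum of $\ZZ$ localized at the characteristics of $\stX_0$, so that $\stX$ itself satisfies \ref{Cond:FC}; once the theorem is proved over such an $S$, standard limit methods descend $S$ to an open subscheme of $\Spec\ZZ$.

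Next, write $\stX_0=\varprojlim_\alpha\stV_\alpha$ with $\stV_\alpha\inj\stX$ immersions and transition maps affine for $\alpha,\beta\geq\alpha_0$. Replacing $\stV_{\alpha_0}$ by a quasi-compact open containing the image of $\stW_0$ and the system accordingly (permissible by the previous paragraph), we may assume each $\stV_\alpha$ is quasi-compact and quasi-separated. Since $f_0$ is smooth (resp.\ \'etale, resp.\ syntomic) and $\stW_0$ is quasi-compact and quasi-separated, $f_0$ is of finite presentation, so standard limit methods produce some $\alpha\geq\alpha_0$ and a finitely presented morphism $f_\alpha\colon\stW_\alpha\to\stV_\alpha$ with $f_0\cong f_\alpha\times_{\stV_\alpha}\stX_0$. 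Enlarging $\alpha$ as needed, I would arrange that $f_\alpha$ is smooth (resp.\ \'etale, resp.\ syntomic), that $\stW_\alpha$ is linearly fundamental \cite[Thm.~15.3]{alper-hall-rydh_etale-local-stacks}, that $\stV_\alpha$ has the resolution property in the syntomic case (by the Totaro--Gross criterion \cite{totaro_resolution-property,gross_tensor-generators} together with \cite[Thm.~C]{rydh_noetherian-approx}), and that the relevant hypothesis persists at the finite stage: $\stV_\alpha$ satisfies \ref{Cond:FC} under \itemref{TI:local-structure:non-noeth3}, while under \itemref{TI:local-structure:non-noeth2} one arranges that $\stW_\alpha$ satisfies \ref{Cond:PC} or \ref{Cond:N} --- or else one descends the whole picture to finite presentation over $\Spec\ZZ$, landing in case \itemref{TI:local-structure:non-noeth1}, which needs no such condition.

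Finally, $\stV_\alpha\inj\stX$ is a closed immersion into some open substack $\stU\subseteq\stX$, and since $|\stX_0|\subseteq|\stV_\alpha|\subseteq|\stU|$ the closed substack $\stX_0$ sits inside $\stV_\alpha$. Applying \Cref{T:local-structure:non-noeth} to the closed immersion $\stV_\alpha\inj\stU$ and the morphism $f_\alpha$ --- whose hypotheses have just been arranged --- yields a smooth (resp.\ \'etale, resp.\ syntomic) morphism $f\colon\stW\to\stU$ with $\stW$ fundamental and $f|_{\stV_\alpha}\cong f_\alpha$. Composing with the open immersion $\stU\inj\stX$, which is \'etale and hence smooth and syntomic, gives $f\colon\stW\to\stX$ of the required type with $\stW$ fundamental, and restricting along $\stX_0\inj\stV_\alpha$ gives $f|_{\stX_0}\cong f_\alpha|_{\stX_0}\cong f_0$. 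When $\stX_0=\stG_x$ is a residual gerbe this recovers \Cref{T:local-structure:non-closed}, as \ref{Cond:FC} holds automatically for the one-point stack $\stG_x$.

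The main obstacle is the descent step in the second paragraph: ensuring that at a single finite stage $\alpha$ all the hypotheses of \Cref{T:local-structure:non-noeth} hold simultaneously --- in particular that $\stW_\alpha$ is linearly fundamental and that one of \ref{Cond:PC}, \ref{Cond:N}, \ref{Cond:FC} continues to hold --- while keeping the cofiltered system $\{\stV_\alpha\}$ quasi-compact. This is the same approximation-theoretic circle of ideas (\cite{rydh_noetherian-general}, \cite[Thm.~15.3]{alper-hall-rydh_etale-local-stacks}, \cite[Thm.~C]{rydh_noetherian-approx}) already used to deduce \Cref{T:local-structure:non-noeth} from \Cref{T:local-structure:excellent}; by contrast, the reduction from the pro-affine-immersion to a closed immersion is purely formal.
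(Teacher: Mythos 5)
Your proposal is correct and follows essentially the same route as the paper: reduce to $\stX$ quasi-compact (with \ref{Cond:FC} arranged for $\stX$ in case \itemref{TI:local-structure:non-noeth3}), arrange the cofiltered system defining the pro-affine-immersion to consist of quasi-compact immersions with affine transitions, descend $f_0$ to a finite stage $f_\beta$ that is smooth/\'etale/syntomic with $\stW_\beta$ linearly fundamental via \cite[Prop.~15.3]{alper-hall-rydh_etale-local-stacks}, and then shrink $\stX$ so that $\stX_\beta$ becomes a closed substack and invoke \Cref{T:local-structure:non-noeth}. Your extra remarks (descending the resolution property, and the fallback of landing in the finitely-presented-over-$\ZZ$ case) are consistent with how the paper handles the closed-immersion case and do not change the argument.
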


\begin{proof}
As a first preliminary step, we can as before assume that $\stX$ is
quasi-compact and, if $\stX_0$ satisfies \ref{Cond:FC},
that $\stX$ satisfies \ref{Cond:FC} by base changing
along $S \to \Spec \ZZ$ where $S$ is the spectrum of $\ZZ$
localized in the characteristics of $\stX_0$. 

By assumption, $\stX_0 = \cap_\lambda \stX_\lambda$ is an intersection
of a cofiltered system of immersions $\stX_\lambda \inj \stX$
with eventually affine inclusions
$\stX_\mu \inj \stX_{\lambda}$. Pick $\alpha$ sufficiently large
such that $\stX_\lambda \inj \stX_{\alpha}$ is affine for all
$\lambda\geq \alpha$ and pick a quasi-compact open neighborhood
$\stU$ of $\stX_0$ in $\stX_\alpha$. Then $\stX_0 = \cap_{\lambda\geq \alpha}
(\stX_\lambda\cap \stU)$ so we may assume that all the $\stX_\lambda$ are
quasi-compact and that all the $\stX_\mu \inj \stX_\lambda$ are affine.

By standard limit methods, the
morphism $f_0 \colon \stW_0 \to \stX_0$ descends to a morphism
$f_\alpha \colon \stW_\alpha \to \stX_\alpha$, which is \'etale, smooth or
syntomic if $f_0$ is so. If $\lambda\geq \alpha$, set
$\stW_{\lambda} = \stW_\alpha \times_{\stX_\alpha} \stX_{\lambda}$. Then
 $\stW_0 = \cap_{\lambda\geq \alpha} \stW_\lambda$. Now either
$\stX$ satisfies \ref{Cond:FC} (by the initial reduction) or
$\stW_0$ satisfies \ref{Cond:PC} or \ref{Cond:N}. Hence, 
$\stW_\beta$ is linearly fundamental for some $\beta \gg \alpha$
\cite[Prop.\ 15.3]{alper-hall-rydh_etale-local-stacks}.
After replacing $\stX$ with an open neighborhood of $\stX_\beta$,
we may assume that $\stX_\beta\inj \stX$ is a closed immersion.
We may now apply \Cref{T:local-structure:non-noeth}
\itemref{TI:local-structure:non-noeth2} or
\itemref{TI:local-structure:non-noeth3} to $f_\beta$ and the result follows.
\end{proof}
We now prove the refinements.
\begin{proof}[Proof of \Cref{T:refinement1}]
  Arguing as in the proof of
  \Cref{T:local-structure:non-noeth:pro-immersion}, we may assume that
  $\stW_0$ satisfies \ref{Cond:PC} or \ref{Cond:N} or $\stW$ satisfies
  \ref{Cond:FC}. We may further assume that there is a factorization
  of $\stW_0 \inj \stW \to \stX$ through an immersion
  $\stW_\beta \inj \stW$ such that $\stW_{\beta}$ is fundamental
  and $\stW_\beta \inj \stW \to \stX$ is
  representable~\cite[Thm.~C]{rydh_noetherian-approx}. We now factor
  $\stW_\beta \inj \stW$ as
  $\stW_\beta \inj \stZ \subseteq \stW$, where
  $\stW_\beta \inj \stZ$ is a closed immersion and
  $\stZ \subseteq \stW$ is an open immersion. In this generality,
  however, $\stZ$ is not necessarily fundamental (it can be arranged
  to be if $\stW_0 \inj \stW$ is a closed immersion,
  however). But we can now apply \Cref{T:local-structure:non-noeth} to
  the closed immersion $\stW_\beta \inj \stZ$. We thus obtain an
  \'etale neighborhood $p \colon \stW' \to \stZ$ of $\stW_\beta$ such
  that $\stW'$ is fundamental and the induced morphism
  $\stW_\beta' = p^{-1}(\stW_\beta) \simeq \stW_\beta \to \stZ
  \subseteq \stW \to \stX$ is representable. The
  result now follows from \cite[Prop.\
  12.5]{alper-hall-rydh_etale-local-stacks} applied to the pair
  $(\stW',\stW_\beta')$ and the morphism $\stW' \to \stX$.
\end{proof}
\begin{proof}[Proof of \Cref{T:refinement2}]
  As in the proof of \Cref{T:refinement2}, we may assume that there is a
  factorization of $\stW_0 \inj \stW$ through an immersion
  $\stW_\beta \inj \stW$ such that $\stW_{\beta}$ is linearly or
  nicely fundamental, respectively. Likewise, if $\stW_0=[\Spec A_0/G_0]$,
  then we can arrange so that $\stW_\beta=[\Spec A_\beta/G_\beta]$
  with $G_\beta$ linearly reductive or nice.

  We have a closed then open factorization $\stW_\beta \inj \stZ \subseteq
  \stW$. Apply \Cref{T:local-structure:non-noeth} to $\stW_\beta\inj \stZ$ to
  replace $\stW$ with an \'etale neighborhood of $\stW_\beta$ that is
  fundamental.  We can now apply \cite[Props.~16.11, 16.12 and
  16.14]{alper-hall-rydh_etale-local-stacks} and the result follows.
\end{proof}

\subsection{Nisnevich neighborhoods}
\begin{proof}[Proof of \Cref{T:nisnevich-neighborhoods}]
  We apply \Cref{T:local-structure:non-closed} to \emph{every} point
  of $|\stX|$: for each $x\in |\stX|$ we obtain an \'etale morphism
  $f_x \colon \stW_x \to \stX$ such that $f_x|_{\stG_x}$ is an
  isomorphism and $\stW_x$ is fundamental. If $\stX$ has affine (resp.\ separated) diagonal, then \Cref{T:refinement1} says that we can arrange that $f_x$ is affine (resp.\ representable). By \Cref{T:refinement2}, we may further assume
  that $\stW_x$ is nicely fundamental. Set
  $\stW = \amalg_{x\in |\stX|} \stW_x$ and take
  $f=\amalg_{x} f_x \colon \stW \to \stX$; then $f$ is a
  quasi-separated Nisnevich covering. By \cite[Prop.\ 3.3]{MR3754421},
  we may shrink $\stW$ so that it is quasi-compact (a monomorphic
  splitting sequence must factor through finitely many of the
  $\stW_x$), remains nicely fundamental and $f$ is a Nisnevich
  covering. 
\end{proof}
\begin{proof}[Proof of \Cref{T:nisnevich-neighborhoods-LR}]
  We apply \Cref{T:local-structure:non-noeth} to every closed point of
  $|\stX|$: for each closed point $x$ of $\stX$ we obtain an \'etale
  morphism $g \colon \stW \to \stX$ such
  that $\stW=[U/\GL_n]$ is fundamental and $g|_{\stG_x}$ is an
  isomorphism. If $\stX$ has affine (resp.\ separated diagonal), then
  \Cref{T:refinement1} says that we can arrange that $g$ is
  affine (resp.\ representable).

  For an integer $d\geq 1$, let $\stW^d$ be
  the $d$th fiber product of $g$; then the symmetric group $S_d$ acts
  on $\stW^d$ by permuting the factors. Let $e$ be the maximum rank of
  a fiber of $g$. Then there is an induced Nisnevich covering
  $f \colon \coprod_{1 \leq d \leq e} [\stW^d/S_d] \to \stX$ since
  $g$ is representable.

  Let $V^d$ be the $d$th fiber product of $U\to \stW\to \stX$.  Then
  $\stW^d=[V^d/(\GL_n)^d]$. Let $P$ be one of the properties: separated,
  quasi-affine, affine. If the diagonal of $\stX$ has property $P$, then
  the algebraic space $V^d$ has property $P$. Since the Stiefel manifold
  $\GL_{dn}/(\GL_n)^d$ is affine, it follows that $\stW^d=[V'/\GL_{nd}]$ for an
  algebraic space $V'$ with property $P$.

  Let $p\colon \stW^d\to [\stW^d/S_d]$. Let $\sE$ be the vector bundle on
  $\stW^d$ with frame bundle $V'$. Then we claim that the frame bundle $V$ of
  $p_*\sE$ is an algebraic space with property $P$. Indeed, $V$ is an algebraic
  space since the stabilizers of $[\stW^d/S_d]$ act faithfully on $p_*\sE$,
  cf.\ \cite[Lem.~2.13]{EHKV}. Since $p^*V\to V$ is finite, \'etale and
  surjective, it is enough to prove that $p^*V$ has property $P$.
  But since $p$ is finite \'etale, we have that $p^*p_*\sE\to \sE$ is split
  surjective and it follows that $p^*V$ has property $P$ by considering Stiefel
  manifolds again.  We have thus shown that $[\stW^d/S_d]=[V/\GL_N]$ for an
  algebraic space $V$ with property $P$.

  When $\stX$ has affine diagonal, then $\stW^d\to \stX$ is affine but
  $[\stW^d/S_d]\to \stX$ is merely separated.  Let $\SEC^d(\stW/\stX)\subseteq
  \stW^d$ be the open and closed substack that is the complement of all
  diagonals.  Then $S_d$ acts freely on $\SEC^d(\stW/\stX)$ relative to $\stX$
  and $\ET^d(\stW/\stX):=[\SEC^d(\stW/\stX)/S_d]\to \stX$ is affine and an
  \'etale neighborhood of any point of $\stX$ at which $g$ has rank $d$.  Thus
  $f\colon \coprod_{1 \leq d \leq e} \ET^d(\stW/\stX) \to \stX$ is a
  fundamental Nisnevich covering with $f$ affine.
\end{proof}

\subsection{Existence of henselizations}

\begin{proof}[Proof of \Cref{T:henselization}]
    By \Cref{T:local-structure:non-noeth:pro-immersion}, there exists an \'etale neighborhood $\stW \to \stX$ of $\stW_0:=\stX_0$ such that $\stW$ is fundamental.  Let $\stW_0 \to W_0$ and $\stW \to W$  be the good and adequate moduli spaces.  We claim that the henselization $W^h$ of $W$ along $W_0$
    exists and is affine. If $\stW_0$ is a closed substack, then this follows from \cite[Ch.~XI, Thm.~2]{raynaud_hensel_rings} as $W$ is affine. If $\stW_0 = \stG_x$ is the residual gerbe of a point $x \in |\stX|$, then $W_0 = \Spec \kappa(x) \inj W$ is the inclusion of a point $w$ and $W^h = \Spec \oh_{W,w}^h$.  In this case, we also note that $\stW_0$ satisfies \ref{Cond:FC}. Let $\stW^h = \stW \times_W W^h$.  Since $W^h \to W$ is flat, $\stW^h \to W^h$ is an adequate moduli space.  By \cite[Thm.~3.6]{alper-hall-rydh_etale-local-stacks},  $(\stW^h, \stW_0)$ is a henselian pair and by \cite[Thm.~13.7]{alper-hall-rydh_etale-local-stacks}, $\stW^h$ is linearly fundamental since the closed points of $\stW^h$ have linearly reductive stabilizer.   
  To show that $\stW^h \to \stX$ is the henselization of $\stX$ along $\nu \co \stX_0 \inj \stX$, 
  it is enough to prove that any quasi-separated \'etale neighborhood $g \co \stW'\to \stW^h$ of $\stW_0$ has a
  section. This is precisely the conclusion of \cite[Prop.~16.4]{alper-hall-rydh_etale-local-stacks}.  
\end{proof}

\end{section}

%%%%%%%%%%%%%%%%%%%%%%%%%%%%%%%%%%%%%%%%%%%%%%%%%%%%%%%%%%%%%%%%%%%%%%%%%
\begin{section}{Local structure of derived algebraic stacks}\label{S:local-structure-derived}
In this section we give a derived version of the local structure theorem.

An algebraic derived $1$-stack is the derived analogue of an algebraic stack: it is a sheaf of $\infty$-groupoids on the opposite of the $\infty$-category of simplicial commutative rings (with its \'etale topology) that admits a surjective morphism, represented by smooth derived algebraic spaces, from a disjoint union of derived affine schemes.

Let $\stX$ be an algebraic derived $1$-stack. We say that $\stX$ is \emph{fundamental} if there exists an affine morphism $\stX\to B\GL_n$; that is, if $\stX=[\Spec A/\GL_n]$ for some derived affine scheme $A$. We say that $\stX$ is \emph{linearly fundamental} if it is fundamental and cohomologically affine, that is, $R\Gamma(\stX,-)$ is $t$-exact.

\begin{proposition}[Derived effectivity theorem]\label{P:derived-effectivity}
Let $\stX_0\inj \stX_1\inj \stX_2\inj \dots$ be a sequence of derived
thickenings, i.e., $\stX_m \cong \tau_{\leq m} \stX_n$ for every $m\leq n$. If
$\stX_0$ is linearly fundamental, then there is a linearly fundamental algebraic derived $1$-stack $\stX$ and a compatible sequence of equivalences $\tau_{\leq n} \stX \cong \stX_n$.
\end{proposition}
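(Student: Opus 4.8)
The plan is to push the whole tower through $B\GL_N$ and then take a limit of rings. Since $\stX_0$ is linearly fundamental it is in particular fundamental, so I would fix an affine morphism $g_0\co \stX_0\to B\GL_{N,\ZZ}$ for some $N$. Because $(\stX_n)_{\cl}=\tau_{\leq 0}\stX_n=\stX_0$ for every $n$, each Postnikov stage $\stX_n\to \stX_{n-1}=\tau_{\leq n-1}\stX_n$ is a square-zero extension of derived stacks whose ideal is $\pi_n(\sO_{\stX_n})[n]$, a quasi-coherent sheaf pulled back from $\stX_0$.

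First I would extend $g_0$ up the tower by induction on $n$. Given an affine $g_{n-1}\co \stX_{n-1}\to B\GL_N$ extending $g_0$, standard derived deformation theory identifies the obstruction to extending $g_{n-1}$ across $\stX_{n-1}\inj \stX_n$ with a class in $\Ext^1_{\sO_{\stX_0}}\bigl(g_0^*\LL_{B\GL_N},\pi_n\sO_{\stX_n}[n]\bigr)$ (the computation may be carried out on $\stX_0$ since $g_0^*\LL_{B\GL_N}$ is perfect and the ideal is a sheaf on $\stX_0$), and, when this class vanishes, the set of extensions is a torsor under the analogous $\Ext^0$-group. Now $g_0^*\LL_{B\GL_N}$ is a vector bundle on $\stX_0$ placed in cohomological degree $1$, so these two groups are $\mathrm{H}^{n+2}$ and $\mathrm{H}^{n+1}$ of a quasi-coherent sheaf on $\stX_0$; since a linearly fundamental stack is cohomologically affine with affine diagonal, hence of cohomological dimension zero, both vanish for $n\geq 1$. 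Thus $g_0$ extends to an essentially unique compatible tower $g_n\co \stX_n\to B\GL_N$, and each $g_n$ is affine because affineness of a morphism of derived stacks is detected on classical truncations and $(g_n)_{\cl}=g_0$.

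Next, writing $\Spec B_n:=\stX_n\times_{B\GL_N}\Spec\ZZ$, the $\GL_N$-torsor $\Spec B_n\to \stX_n$ presents $\stX_n$ as $[\Spec B_n/\GL_N]$ for a connective $\GL_N$-equivariant simplicial commutative ring $B_n$; flatness of this torsor shows that $\stX_{n-1}=\tau_{\leq n-1}\stX_n$ pulls back to $\Spec\tau_{\leq n-1}B_n$, so the transition maps give $\GL_N$-equivariant identifications $\tau_{\leq m}B_n\simeq B_m$. I would then set $B:=\varprojlim_n B_n$ --- a connective $\GL_N$-equivariant simplicial commutative ring with $\tau_{\leq n}B\simeq B_n$ compatibly in $n$, since for each $i$ the pro-system $\{\pi_i B_n\}_n$ is eventually constant --- and define $\stX:=[\Spec B/\GL_N]$. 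This is an algebraic derived $1$-stack carrying an affine morphism to $B\GL_N$, hence fundamental; because $\GL_N$ is classical and flat over $\ZZ$, truncation commutes with the quotient, so $\tau_{\leq n}\stX\simeq[\Spec\tau_{\leq n}B/\GL_N]\simeq\stX_n$, the equivalences being compatible in $n$ by construction. Finally $\stX_{\cl}=\tau_{\leq 0}\stX\simeq\stX_0$ is cohomologically affine, and cohomological affineness of a fundamental derived stack is inherited from its classical truncation --- the higher cohomology of a sheaf in the heart of $\QCoh(\stX)$ is computed on $\stX_{\cl}$, and $R\Gamma$ of a connective complex is connective by d\'evissage along its Postnikov tower --- so $\stX$ is linearly fundamental.

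The main obstacle is the inductive extension of $g_0$ up the derived tower: both the vanishing of the obstruction and the uniqueness of the lift at each stage rest on the cohomological vanishing supplied by linear fundamentality of $\stX_0$, and one must additionally check that the stagewise spaces of lifts, each contractible, assemble into a coherent system over all $n$ (a mild homotopy-limit argument). The remaining bookkeeping --- that $\tau_{\leq n}$ commutes with $[-/\GL_N]$, and that cohomological affineness descends from $\stX_{\cl}$ --- is routine given the derived foundations and can be cited.
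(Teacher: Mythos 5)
Your argument is correct and its two main ingredients coincide with the paper's: the obstruction-theoretic lifting of the affine morphism $\stX_0\to B\GL_N$ up the Postnikov tower (with the obstruction and torsor groups landing in $\mathrm{H}^{n+2}$ and $\mathrm{H}^{n+1}$ of a quasi-coherent sheaf on $\stX_0$, killed by cohomological dimension~$0$), and the deduction of cohomological affineness of $\stX$ from that of $\stX_0$ via the identification of hearts. Where you genuinely diverge is in how $\stX$ itself is produced. The paper first invokes Lurie's general effectivity theorem for Postnikov towers of algebraic derived stacks \cite[Prop.~5.4.6]{Lurie-Thesis} to get $\stX$ with $\tau_{\leq n}\stX\cong\stX_n$, and only afterwards lifts the map to $B\GL_r$ to see that $\stX$ is fundamental. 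You instead lift to $B\GL_N$ first and then build $\stX=[\Spec(\varprojlim_n B_n)/\GL_N]$ by hand from the $\GL_N$-equivariant rings $B_n=\sO(\stX_n\times_{B\GL_N}\Spec\ZZ)$, using that the pro-systems $\{\pi_iB_n\}_n$ stabilize. This buys you independence from the general effectivity result at the cost of some equivariance bookkeeping: you must make sense of homotopy-coherent $\GL_N$-actions on simplicial commutative rings, of the limit in that equivariant category, and of the commutation of $\tau_{\leq n}$ with $[-/\GL_N]$ (all true, by flatness of $\Spec\ZZ\to B\GL_N$, but not free). One small inaccuracy: the stagewise space of lifts across $\stX_{n-1}\inj\stX_n$ is not contractible --- it is a torsor under a mapping space whose $\pi_{n+1}$ is $\mathrm{H}^0(\stX_0,V^\vee\otimes\pi_n\sO_{\stX_n})$, which need not vanish --- but it is nonempty and connected, which is all the inductive construction of the tower requires; since the proposition asserts existence rather than uniqueness of the compatible family, this does not affect the proof.
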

\begin{proof}
The existence and uniqueness of an algebraic $1$-stack with compatible isomorphisms $\tau_{\leq n} \stX \cong \stX_n$ is given by \cite[Prop.~5.4.6]{Lurie-Thesis}. $\stX$ is determined by the equivalence $\stX(A) \cong \stX_n(A)$ for $n$-truncated simplicial commutative rings, and $\stX(A) \cong \varprojlim_n \stX(\tau_{\leq n} A)$ in general.

Let $f_0\colon \stX_0\to B\GL_r$ be an affine morphism. The obstruction to
lifting a morphism $f_n\colon \stX_n\to B\GL_r$ to $f_{n+1}\colon \stX_{n+1}\to
B\GL_r$ lies in
\[
\Ext^1_{\stX_0}\bigl(f_0^*\LL_{B\GL_r},\pi_{n+1}(\sO_{\stX_{n+1}})[n+1]\bigr).
\]
This obstruction group
vanishes since $\stX_0$ is linearly fundamental. We can thus find a compatible
sequence of morphisms $f_n\colon \stX_n\to B\GL_r$. Since $f_0$ is affine,
so is $f_n$ for every $n$. The compatible family of morphisms $f_n : \stX_n \to B\GL_r$ defines a morphism $f : \stX \to B\GL_r$, and the resulting morphism is affine because it is affine on every truncation.

Finally, because pushforward along the inclusion $\stX_0 \hookrightarrow \stX$ is $t$-exact and identifies $\QCoh(\stX)^\heart \cong \QCoh(\stX_0)^\heart$, $\stX$ is cohomologically affine if and only if $R\Gamma(\stX_0,-)$ has cohomological dimension $0$ on $\QCoh(\stX_0)^\heart$. Since $\stX_0$ has affine diagonal, this is the same as being cohomologically affine.
\end{proof}

\begin{proof}[Proof of \Cref{T:local-structure:derived}]
If $\stX_0$ satisfies \ref{Cond:FC}, then let $S$ be the spectrum of $\ZZ$ localized in
the characteristics of $\stX_0$ and base change everything along $S\to \Spec
\ZZ$. At the very end, we can then replace $S$ by an open quasi-compact
subscheme of $\ZZ$.

First assume that $f_0$ is smooth. Then $\stW_0\times_{\stX_0} (\stX_0)_\cl$ is
classical and we may apply the classical version of the local structure theorem
(\Cref{T:local-structure:non-noeth}). This gives us a fundamental
classical stack $\stW_\cl$ and a smooth morphism $f_\cl \colon \stW_\cl\to
\stX_\cl$.  Since either \ref{Cond:PC}/\ref{Cond:N} holds for $\stW_0$ or \ref{Cond:FC} for $\stW_\cl$, we may
assume that $\stW_\cl$ is linearly fundamental (\Cref{T:refinement2}).
We may now deform $f_{\leq 0}:=f_\cl$ to smooth maps $f_{\leq n} \colon \stW_{\leq n}\to \tau_{\leq n}\stX$ for every $n$. Indeed, the obstruction lies in
\[\Ext^2_{\stW_\cl}(\LL_{f_\cl},f_\cl^*\pi_n(\sO_\stX)[n]),\] 
which vanishes as $\stW_\cl$ is cohomologically affine and $f_\cl$ is smooth.
By \Cref{P:derived-effectivity}, there is a linearly fundamental derived $1$-stack $\stW$ with compatible isomorphisms $\stW_{\leq n} \cong \tau_{\leq n}\stW$. Because both $\stW$ and $\stX$ are nilcomplete \cite[Prop.~5.3.7]{Lurie-Thesis}, the smooth morphisms $\stW_{\leq n} \to \tau_{\leq n} \stX$ extend uniquely to a smooth morphism $f\colon \stW\to \stX$. Since
\[\Ext^1_{(\stW_0)_\cl}(\LL_{(f_0)_\cl},(f_0)_\cl^*\pi_n(\sO_{\stX_0})[n])=0,\]
the isomorphism $\stW_0\times_{\stX_0} (\stX_0)_\cl\to \stW\times_{\stX}
(\stX_0)_\cl$ extends to an isomorphism $\stW_0\to \stW\times_{\stX} \stX_0$
over $\stX_0$.

When instead $f_0$ is quasi-smooth, we proceed as in the syntomic case of the
classical version of the local structure
theorem, see the proof of \Cref{T:local-structure:excellent}\itemref{TI:local-structure:excellent2}.

\textbf{Step 1:} First replace $\stX$ with $\stX\times B\GL_n$ so that
$\stW_0\to \stX_0$ becomes affine.

\textbf{Step 2:} Consider the morphism of classical stacks
$(\stW_0)_\cl\to (\stX_0)_\cl$ and pick a factorization
$(\stW_0)_\cl\to \stY_0\to (\stX_0)_\cl$ where the first map is a closed
immersion and the second map is affine and smooth. Here we use that
$(\stX_0)_\cl$ has the resolution property.
Then apply the classical \'etale
version of the structure theorem to $(\stW_0)_\cl=(\stW_0)_\cl\inj \stY_0$.
We can thus replace $\stY_0$ with an \'etale neighborhood of $(\stW_0)_\cl$
and assume that $\stY_0$ is linearly fundamental.

\textbf{Step 3:} Apply the smooth case of the derived local structure theorem to
$\stY_0\to (\stX_0)_\cl\inj \stX$ and we obtain a smooth map $\stY\to \stX$.
Since $\stY\to \stX$ is smooth and $\stW_0$ is linearly fundamental,
the obstructions to lifting the closed immersion
$(\stW_0)_\cl\inj \stY$ to closed immersions $\tau_{\leq n}(\stW_0)\inj \stY$
over $\stX$ for every $n$ vanish. We obtain a closed immersion
$\stW_0\inj \stY$
because both $\stW_0$ and $\stY$ are nilcomplete \cite[Prop.~5.3.7]{Lurie-Thesis}.
Since either \ref{Cond:PC}/\ref{Cond:N} holds for $\stW_0$ or \ref{Cond:FC}
for $\stY$, we may assume that $\stY$ is linearly fundamental
(\Cref{T:refinement2}).

\textbf{Step 4:} Now let $\stY_0=\stY\times_{\stX} \stX_0$ (previously it denoted its
classical truncation). The morphism $\stW_0\to \stY_0$ is a quasi-smooth closed
immersion. Let $\sN=\pi_1(\LL_{\stW_0/\stY_0})$ denote the corresponding conormal
bundle on $(\stW_0)_\cl$. After replacing $\stY$ with an \'etale neighborhood
of $\stW_0$ \cite[Prop.~16.12]{alper-hall-rydh_etale-local-stacks}, we may
assume that $\sN$ extends to a vector bundle $\sE$ on $\stY$.

Let $F$ denote the homotopy fiber of $\sO_{\stY_0}\to \sO_{\stW_0}$. Since the
Hurewicz map $F\otimes_{\sO_{\stY_0}} \sO_{\stW_0}\to \LL_{\stW_0/\stY_0}[-1]$ is
an isomorphism on $\pi_0$, we have an induced isomorphism
$\sE|_{(\stW_0)_\cl}\simeq \sN\simeq F|_{(\stW_0)_\cl}$.
Since $\stY_0$ is cohomologically affine, this lifts to a
map $\sE|_{\stY_0}\to F$. The
composition $s_0\colon \sE|_{\stY_0}\to F\to \sO_{\stY_0}$ corresponds to a
section $s_0^\vee$ of $\sE^\vee|_{\stY_0}$ and the derived zero-locus of this
section $\stZ_0:=\{s_0^\vee=0\}\inj \stY_0$ defines a quasi-smooth closed
immersion. Here the derived zero-locus is the pull-back fitting in the
cartesian square
\[
\xymatrix{%
\stZ_0\ar[r]\ar[d] & \stY_0\ar[d]^{s_0^\vee} \\
\stY_0\ar[r]^-0 & \VV(\sE|_{\stY_0}).\ar@{}[ul]|\square
}
\]
The map $s_0\colon \sE|_{\stY_0}\to F$ corresponds to a $2$-commutative
diagram
\[
\xymatrix{%
\stW_0\ar[r]\ar[d]\drtwocell<\omit>{} & \stY_0\ar[d]^{s_0^\vee} \\
\stY_0\ar[r]^-0 & \VV(\sE|_{\stY_0}).
}
\]
and hence to a map $\stW_0\to \stZ_0$. By construction, we have that
$\LL_{\stW_0/\stZ_0}=0$ so the closed immersion $\stW_0\to \stZ_0$ is
also an open immersion. After replacing $\stY$ with an open neighborhood
of $\stW_0$, we can thus assume that $\stW_0=\stZ_0$.

Finally, we may lift the section $s_0^\vee$ of $\sE^\vee|_{\stY_0}$ to a section
$s^\vee$ of $\sE^\vee$ since $\stY$ is cohomologically affine. The derived
zero-locus $\stW:=\{s^\vee=0\}\inj \stY$ is a quasi-smooth closed immersion
restricting to $\stW_0\inj \stY_0$ and the composition $f\colon \stW\inj
\stY\to \stX$ is a quasi-smooth morphism such that $f|_{\stX_0} \simeq f_0$.
\end{proof}

\end{section}

%%%%%%%%%%%%%%%%%%%%%%%%%%%%%%%%%%%%%%%%%%%%%%%%%%%%%%%%%%%%%%%%%%%%%%%%%

\appendix
\begin{section}{Non-existence of Zariskification}\label{A:Zariskification}
In this section, we show that the Zariskification, in contrast to the
henselization, does not exist in general. This counter-example was mentioned
in~\cite[3.1.2]{temkin-tyomkin_Prufer}.

Let $X$ be a scheme and $Z\inj X$ be a closed subscheme. The generization of
$Z$ is the subset of $X$ consisting of all points $x\in |X|$ such that
$\overline{\{x\}}\cap Z\neq \emptyset$. A \emph{Zariskification} of $X$ along
$Z$ is a flat quasi-compact monomorphism $W\to X$ such that the image is the
generization of $Z$. The Zariskification is unique up to isomorphism since if
$W$ and $W'$ are two monomorphisms as above, then $W\times_X W'\to W$ and
$W\times_X W'\to W'$ are faithfully flat quasi-compact monomorphisms, hence
isomorphisms.

If $X=\Spec A$ is an affine scheme and $Z=\Spec(A/I)$ is a closed subscheme,
then the Zariskification exists and equals $W=\Spec\bigl( (1+I)^{-1}A\bigr)$
\cite[\S2]{raynaud_hensel_rings}.
If $Z=\{x_1,x_2,\dots,x_n\}$ is a finite set of points, then the Zariskification
is the semi-localization at $Z$.

In the following example we show that the Zariskification at two points of
Hironaka's non-projective proper smooth threefold does not exist.

\begin{example}[Non-existence of Zariskification]\label{EX:Zariskification-1}
%% From email to Michael and Ilya on 2011-01-13:
Let $X$ be a projective threefold and $c,d$ curves as in \cite[p.~443]{hartshorne}. Let $X'$ be the non-projective proper threefold given by gluing the different blow-ups and let $l_0$, $m_0$, $l'_0$ and $m'_0$ be curves on $X'$ as in \loccit\ and let $P'=l_0 \cap m_0$ and $Q'=l'_0\cap m'_0$.

There is no affine neighborhood containing both $P'$ and $Q'$. We claim that the
generization $E$ of $P'$ and $Q'$ is not pro-open (i.e., not represented by a flat
quasi-compact monomorphism).
For this, we can use Raynaud's criterion for locally factorial schemes~\cite[Cor.~2.7]{raynaud_sem-samuel}. Hence, it is enough to show that there is a point $x'$ not in $E$ such that every divisor containing $x'$ intersects $E$ (i.e., intersects $P'$ or $Q'$).
But since $l_0+m'_0$ is numerically trivial, every divisor that intersects $l_0$ properly contains $m'_0$. In particular, every divisor intersecting $l_0$ contains either $P'$ or $Q'$.
Raynaud's criterion is thus not satisfied for a point $x'$ on $l_0$ (not equal to $P'$).
\end{example}

\begin{example}[Algebraic space without Zariski-localization at a point]
For a suitable choice of $X$ and curves $c,d$, one can endow Hironaka's proper
threefold $X'$ with a free action of $G=\ZZ/2\ZZ$ that interchanges $P'$ and $Q'$.
The quotient $X'/G$ is then not a scheme since the image of $\{P',Q'\}$ is a
point $z$ that does not admit an affine neighborhood. Moreover, the
Zariskification at $z$ does not exist. Indeed, if there is a flat
monomorphism $W\to X'/G$ of algebraic spaces with image the generization of
$z$, then it pulls-back to a flat monomorphism $W'\to X'$ with
image the generization of $P'$ and $Q'$. Since $W'$ is a scheme (see
\spcite{0B8A} or \cite[Thm.~3.1.5]{temkin-tyomkin_Prufer}), this contradicts
\Cref{EX:Zariskification-1}.
\end{example}

\end{section}

% -------------------------------------------------------------------

\bibliography{etale-neighborhoods}
\bibliographystyle{dary}

\end{document}